\documentclass[11pt,a4paper]{article}
\usepackage[margin=2.5cm]{geometry}
\usepackage{amsmath,amssymb,amsthm}
\usepackage{enumitem}
\usepackage{mathrsfs}
\usepackage{url}

\newtheorem{theorem}{Theorem}[section]
\newtheorem{lemma}[theorem]{Lemma}
\newtheorem{proposition}[theorem]{Proposition}
\newtheorem{corollary}[theorem]{Corollary}
\newtheorem{remark}[theorem]{Remark}
\newtheorem{example}[theorem]{Example}

\newcommand{\h}{\mathfrak{h}}

\title{A uniform decomposition theorem for invariant differential operators on imprimitive complex reflection groups $G(r,p,n)$}
\author{Jean Kabor\'e$^1$, Ibrahim Nonkan\'e$^2$\\[4pt]
\small $^1$ Laboratoire de Sciences et Technologies (LaST),
Universit\'e Thomas Sankara, Burkina Faso\\
\small $^2$ D\'epart\'ement d'\'economie et de math\'ematiques appliqu\'ees, IUFIC,
Universit\'e Thomas Sankara, Burkina Faso\\[4pt]
\small \texttt{kaborejean775075@gmail.com}$^1$, \texttt{ibrahim.nonkane@uts.bf}$^2$}
\date{}

\begin{document}
\maketitle

\begin{abstract}
{We study the module structure of the polynomial ring,
localized at the discriminant, over the ring of differential
operators on the ring of invariants of the imprimitive complex
reflection group $G(r,p,n)$, describing its simple components with
explicit generators given by the higher Specht polynomials of
Ariki--Terasoma--Yamada. The proof rests on a Jacobian lemma computing
the discriminant of $G(r,p,n)$, combined with a double-centralizer
argument. As particular cases ($r=2$, $p=2$ or $p=1$) we recover, and
considerably shorten, the known decomposition theorems for the real
reflection groups $W(D_n)$ and $W(B_n)$; we also treat $G(r,r,n)$ and
$G(r,1,n)$ explicitly, with worked examples ($D_2$, $D_3$, $B_2$) and
their central idempotents. Finally, applying the Galois descent equivalence of categories of
Nonkan\'e to $G(r,p,n)$ for the first time
gives a second, generator-free description of
the simple summands as twisted invariants.}
\end{abstract}

\noindent\textbf{2020 Mathematics Subject Classification.}
Primary 20F55, 13A50; Secondary 16S32, 20C15, 05E10.

\noindent\textbf{Keywords.} Complex reflection groups, imprimitive
reflection groups, invariant theory, Weyl algebra, ring of invariant
differential operators, higher Specht polynomials, Young tableaux,
decomposition theorem, Galois descent, Coxeter groups of type $B$
and $D$.

\section{Introduction}\label{sec:intro}

The interplay between the invariant theory of a finite reflection
group $W$ acting on a polynomial ring
$\mathcal{O}_X = \mathbb{C}[x_1,\dots,x_n]$ and the representation
theory of $W$ itself is one of the oldest and most productive themes
in algebra, going back to the classical Chevalley--Shephard--Todd
theorem and its companion fact on coinvariant algebras: the invariant
subring $\mathcal{O}_X^W$ is a polynomial ring, and $\mathcal{O}_X$,
viewed as a module over it, is free of rank $|W|$, with fiber
isomorphic to the regular representation of $W$. When $W$ is a
\emph{complex} reflection group -- in particular one of the
imprimitive groups $G(r,p,n)$ in the Shephard--Todd classification --
a finer combinatorial model for this regular-representation structure
is furnished by the \emph{higher Specht polynomials} of Ariki,
Terasoma, and Yamada~\cite{ATY1997}, later extended to the full
family $G(r,p,n)$ by Morita and Yamada~\cite{MoritaYamada1998}.

A parallel and largely independent line of work studies
$\mathcal{O}_X$ not merely as a $\mathbb{C}[W]$-module, but as a
module over $\widetilde{\mathcal{D}}_Y$, the ring of differential
operators on the invariant subring $\mathcal{O}_Y = \mathcal{O}_X^W$
after localizing at the discriminant $\Delta$; this ring plays the
role of an invariant differential operator ring for $W$. Write
$\widetilde{\mathcal{O}}_X$ for the corresponding localization of
$\mathcal{O}_X$ at $\Delta$. Understanding the resulting
$\widetilde{\mathcal{D}}_Y$-module structure of $\widetilde{\mathcal{O}}_X$
-- its simple constituents, their multiplicities, and explicit
generators -- was carried out for the symmetric group $W = \mathcal{S}_n$
in~\cite{Nonkane2019}, extended to certain products of symmetric
groups in~\cite{NonkaneTodjihounde2023}, to the wreath product
$G(r,n) = \mathbb{Z}/r\mathbb{Z} \wr \mathcal{S}_n$ (that is, $G(r,1,n)$
in the notation below) in~\cite{NonkaneLawson2022}, and to the real
reflection group $W(D_n)$ in~\cite{NonkaneLawson2021} -- our own
starting point for the present generalization -- in all cases via
an equivalence of categories of Galois-descent type. The present paper
completes this picture by treating the \emph{entire} family of
imprimitive complex reflection groups $G(r,p,n)$ at once, rather than
case by case.

\paragraph{Main results.} Our first main result
(Theorem~\ref{thm:main-general}, Proposition~\ref{prop:decomposition-general})
identifies, for every $G(r,p,n)$, the simple
$\widetilde{\mathcal{D}}_Y$-submodules of $\widetilde{\mathcal{O}}_X$
and their multiplicities, with explicit generators given by the
higher Specht polynomials. The proof combines two ingredients that are
new to this generality: a single \emph{Jacobian lemma}
(Lemma~\ref{lem:jacobian-general}) computing the discriminant of
$G(r,p,n)$ as the determinant of the change of variables between
$\widetilde{\mathcal{O}}_X$ and $\widetilde{\mathcal{O}}_Y$, and a
self-contained \emph{double-centralizer argument}
(Lemma~\ref{lem:double-centralizer}) replacing the ad hoc, group-specific
constructions used previously. Because the argument is uniform in
$(r,p,n)$, specializing it to the two families of \emph{real}
reflection groups in this classification -- $W(D_n) = G(2,2,n)$ and
$W(B_n) = G(2,1,n)$ -- yields their decomposition theorems as
two-line corollaries (Corollaries~\ref{cor:Dn-main}
and~\ref{cor:Bn}), considerably shortening what a direct,
group-by-group treatment would require. We also work out the
remaining structural cases $G(r,r,n)$ and $G(r,1,n)$ explicitly, and
illustrate all of this on the smallest nontrivial examples
($D_2$, $D_3$, $B_2=G(2,1,2)$), including their central and primitive
idempotents.\footnote{Every explicit symbolic computation in these
examples -- the Jacobian lemma on a sample of $(n,r,p)$, the
generators and characters for $D_2$ and $D_3$, the central idempotents
for $D_2$, $D_3$, and $G(r,1,2)$, and the reflection counts for
$G(r,r,n)$ and $W(B_n)$ -- is reproduced and checked independently in
an accompanying SymPy script, publicly available at
\url{PLACEHOLDER-REPOSITORY-URL} (archived with a permanent DOI at
\url{PLACEHOLDER-DOI}).}

Our second main result (Subsection~\ref{subsec:galois-descent}) applies
the Galois-descent equivalence of categories of
Nonkan\'e~\cite[\S 2.4]{Nonkane2019}
(originally applied to the symmetric group in~\cite{Nonkane2019}, and
recalled and applied to products of symmetric groups
in~\cite{NonkaneTodjihounde2023}) to $W = G(r,p,n)$ for
the first time, and derives from it two new consequences: a second,
generator-free construction of the simple summands of
$\widetilde{\mathcal{O}}_X$ as ``twisted invariants''
(Corollary~\ref{cor:twisted-invariants}), and a categorical criterion
for recognizing when an abstract $\widetilde{\mathcal{D}}_Y$-module
occurs as a direct summand of $\widetilde{\mathcal{O}}_X$
(Corollary~\ref{cor:characterization}), without needing an explicit
higher Specht polynomial generator.

\paragraph{Structure of the paper.} Section~\ref{sec:prelim} fixes
notation for reflections and discriminants, and recalls the
combinatorics of higher Specht polynomials for $G(r,p,n)$. Section~\ref{sec:general}
contains our main results: the Jacobian lemma, the double-centralizer
argument, the decomposition theorem, and the Galois-descent
equivalence of categories. Section~\ref{sec:Dn} specializes to the
real reflection group $W(D_n)$, with the examples $D_2$ and $D_3$
worked out in full detail. Section~\ref{sec:other-cases} treats the
remaining cases $G(r,r,n)$, $W(B_n) = G(2,1,n)$, and $G(r,1,n)$,
again with explicit low-rank examples.

\section{Preliminaries}\label{sec:prelim}

\subsection{Reflections and the discriminant}\label{subsec:notation}

We first fix notation used throughout. Let $\h$ be a finite-dimensional
vector space. We say that a semisimple element $s \in GL(\h)$ is a
\emph{reflection} (or \emph{complex reflection}, when we wish to
stress the contrast with the real reflections defined next) if
$\mathrm{rank}(1-s)=1$. For the purposes of the next definition only,
assume in addition that
$\h$ carries a non-degenerate inner product $(\cdot,\cdot)$ and is
defined over $\mathbb{R}$. We say that a semisimple element
$s \in GL(\h)$ is a \emph{real reflection} if $\mathrm{rank}(1-s)=1$
and $s$ is an involution, i.e.\ $s^2 = \mathrm{id}$.\footnote{\label{fn:real}This
distinguishes a general (complex) reflection, which may have any
finite order, from a real reflection, of order exactly $2$, acting on
a real inner product space -- consistent with the terminology
``real reflection group'' used below for $W(D_n)$, as opposed to the
complex reflection groups $G(r,p,n)$ of Subsection~\ref{subsec:specht},
whose reflections have order $r$ in general.}

Let $s \in GL(\h)$ be a complex reflection. Denote by
$\alpha_s \in \h^*$ an eigenvector of $s$ in $\h^*$ with nontrivial
eigenvalue. For $W$ a reflection group with set of reflections
$S \subset W$, the \emph{polynomial discriminant} of $W$ is
$\Delta = \prod_{s \in S} \alpha_s(x)$ (well defined up to a nonzero
scalar, since each $\alpha_s$ is). This is the object whose
localization underlies the whole of Section~\ref{sec:general}: for a
general complex reflection group $G(r,p,n)$, $\Delta$ is computed
explicitly in Lemma~\ref{lem:jacobian-general} below, and specialized
to $W(D_n) = G(2,2,n)$ in Corollary~\ref{cor:Dn-setup}.

\subsection{Higher Specht Polynomials for the reflection group $G(r,p,n)$}\label{subsec:specht}

For a general complex reflection group $G(r,p,n)$ -- not assumed real
-- the $\widetilde{\mathcal{D}}_Y$-module structure of
$\widetilde{\mathcal{O}}_X$ studied in this paper is described using
the combinatorial machinery of \emph{higher Specht polynomials}, which
we now recall (see~\cite{MoritaYamada1998}); it underlies the whole of
Section~\ref{sec:general}. The group $G(r,p,n)$, where
$r,p,n \ge 1$ and $p$ divides $r$, consists of the monomial matrices
whose nonzero entries are of the form $\xi^j$ ($0 \le j < r$) and such
that the $d$-th power of the product of all nonzero entries is equal
to $1$, where $\xi$ denotes a primitive $r$-th root of unity and
$d = r/p$. The imprimitive complex reflection group $G(r,p,n)$ is
identified as a normal subgroup of the wreath product
\[
  G(r,n) = (\mathbb{Z}/r\mathbb{Z})^n \rtimes \mathcal{S}_n
    = \{ (\xi^{i_1},\dots,\xi^{i_n};\sigma) \mid i_k \in \mathbb{N},\ \sigma \in \mathcal{S}_n \}
  \;=\; G(r,1,n)
\]
(the case $p=1$, where the defining condition above is vacuous), where
$\mathcal{S}_n$ is the group of permutations of
$\{x_1,\dots,x_n\}$, whose product is given by
\[
  (\xi^{i_1},\dots,\xi^{i_n};\sigma)(\xi^{j_1},\dots,\xi^{j_n};\pi)
  = (\xi^{i_1+j_{\sigma^{-1}(1)}},\dots,\xi^{i_n+j_{\sigma^{-1}(n)}};\sigma\pi).
\]

Let $\mathcal{O}_X = \mathbb{C}[x_1,\dots,x_n]$ be the ring of
polynomials in $n$ indeterminates, on which $G(r,n)$ acts by
\[
  (\xi^{i_1},\dots,\xi^{i_n};\sigma) f
  = f(\xi^{i_{\sigma(1)}}x_{\sigma(1)},\dots,\xi^{i_{\sigma(n)}}x_{\sigma(n)}).
\]
It is well known that the fundamental invariants under this action
are given by the elementary symmetric functions
$e_j(x_1^r,\dots,x_n^r)$, $1 \le j \le n$ \cite{ATY1997}. Let $J'_+$
be the ideal of
$\mathcal{O}_X$ generated by these fundamental invariants and
$\Lambda' = \mathcal{O}_X/J'_+$ the quotient ring. It is also known
that the $G(r,n)$-module $\Lambda'$ is isomorphic to the group ring
$\mathbb{C}[G(r,n)]$, namely the left regular representation
\cite{ATY1997}. A
description of all irreducible components of $\Lambda'$ is known in
\cite{ATY1997}, in terms of what is called ``higher Specht
polynomials''. The irreducible representations of $G(r,n)$ are
parametrized by $r$-tuples of Young diagrams $(\lambda^1,\dots,\lambda^r)$
with $|\lambda^1|+\dots+|\lambda^r|=n$.

Now consider the restriction of the above action of $G(r,n)$ on
$\mathcal{O}_X$ to the subgroup $G(r,p,n)$. The fundamental invariants
are $e_j(x_1^r,\dots,x_n^r)$, $1 \le j \le n-1$, and
$e_n(x_1^d,\dots,x_n^d)$. Denote by $J_+$ the ideal generated by these
polynomials and let $\Lambda_0 = \mathcal{O}_X/J_+$. The representation
of $G(r,p,n)$ on $\Lambda_0$ is again isomorphic to the left regular
representation $\mathbb{C}[G(r,p,n)]$ \cite{MoritaYamada1998}.

\subsubsection{Higher Specht polynomials for $G(r,p,n)$}\label{subsubsec:shift}

Let $\mathcal{P}_{r,n}$ be the set of $r$-tuples of Young diagrams
$\lambda = (\lambda^1,\dots,\lambda^r)$ with
$|\lambda^1|+\dots+|\lambda^r|=n$. By filling each cell with a
positive integer so that every $j$ ($1 \le j \le n$) occurs once, we
obtain an $r$-tableau $T=(T^1,\dots,T^r)$ of shape $\lambda$. When the
number $k$ occurs in the component $T^i$, we write $k \in T^i$. The
set of $r$-tableaux of shape $\lambda$ is denoted $\mathrm{Tab}(\lambda)$.
An $r$-tableau $T=(T^1,\dots,T^r)$ is \emph{standard} if the numbers
are increasing along each column and each row of $T^\nu$
($1 \le \nu \le r$). The set of $r$-standard tableaux of shape
$\lambda$ is denoted $\mathrm{STab}(\lambda)$.

Let $S=(S^1,\dots,S^r) \in \mathrm{STab}(\lambda)$. We associate a
word $w(S)$ as follows: first we read each column of the component
$S^1$ from bottom to top, starting from the left; we continue this
procedure for $S^2$, and so on. For the word $w(S)$ we define the
index $i(w(S))$ inductively: the number $1$ has index $i(1)=0$; if the
number $k$ has index $i(k)=q$ and the number $k+1$ sits on the left
(resp.\ right) of $k$, then $k+1$ has index $q+1$ (resp.\ $q$).
Assigning the indices to the corresponding cells yields a shape
$\lambda$, each cell filled with a nonnegative integer, denoted
$i(S) = (i(S)^1,\dots,i(S)^r)$.

Let $T=(T^1,\dots,T^r)$ be an $r$-tableau of shape $\lambda$. For each
component $T^\nu$ ($1 \le \nu \le r$), the Young symmetrizer
$e_{T^\nu}$ of $T^\nu$ is defined by
\[
  e_{T^\nu} = \frac{1}{\alpha_{T^\nu}}
    \sum_{\sigma \in R(T^\nu)} \sum_{\tau \in C(T^\nu)} \mathrm{sgn}(\tau)\,\tau\sigma,
\]
where $\alpha_{T^\nu}$ is the product of the hook lengths of the shape
$\lambda^\nu$, and $R(T^\nu)$, $C(T^\nu)$ are the row- and
column-stabilizers of $T^\nu$, respectively.

We may regard a tableau $T$ on a Young diagram $\lambda$ as a map
$T : \{\text{cells of } \lambda\} \to \mathbb{Z}_{\ge 0}$ assigning to
a cell $\xi$ the value $T(\xi)$. For $S \in \mathrm{STab}(\lambda)$
and $T \in \mathrm{Tab}(\lambda)$, Ariki, Terasoma and Yamada
\cite{ATY1997} defined the higher Specht polynomial for $G(r,n)$ by
\[
  F_{S,T} = \prod_{\nu=1}^r \left( e_{T^\nu}\big(x^{r\, i(S)^\nu}_{T^\nu}\big)
    \prod_{k \in T^\nu} x_k^\nu \right),
  \qquad
  x^{r\, i(S)^\nu}_{T^\nu} = \prod_{\xi \in \lambda^\nu} x^{\,r\, i(S)^\nu(\xi)}_{T^\nu(\xi)}.
\]

Define the shift operator $\mathrm{Sh}$ on $\mathcal{P}_{r,n}$ (resp.\
on $\mathrm{Tab}(\lambda)$) by
\[
  \mathrm{Sh}(\lambda^1,\dots,\lambda^r) = (\lambda^r,\lambda^1,\dots,\lambda^{r-1})
  \qquad
  \big(\text{resp. } \mathrm{Sh}(T^1,\dots,T^r) = (T^r,T^1,\dots,T^{r-1})\big).
\]
For $\lambda,\mu \in \mathcal{P}_{r,n}$, define $\lambda \sim \mu$ if
$\mu = \mathrm{Sh}^{dj}(\lambda)$ for some $j=0,\dots,p-1$; this is an
equivalence relation on $\mathcal{P}_{r,n}$. Let
$b(\lambda) := |\{\mu \in \mathcal{P}_{r,n} \mid \lambda \sim \mu\}|$
and $e(\lambda) := p/b(\lambda)$. For $h=1,\dots,r$, let
\[
  \mathrm{STab}(\lambda)_h = \{ T=(T^1,\dots,T^r) \in \mathrm{STab}(\lambda) \mid
    1 \in T^\nu,\ 1 \le \nu \le h \}.
\]
For $\lambda \in \mathcal{P}_{r,n}$, fix $S \in \mathrm{STab}(\lambda)$
and $l=0,1,\dots,e(\lambda)-1$; for each $T \in \mathrm{STab}(\lambda)$
we define
\[
  F^l_{S,T} = \sum_{m=0}^{e(\lambda)-1} \xi^{lm\,d\,b(\lambda)}\,
    F_{S,\,\mathrm{Sh}^{mb(\lambda)}(T)}
  \;\in\; \Lambda_0.
\]
The family
\[
  \{ F^l_{S,T} \in \Lambda_0 \mid S \in \mathrm{STab}(\lambda)_d,\
     T \in \mathrm{STab}(\lambda)_{db(\lambda)},\
     l=0,1,\dots,e(\lambda)-1 \}
\]
is called the family of higher Specht polynomials for the complex
reflection group $G(r,p,n)$.

\begin{theorem}[{\cite{MoritaYamada1998}}]\label{thm:2.3}
Let $\lambda \in \mathcal{P}_{r,n}$, and for each
$S \in \mathrm{STab}(\lambda)_d$ and $0 \le l \le e(\lambda)-1$, put
\[
  V^l_S := \bigoplus_{T \in \mathrm{STab}(\lambda)_{db(\lambda)}} \mathbb{C} F^l_{S,T}
  \subset \Lambda_0.
\]
\begin{enumerate}[label=(\roman*)]
  \item The space $V^l_S(\lambda)$ affords an irreducible
    representation of the reflection group $G(r,p,n)$.
  \item We have the irreducible decomposition, as a representation of
    $G(r,p,n)$,
    \[
      \mathbb{C}[G(r,p,n)] = \bigoplus_{\lambda} \bigoplus_{S \in \mathrm{STab}(\lambda)_d}
        \bigoplus_{l=0}^{e(\lambda)-1} V^l_S(\lambda),
    \]
    where $\lambda$ runs over a system of representatives of
    $\mathcal{P}_{r,n}/\!\sim$.
  \item The higher Specht polynomials in
    \[
      \mathcal{F} = \{ F^l_{S,T} \mid S \in \mathrm{STab}(\lambda)_d,\
        T \in \mathrm{STab}(\lambda)_{db(\lambda)},\ \lambda \in \mathcal{P}_{r,n},\
        l=0,1,\dots,e(\lambda)-1 \}
    \]
    form a basis of the $\mathbb{C}[x_1,\dots,x_n]^{G(r,p,n)}$-module
    $\mathbb{C}[x_1,\dots,x_n]$.
\end{enumerate}
\end{theorem}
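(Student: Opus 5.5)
The plan is to deduce the statement from the Ariki--Terasoma--Yamada results for the wreath product $G(r,n)=G(r,1,n)$, using Clifford theory for the normal subgroup $G(r,p,n)$ of index $p$. The quotient $G(r,n)/G(r,p,n)\cong \mathbb{Z}/p\mathbb{Z}$ is generated by the image of $t=(\xi,1,\dots,1;\mathrm{id})$. The one-dimensional characters of $G(r,n)$ trivial on $G(r,p,n)$ are the powers of $\delta:(\xi^{i_1},\dots,\xi^{i_n};\sigma)\mapsto \xi^{d(i_1+\dots+i_n)}$.

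\textbf{Step 1 (twisting).} I first identify the irreducible $G(r,n)$-module $V(\lambda)$, spanned by the $F_{S,T}$ with $T\in\mathrm{STab}(\lambda)$ and $S$ fixed, and check that $V(\lambda)\otimes\delta\cong V(\mathrm{Sh}^{d}\lambda)$. This is read off from the factor $\prod_{k\in T^\nu}x_k^\nu$. Multiplying by $\delta$ shifts the exponent class $\nu$ by $d$ modulo $r$, while the symmetrizer part, a polynomial in the $x^r$, is unaffected.

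\textbf{Step 2 (restriction).} Clifford theory then says the following. First, $\mathrm{Res}\,V(\lambda)\cong\mathrm{Res}\,V(\mu)$ exactly when $\lambda\sim\mu$. Second, the stabilizer of $\lambda$ under $\langle\delta\rangle$ has order $e(\lambda)=p/b(\lambda)$. Third, $\mathrm{Res}\,V(\lambda)$ splits into $e(\lambda)$ pairwise non-isomorphic irreducibles.

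To realize these pieces concretely, I use the operator $\mathrm{Sh}^{b(\lambda)}$ on tableaux. It intertwines $V(\lambda)$ with $V(\lambda)\otimes\delta^{b(\lambda)}$, and its $e(\lambda)$-th power is a scalar. Hence its eigenspaces, with eigenvalues $\xi^{l\,d\,b(\lambda)}$, are $G(r,p,n)$-stable. The averaged sums $F^l_{S,T}$ are precisely the projections onto these eigenspaces.

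Restricting $T$ to $\mathrm{STab}(\lambda)_{db(\lambda)}$ chooses one representative from each $\mathrm{Sh}^{b(\lambda)}$-orbit, located by the position of the letter $1$. A dimension count then gives $\dim V^l_S=|\mathrm{STab}(\lambda)|/e(\lambda)$, matching the Clifford dimension. This yields (i).

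\textbf{Step 3 (regular representation, part (ii)).} The regular representation of $G(r,p,n)$ contains each irreducible with multiplicity equal to its dimension. Part (ii) follows once I show that, as $S$ ranges over $\mathrm{STab}(\lambda)_d$, the restrictions account for the correct multiplicity $|\mathrm{STab}(\lambda)|/e(\lambda)$. The constraint $1\in T^\nu$ with $\nu\le d$ selects one representative among the $p$ equivalent twists $\mathrm{Sh}^{dj}$.

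Along the way I verify that everything lies in $\Lambda_0$ and that the spaces $V^l_S$ are independent. For independence I use the identification $\Lambda_0\cong\mathbb{C}[G(r,p,n)]$, and compare total dimension $\sum_\lambda |\mathrm{STab}(\lambda)_d|\cdot|\mathrm{STab}(\lambda)_{db(\lambda)}|\cdot e(\lambda)$ with $|G(r,p,n)|=r^n n!/p$.

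\textbf{Step 4 (basis, part (iii)).} The invariant ring $\mathbb{C}[x]^{G(r,p,n)}$ is polynomial, so by Chevalley $\mathbb{C}[x]$ is free over it. By the graded Nakayama lemma, homogeneous lifts of any basis of $\Lambda_0$ form a free basis. So (iii) follows once (ii) shows that the images of $\mathcal{F}$ form a basis of $\Lambda_0$.

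\textbf{Main obstacle.} I expect the hardest part to be the linear independence and spanning of the $F^l_{S,T}$ in $\Lambda_0$, as opposed to $\mathcal{O}_X/J'_+$. The ideal $J_+$ is smaller, since $e_n(x^d)$ replaces $e_n(x^r)$, so the ATY independence does not transfer directly. I would first try to show that $\Lambda_0$ decomposes as $\bigoplus_{j=0}^{p-1}e_n(x^d)^j\cdot\Lambda'$-type pieces, and that multiplication by $e_n(x^d)$ realizes the shift $\mathrm{Sh}^d$ on the $F_{S,T}$ modulo $J_+$. From this I would deduce independence by comparing graded characters with the known Hilbert series $\prod_i(1-q^{d_i})/(1-q)^n$.
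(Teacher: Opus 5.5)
There is a genuine gap. Your architecture (ATY for $G(r,n)$, Clifford theory for the index-$p$ subgroup, graded Nakayama) is reasonable, but the step carrying the actual content is missing. Nothing in Steps 1--3 shows that the $F^l_{S,T}$ are linearly independent, or even nonzero, in $\Lambda_0$. So even (i) is only established for the span in $\mathcal{O}_X$, whose image modulo $J_+$ could a priori vanish. (The paper gives no argument of its own here; it quotes Morita--Yamada.)

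\emph{The route you sketch for this is pointed the wrong way.} With $y_n=(x_1\cdots x_n)^d$ one has $y_n^p=e_n(x_1^r,\dots,x_n^r)$, so $J'_+\subseteq J_+=J'_++(y_n)$: the ideal $J_+$ is larger, not smaller. Hence $\Lambda_0=\Lambda'/y_n\Lambda'$ is the \emph{smaller} algebra, with $\dim\Lambda'=p\dim\Lambda_0$. Moreover $y_n$ acts by zero on $\Lambda_0$, so ``multiplication by $e_n(x^d)$ realizes $\mathrm{Sh}^d$ modulo $J_+$'' cannot be the mechanism.

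\emph{What you need is an identity of polynomials.} If the letter $1$ lies in no component $S^\nu$ with $\nu>r-d$, then $y_nF_{S,T}=F_{\mathrm{Sh}^dS,\,\mathrm{Sh}^dT}$, with $S$ and $T$ shifted simultaneously. The reasons are as follows.
\begin{itemize}
\item[(a)] $y_n$ is symmetric, so it passes through the Young symmetrizers.
\item[(b)] On components $\nu\le r-d$ it just moves exponent class $\nu$ to $\nu+d$.
\item[(c)] On the wrapped components $\nu>r-d$ it leaves an extra factor $x_k^r$ for each $k\in T^\nu$.
\item[(d)] That factor is absorbed because $i(\mathrm{Sh}^dS)=i(S)+1$ on wrapped cells and $i(\mathrm{Sh}^dS)=i(S)$ elsewhere. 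Moving the wrapped block to the front of the reading word flips the left/right relation exactly for consecutive letters $k,k+1$ of which one is wrapped. Since $1$ is unwrapped, the net correction is $+1$ exactly on wrapped letters.
\end{itemize}
Iterate this, noting that the component containing $1$ determines the power $j$. Let $H$ be the span of $\{F_{S,T}: S\in\mathrm{STab}(\mu)_d,\ T\in\mathrm{STab}(\mu),\ \mu\in\mathcal{P}_{r,n}\}$. Then the ATY basis of $\Lambda'$ is the disjoint union over $0\le j<p$ of $y_n^j$ times this set. Hence $\Lambda'=\bigoplus_{j<p}y_n^jH$ and $y_n\Lambda'=\bigoplus_{1\le j<p}y_n^jH$, so $H\cong\Lambda_0$. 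The $F^l_{S,T}$ are an invertible discrete Fourier transform of this basis along the free $\langle\mathrm{Sh}^{d\,b(\mu)}\rangle$-orbits of $T$. This gives (iii) via your Nakayama step, and the nonvanishing needed in (i).

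\emph{The same lemma shows your multiplicity count in Step 3 is wrong.} For fixed $\lambda$, the twists $\mathrm{Sh}^{dj}S$ have shape $\mathrm{Sh}^{dj}\lambda$. So the condition ``$1$ in one of the first $d$ components'' selects one element per $\langle\mathrm{Sh}^d\rangle$-orbit only in $\bigsqcup_{\mu\sim\lambda}\mathrm{STab}(\mu)$, not in $\mathrm{STab}(\lambda)$. In fact $|\mathrm{STab}(\lambda)_d|=\frac{|\lambda^1|+\cdots+|\lambda^d|}{n}\,|\mathrm{STab}(\lambda)|$, which equals $|\mathrm{STab}(\lambda)|/e(\lambda)$ only in special cases such as $b(\lambda)=1$.

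For $D_3=G(2,2,3)$ and $\lambda=((2),(1))$ you get $2$, and $1$ for $\mathrm{Sh}\,\lambda=((1),(2))$. The $3$-dimensional irreducible must occur $3$ times. Taking one representative per class gives total dimension $18\neq 24$. So (ii) can only hold with $\lambda$ running over all of $\mathcal{P}_{r,n}$, as in the index set of $\mathcal{F}$ in (iii). The spaces $V^l_S(\mu)$ with $\mu\sim\lambda$ supply the remaining copies of the same $e(\lambda)$ irreducibles, by Clifford theory. The dimension comparison you yourself propose would have exposed this.

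Two smaller points.
\begin{itemize}
\item[(1)] The operator must be $\mathrm{Sh}^{d\,b(\lambda)}$, not $\mathrm{Sh}^{b(\lambda)}$. Only the former preserves $\lambda$ and has order $e(\lambda)$ when $d>1$, matching the eigenvalues $\xi^{l\,d\,b(\lambda)}$.
\item[(2)] Its well-definedness as a linear map on $V_S(\lambda)$ needs justification. Since $\prod_\nu e_{T^\nu}$ is unchanged when the components of $T$ are relabelled, both $\{F_{S,T}\}_T$ and $\{F_{S,\mathrm{Sh}^{d\,b(\lambda)}T}\}_T$ are images of the left ideal $\mathbb{C}[\mathcal{S}_n]\prod_\nu e_{T_0^\nu}$ under injective $\mathcal{S}_n$-maps. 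Hence they satisfy the same straightening relations.
\end{itemize}
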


\section{Decomposition theorem for $G(r,p,n)$}\label{sec:general}

Let $W = G(r,p,n)$, $p \mid r$, $d := r/p$. Recall from
Subsection~\ref{subsec:specht} that $G(r,p,n)$ consists of
monomial matrices whose nonzero entries are $r$-th roots of unity
$\xi^{j}$, $0 \le j < r$, such that the $d$-th power of the product of
the nonzero entries equals $1$. Its reflections are of two kinds:

\begin{itemize}[leftmargin=1.8em]
 \item \emph{non-diagonal reflections}: for $1 \le i < j \le n$ and
 $0 \le k < r$, the map sending $x_i \mapsto \xi^{k} x_j$,
 $x_j \mapsto \xi^{-k} x_i$ and fixing the other coordinates;
 \item \emph{diagonal reflections} (present only when $d > 1$, i.e.\
 $p < r$): for $1 \le i \le n$ and $1 \le j < d$, the map
 $x_i \mapsto \xi^{jp} x_i$, fixing the other coordinates.
\end{itemize}

Recall $\Delta = \prod_{s \in S} \alpha_s(x)$ (Subsection~\ref{subsec:notation}).
Grouping this product by pair $\{i,j\}$ and by coordinate $i$: for
fixed $i<j$, the $r$ non-diagonal reflections ($k=0,\dots,r-1$)
contribute the linear forms $x_i - \xi^k x_j$, whose product is the
classical factorization
$\prod_{k=0}^{r-1}(x_i - \xi^k x_j) = x_i^r - x_j^r$; and for fixed
$i$, the $d-1$ diagonal reflections ($j=1,\dots,d-1$) each contribute
a factor proportional to $x_i$, together giving $x_i^{\,d-1}$.
Consequently the polynomial discriminant of $G(r,p,n)$ is
\[
 \Delta = \Delta_{G(r,p,n)}(x)
 \;=\; c_0 \prod_{i=1}^n x_i^{\,d-1} \prod_{1 \le i < j \le n} (x_i^r - x_j^r)
\]
for some nonzero constant $c_0$; the second product disintegrates to
$1$ when $d=1$ (i.e.\ $p=r$), recovering the case $W(D_n)$ of
Section~\ref{sec:Dn}.

We take as fundamental invariants
\[
 y_j = \sum_{i=1}^n x_i^{\,jr}, \quad 1 \le j \le n-1,
 \qquad
 y_n = (x_1 \cdots x_n)^d,
\]
and set $\mathcal{O}_Y = \mathbb{C}[x_1,\dots,x_n]^W = \mathbb{C}[y_1,\dots,y_n]$
(a polynomial ring on these $n$ algebraically independent invariants,
by the Chevalley--Shephard--Todd theorem \cite{Chevalley1955}),
\[
 \widetilde{\mathcal{O}}_X := \mathbb{C}[x_1,\dots,x_n,\Delta^{-1}], \quad
 \widetilde{\mathcal{O}}_Y := \mathbb{C}[y_1,\dots,y_n,\Delta^{-2}], \quad
 \widetilde{\mathcal{D}}_Y := \mathbb{C}\Big\langle y_1,\dots,y_n,
 \frac{\partial}{\partial y_1},\dots,\frac{\partial}{\partial y_n},
 \Delta^{-2}\Big\rangle .
\]

\subsection{The generalized Jacobian lemma}\label{subsec:jacobian}

\begin{lemma}\label{lem:jacobian-general}
Let $A$ be the transpose of the Jacobian matrix
$\big(\partial y_i / \partial x_j\big)_{1 \le i,j \le n}$. Then
\[
 \det A \;=\; (-1)^{n-1}\, d\, r^{\,n-1} (n-1)! \;
 \prod_{i=1}^n x_i^{\,d-1} \prod_{1 \le i < j \le n} (x_j^r - x_i^r).
\]
In particular $\det A = c \, \Delta_{G(r,p,n)}(x)$ for some nonzero
constant $c$, and $\widetilde{\mathcal{O}}_X$ is a $\widetilde{\mathcal{D}}_Y$-module.
\end{lemma}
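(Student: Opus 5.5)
The plan is to compute $\det A$ directly by factoring $A$ down to a Vandermonde determinant in the variables $t_i := x_i^{\,r}$, and then to deduce the $\widetilde{\mathcal{D}}_Y$-module structure from the chain rule and Cramer's rule.

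\textbf{Step 1: the entries of $A$.} Put $e := x_1\cdots x_n$. For $1\le j\le n-1$ we have
\[
\partial y_j/\partial x_i = jr\,x_i^{\,jr-1}.
\]
For the last invariant we have
\[
\partial y_n/\partial x_i = d\,e^{d}\,x_i^{-1}.
\]
Since the determinant is unchanged by transposition, I will work with the Jacobian whose rows are indexed by the invariants $y_j$ and whose columns are indexed by the variables $x_i$. Every entry in column $i$ carries a factor $x_i^{-1}$, so factoring these out gives a factor $\prod_i x_i^{-1}$. Next I pull $jr$ out of row $j$ for $j\le n-1$, and $d\,e^{d}$ out of row $n$. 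The extracted scalar is
\[
\prod_{j=1}^{n-1}(jr)\cdot d\,e^{d} = r^{\,n-1}(n-1)!\,d\,e^{d}.
\]

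\textbf{Step 2: reduction to a Vandermonde determinant.} After Step 1, the row indexed by $j\le n-1$ is $(t_1^{\,j},\dots,t_n^{\,j})$ and the last row is $(1,\dots,1)$. Moving the last row to the top requires $n-1$ adjacent row transpositions, which contributes the sign $(-1)^{n-1}$. The resulting matrix is the Vandermonde matrix in $t_1,\dots,t_n$, whose determinant is
\[
\prod_{i<j}(t_j-t_i) = \prod_{i<j}(x_j^{\,r}-x_i^{\,r}).
\]
Collecting the factors, $e^{d}\prod_i x_i^{-1} = \prod_i x_i^{\,d-1}$, which gives exactly the stated formula. This bookkeeping of signs and scalars is the only delicate point, and it is routine.

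\textbf{Step 3: comparison with $\Delta$.} Comparing with the factorization of $\Delta_{G(r,p,n)}$ obtained just before the lemma (from $\prod_k(x_i-\xi^k x_j)=x_i^r-x_j^r$ and the diagonal factors $x_i^{\,d-1}$), we see that $\det A = c\,\Delta$ with $c\neq0$. Here $c$ absorbs $c_0$, the sign $(-1)^{n(n-1)/2}$ from reversing $x_j^r - x_i^r$ to $x_i^r - x_j^r$, and the scalar computed in Step 1.

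\textbf{Step 4: the module structure.} This is the main conceptual step. By the chain rule,
\[
\partial/\partial x_j = \sum_i (\partial y_i/\partial x_j)\,\partial/\partial y_i .
\]
Inverting this system by Cramer's rule gives
\[
\partial/\partial y_i = \sum_j (\det A)^{-1}\, c_{ij}\,\partial/\partial x_j,
\]
with cofactors $c_{ij}\in\mathcal{O}_X$. Since $\det A = c\,\Delta$ is invertible in $\widetilde{\mathcal{O}}_X$, these expressions are derivations of $\widetilde{\mathcal{O}}_X$ (localization preserves derivations). They restrict to the usual $\partial/\partial y_i$ on $\widetilde{\mathcal{O}}_Y\subset\widetilde{\mathcal{O}}_X$; this inclusion holds because each $y_i$ is a polynomial in the $x$'s and $\Delta^{-2}\in\widetilde{\mathcal{O}}_X$.

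The operators $y_i$ and $\Delta^{-2}$ act on $\widetilde{\mathcal{O}}_X$ by multiplication. The defining relations of $\widetilde{\mathcal{D}}_Y$ hold for this action for three reasons:
\begin{enumerate}[label=(\alph*)]
  \item the new operators commute pairwise, being the dual frame to $dy_1,\dots,dy_n$ on the open set where $\det A\neq0$;
  \item the commutators with the coordinates are $[\partial_{y_i},y_j]=\delta_{ij}$;
  \item the Leibniz rule handles commutators with $\Delta^{-2}$.
\end{enumerate}
This gives the $\widetilde{\mathcal{D}}_Y$-module structure on $\widetilde{\mathcal{O}}_X$.
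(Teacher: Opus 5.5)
Your proposal is correct and follows the paper's proof essentially step for step: you factor $x_i^{-1}$ from each column, pull $jr$ and $d\prod_k x_k^{d}$ out of the rows, move the constant row to the top at the cost of $(-1)^{n-1}$, and evaluate the Vandermonde determinant in the $x_i^{r}$. Your Step 4 is more careful than the paper's one-line justification, since you check that the Cramer-rule derivations $\partial/\partial y_i$ commute pairwise, satisfy $[\partial_{y_i},y_j]=\delta_{ij}$, and restrict correctly to $\widetilde{\mathcal O}_Y$.
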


\begin{proof}
For $1 \le j \le n-1$,
$\partial y_j / \partial x_i = j r \, x_i^{\,jr-1}$.
For $j=n$,
\[
 \frac{\partial y_n}{\partial x_i}
 = d\, x_i^{\,d-1} \prod_{k \neq i} x_k^{\,d}.
\]
Since $d \le r$, we have $jr - 1 \ge d-1$ for every $j \ge 1$, so the
$i$-th column of the Jacobian matrix has common factor $x_i^{\,d-1}$.
Writing $P := \prod_{k=1}^n x_k^{\,d}$, so that
$\prod_{k \neq i} x_k^{\,d} = P\, x_i^{-d}$, and setting
$u_i := x_i^{r}$, the $i$-th column becomes
\[
 x_i^{\,d-1} x_i^{-d}
 \big(r\, u_i,\; 2r\, u_i^2,\; \dots,\; (n-1) r\, u_i^{\,n-1},\; d P\big)^{T}
 = x_i^{-1} \big(r\, u_i,\; \dots,\; (n-1) r\, u_i^{\,n-1},\; d P\big)^{T}.
\]
Factor $x_i^{-1}$ out of every column, then $jr$ out of row $j$
($1 \le j \le n-1$) and $dP$ out of the last row:
\[
 \det A
 = \Big(\prod_{i=1}^n x_i^{-1}\Big) \cdot r^{\,n-1}(n-1)! \cdot dP
 \cdot \det N,
\]
where $N$ has rows $(u_1^{\,j}, \dots, u_n^{\,j})$, $1 \le j \le n-1$,
followed by the constant row $(1,\dots,1)$. Moving the last row to the
top costs $n-1$ transpositions, so by the classical Vandermonde
determinant,
\[
 \det N = (-1)^{n-1} \det \big(u_i^{\,j-1}\big)_{1 \le i,j \le n}
 = (-1)^{n-1} \prod_{i<j} (u_j - u_i).
\]
Since $\big(\prod_i x_i^{-1}\big) \cdot P = \prod_i x_i^{\,d-1}$ and
$u_j - u_i = x_j^r - x_i^r$, the formula follows. That
$\widetilde{\mathcal{O}}_X$ is a $\widetilde{\mathcal{D}}_Y$-module
now follows directly: since $\det A$ -- a nonzero scalar multiple of
$\Delta$ -- is itself nonzero on
$\widetilde{\mathcal{O}}_X$, the matrix $A$ is invertible there, and
$\partial/\partial y_1, \dots, \partial/\partial y_n$ are
recovered as $\mathbb{C}[x_1,\dots,x_n,\Delta^{-1}]$-linear
combinations of $\partial/\partial x_1, \dots, \partial/\partial x_n$.
\end{proof}

\begin{remark}
For $W(D_n) = G(2,2,n)$ one has $d=1$, and the formula reduces to
$\det A = (-1)^{n-1} 2^{n-1}(n-1)! \prod_{i<j}(x_j^2 - x_i^2)$; see
Section~\ref{sec:Dn}.
\end{remark}

\subsection{Simple components and their multiplicities}\label{subsec:simple-components}

The remainder of the argument is purely representation-theoretic,
and does not depend on the specific reflection group beyond
Lemma~\ref{lem:jacobian-general}.

\begin{proposition}\label{prop:embedding-DY}
There exists an injective map
$\mathbb{C}[W] \hookrightarrow \operatorname{Hom}_{\widetilde{\mathcal{D}}_Y}
(\widetilde{\mathcal{O}}_X, \widetilde{\mathcal{O}}_X)$.
\end{proposition}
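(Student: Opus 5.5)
The map will be the natural action of $W$ on $\widetilde{\mathcal{O}}_X$. An element $w \in W$ acts on $\mathcal{O}_X = \mathbb{C}[x_1,\dots,x_n]$ by a $\mathbb{C}$-algebra automorphism, and we extend this $\mathbb{C}$-linearly to $\mathbb{C}[W]$. We then show that each $w$ commutes with the action of $\widetilde{\mathcal{D}}_Y$, and that the resulting algebra map is injective.

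\textbf{Extension to the localization.} By construction $\Delta = c_0\prod_{s\in S}\alpha_s$, and $W$ permutes its set $S$ of reflections by conjugation. Hence $w(\Delta) = \chi(w)\Delta$ for a linear character $\chi$. This can also be read off from Lemma~\ref{lem:jacobian-general}: $\Delta$ is a nonzero multiple of the Jacobian determinant $\det A$. Since $y_1,\dots,y_n$ are invariant, the chain rule gives $w(\det A) = \det(w^{-1})\,\det A$, up to the convention for the action. Either way $w(\Delta^{-1}) = \chi(w)^{-1}\Delta^{-1}$, so each $w$ extends uniquely to a ring automorphism of $\widetilde{\mathcal{O}}_X = \mathbb{C}[x,\Delta^{-1}]$. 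Moreover $w$ fixes $\Delta^{2}$ whenever $\chi^2 = 1$. To avoid relying on that, it is cleaner to note that some power $\Delta^{m}$ is invariant and that $\Delta^{-2} \in \widetilde{\mathcal{O}}_X$ is $\chi^{-2}$-semi-invariant. What we actually need is that multiplication by the generators $y_i$ and $\Delta^{-2}$ of $\widetilde{\mathcal{D}}_Y$ commutes with $w$, and this holds for $y_i$ trivially. For $\Delta^{-2}$, I would either check that $\Delta^2$ is $W$-invariant (it is, up to the standard fact that $\prod_s \alpha_s^{e_s}$ is invariant) or replace $\Delta^{-2}$ by the invariant discriminant in $\mathcal{O}_Y$. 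Since the paper places $\Delta^{-2}$ in $\widetilde{\mathcal{O}}_Y$, I take $\Delta^2 \in \mathcal{O}_Y$ as given, so $w$ commutes with multiplication by $\Delta^{-2}$.

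\textbf{Commutation with $\partial/\partial y_j$.} This is the main step. By Lemma~\ref{lem:jacobian-general}, $\partial/\partial y_j$ acts on $\widetilde{\mathcal{O}}_X$ as the unique derivation $D_j$ of $\widetilde{\mathcal{O}}_X$ with $D_j(y_i) = \delta_{ij}$. Explicitly, $D_j = \sum_k (A^{-1})_{jk}\,\partial/\partial x_k$, which is well defined because $\det A$ is invertible. Now consider the conjugate $w D_j w^{-1}$. It is again a $\mathbb{C}$-derivation of $\widetilde{\mathcal{O}}_X$, and since the $y_i$ are invariant it satisfies $wD_jw^{-1}(y_i) = w(D_j(y_i)) = \delta_{ij}$. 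A derivation of $\widetilde{\mathcal{O}}_X$ is determined by its values on $y_1,\dots,y_n$, because $dy_1,\dots,dy_n$ form a basis of $\Omega_{\widetilde{\mathcal{O}}_X}$ over $\widetilde{\mathcal{O}}_X$ once $\det A$ is inverted. Therefore $wD_jw^{-1} = D_j$. The only subtle point is this uniqueness: a derivation vanishing on all $y_i$ vanishes on all $x_k$, since $0 = D(y_i) = \sum_k (\partial y_i/\partial x_k)\,D(x_k)$ and the coefficient matrix is invertible. It therefore vanishes on the localization.

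\textbf{Injectivity.} We now have an algebra map $\mathbb{C}[W] \to \operatorname{Hom}_{\widetilde{\mathcal{D}}_Y}(\widetilde{\mathcal{O}}_X,\widetilde{\mathcal{O}}_X)$, and it remains to show it is injective. Its restriction to the subspace $\mathcal{O}_X \subset \widetilde{\mathcal{O}}_X$ is the action of $\mathbb{C}[W]$ on polynomials. This action induces the action on $\Lambda_0 = \mathcal{O}_X/J_+$, which by Theorem~\ref{thm:2.3}(ii) is the left regular representation, and the regular representation is faithful. The last point to check is that if $a \in \mathbb{C}[W]$ kills $\widetilde{\mathcal{O}}_X$, then it kills $\mathcal{O}_X$ and hence $\Lambda_0$. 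This forces $a = 0$, which proves injectivity.
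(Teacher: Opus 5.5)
Your proof is correct and is essentially the paper's. Injectivity is argued exactly as in Lemma~\ref{lem:CW-injective}, and your commutation step is the paper's first (uniqueness-of-derivation) argument run directly on $\widetilde{\mathcal{O}}_X$, with uniqueness coming from the invertibility of the Jacobian of Lemma~\ref{lem:jacobian-general} instead of from the Galois/Artin argument over the fraction fields.

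On the $\Delta^{-2}$ point you rightly flag, which the paper passes over in silence, your parenthetical ``it is'' is false in general. $\Delta^2$ is $W$-invariant only when $d\le 2$: in $G(3,1,2)$, scaling $x_1$ by a primitive cube root of unity multiplies $\Delta^2=c_0^2x_1^4x_2^4(x_1^3-x_2^3)^2$ by a primitive cube root of unity. So the sound version is your alternative of inverting an invariant power of $\Delta$. That leaves $\widetilde{\mathcal{O}}_X$ unchanged, and the rest of your argument then goes through verbatim.
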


The proof has two independent parts: injectivity of the natural map
$\mathbb{C}[W] \to \operatorname{End}_{\mathbb{C}}(\widetilde{\mathcal{O}}_X)$
(forgetting the $\widetilde{\mathcal{D}}_Y$-linearity condition), and
the fact that this map lands inside the centralizer of
$\widetilde{\mathcal{D}}_Y$, i.e.\ that every element of
$\mathbb{C}[W]$ commutes with $\widetilde{\mathcal{D}}_Y$.

\begin{lemma}[Injectivity]\label{lem:CW-injective}
The natural map $\mathbb{C}[W] \to \operatorname{End}_{\mathbb{C}}(\widetilde{\mathcal{O}}_X)$
is injective.
\end{lemma}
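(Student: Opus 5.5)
The plan is to reduce injectivity to the linear independence of the distinct group elements, viewed as automorphisms of the fraction field, via Dedekind's lemma on independence of characters. Let $\phi \colon \mathbb{C}[W] \to \operatorname{End}_{\mathbb{C}}(\widetilde{\mathcal{O}}_X)$ be the map sending $\sum_{w} c_w w$ to the operator $f \mapsto \sum_w c_w\, w\cdot f$. First we check that this is well defined. Each $w \in W$ permutes the set of reflections by conjugation, and hence sends $\Delta$ to a nonzero scalar multiple of itself. By the explicit formula $\Delta = c_0\prod_i x_i^{d-1}\prod_{i<j}(x_i^r-x_j^r)$, a signed permutation with root-of-unity entries visibly rescales $\Delta$. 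So $W$ acts on $\widetilde{\mathcal{O}}_X=\mathbb{C}[x_1,\dots,x_n,\Delta^{-1}]$ by ring automorphisms, and $\phi$ is an algebra homomorphism.

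For injectivity, suppose $\sum_w c_w\, w\cdot f = 0$ for every $f \in \widetilde{\mathcal{O}}_X$. The action of $W$ on $\h^*=\mathbb{C}x_1\oplus\dots\oplus\mathbb{C}x_n$ is faithful, since $W$ is defined as a group of matrices. Hence the elements $w\in W$ are pairwise distinct ring automorphisms of $\widetilde{\mathcal{O}}_X$, and they extend to distinct automorphisms of the field $K=\mathbb{C}(x_1,\dots,x_n)$. Restricted to the multiplicative group $K^\times$, these are distinct group homomorphisms $K^\times\to K^\times$. By Dedekind--Artin independence of characters they are $K$-linearly independent, so in particular $\mathbb{C}$-linearly independent as maps on $K$.

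The relation holds a priori only on $\widetilde{\mathcal{O}}_X$, not on all of $K$, and closing this gap is the one genuine point of the proof. Any $g\in K$ can be written $g=a/b$ with $a,b\in\mathbb{C}[x]$, and we may take $b$ to be $W$-invariant by replacing it with $\prod_{w}w(b)$. Then $\sum_w c_w\, w(g) = b^{-1}\sum_w c_w\, w(a)=0$, because $a\in\widetilde{\mathcal{O}}_X$. So the relation extends to all of $K$, Dedekind forces $c_w=0$ for all $w$, and $\phi$ is injective.

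A simpler alternative avoids Dedekind altogether. Since $\mathcal{O}_X\subset\widetilde{\mathcal{O}}_X$, it suffices to show that $\mathbb{C}[W]$ acts faithfully on $\mathcal{O}_X$. This follows from Theorem~\ref{thm:2.3}: the quotient $\Lambda_0=\mathcal{O}_X/J_+$ is a $W$-equivariant quotient of $\mathcal{O}_X$ affording the regular representation $\mathbb{C}[W]$, and the regular representation is faithful. Hence an element acting by zero on $\mathcal{O}_X$ acts by zero on $\Lambda_0\cong\mathbb{C}[W]$, so it is $0$. I would present this second argument as the main proof, since it uses only results already recalled in the paper, and mention the Dedekind argument as a self-contained alternative.
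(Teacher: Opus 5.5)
Your proposal is correct, and the argument you would present as the main proof is exactly the paper's. In both, $\Lambda_0=\mathcal{O}_X/J_+$ is a $W$-equivariant quotient of $\mathcal{O}_X$ affording the faithful regular representation, so $\mathbb{C}[W]$ acts faithfully on $\mathcal{O}_X\subset\widetilde{\mathcal{O}}_X$. Your Dedekind alternative, including clearing denominators with a $W$-invariant $b$, is also sound: it is essentially the independence-of-characters fact the paper itself invokes later, in Step~2 of the proof of Corollary~\ref{cor:iso-CW}.
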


\begin{proof}
Recall from Subsection~\ref{subsec:specht} that
$\Lambda_0 = \mathcal{O}_X/J_+$ is isomorphic, as a
$\mathbb{C}[W]$-module, to the left regular representation
$\mathbb{C}[W]$. The regular representation is a \emph{faithful}
$\mathbb{C}[W]$-module: if $x \in \mathbb{C}[W]$ acts as zero on all of
$\mathbb{C}[W]$, then in particular $x = x \cdot 1 = 0$. Hence the
induced action of $\mathbb{C}[W]$ on the quotient module $\Lambda_0$
is already faithful. Since $\Lambda_0$ is a quotient of
$\mathcal{O}_X$ as a $\mathbb{C}[W]$-module, any
$x \in \mathbb{C}[W]$ annihilating all of $\mathcal{O}_X$ would also
annihilate $\Lambda_0$, forcing $x = 0$ by the previous sentence.
Thus $\mathbb{C}[W] \to \operatorname{End}_{\mathbb{C}}(\mathcal{O}_X)$
is injective, and localizing at $\Delta$ (which cannot introduce new
kernel elements, as $\widetilde{\mathcal{O}}_X \supset \mathcal{O}_X$)
gives injectivity into
$\operatorname{End}_{\mathbb{C}}(\widetilde{\mathcal{O}}_X)$.
\end{proof}

It remains to show that every element of $\mathbb{C}[W]$ commutes
with $\widetilde{\mathcal{D}}_Y$; combined with
Lemma~\ref{lem:CW-injective}, this proves
Proposition~\ref{prop:embedding-DY}. We give two independent proofs of
this commutation fact.

\begin{proof}[Proof of commutation with $\widetilde{\mathcal{D}}_Y$]
It suffices to show every element of $\mathbb{C}[W]$ commutes with
$y_1,\dots,y_n$ and with $\partial/\partial y_1, \dots, \partial/\partial
y_n$. The first is immediate since the $y_j$ are $W$-invariant. For
the second, let $K = \mathbb{C}(y_1,\dots,y_n)$ and
$L = \mathbb{C}(x_1,\dots,x_n)$. Then $K = L^{W}$ and $L/K$ is a
Galois extension with group $W$ -- this holds for any finite group
acting faithfully by field automorphisms on $L$, by Artin's theorem
\cite{Artin1942}, \cite[Chap.\ VI]{Lang2002},
regardless of the specific reflection group. Any derivation $D$ on $K$
extends uniquely to a derivation on $L$, and
$\sigma^{-1} D \sigma = D$ for every $\sigma \in W$, so $\sigma$
commutes with $D$.
\end{proof}

\begin{proof}[Alternative proof of commutation with $\widetilde{\mathcal{D}}_Y$]
We give a second, purely computational argument, which avoids
Artin's theorem and instead uses only the chain rule together with
the matrix $A$ of Lemma~\ref{lem:jacobian-general}.

For $w \in W$, let $M(w) \in GL_n(\mathbb{C})$ be the matrix of $w$ in
the coordinates $x_1,\dots,x_n$, so that $(wx)_k = \sum_i M(w)_{ki}x_i$,
and $W$ acts on functions by $(w\cdot f)(x) = f(w^{-1}x)$, as in
Subsection~\ref{subsec:specht}. Write $J(x) = \big(\partial y_i/\partial x_j\big)_{i,j}$,
so that $A = J^T$.

\emph{Step 1: an identity for $A$.} Since $y_j(wx) = y_j(x)$ for every
$x$, differentiating in $x_i$ via the chain rule gives
\[
  \sum_k \frac{\partial y_j}{\partial x_k}(wx)\, M(w)_{ki}
  = \frac{\partial y_j}{\partial x_i}(x),
\]
i.e.\ $J(wx)\,M(w) = J(x)$, equivalently
\[
  A(wx) = \big(M(w)^T\big)^{-1} A(x),
  \qquad\text{so}\qquad
  A(wx)^{-1} = A(x)^{-1} M(w)^T. \tag{$\star$}
\]

\emph{Step 2: commutation with $y_j$.} Immediate, since the $y_j$ are
$W$-invariant: $w\cdot(y_j f) = y_j\,(w\cdot f)$ for every $f$.

\emph{Step 3: commutation with $\partial/\partial y_j$.} Recall
$\partial/\partial y_j = \sum_i A^{-1}(x)_{ji}\,\partial/\partial x_i$.
For $f \in \widetilde{\mathcal{O}}_X$ and $x$ fixed,
\[
  \frac{\partial}{\partial y_j}(w\cdot f)(x)
  = \sum_i A^{-1}(x)_{ji}\,\frac{\partial}{\partial x_i}\big[f(w^{-1}x)\big]
  = \sum_{i,k} A^{-1}(x)_{ji}\, M(w^{-1})_{ki}\,
    \frac{\partial f}{\partial x_k}(w^{-1}x)
  = \sum_k \big[A^{-1}(x) M(w^{-1})^T\big]_{jk}\,
    \frac{\partial f}{\partial x_k}(w^{-1}x).
\]
Set $y := w^{-1}x$. Applying $(\star)$ at $y$ gives
$A(x)^{-1} = A(wy)^{-1} = A(y)^{-1}M(w)^T$, hence
\[
  A^{-1}(x)\,M(w^{-1})^T
  = A(y)^{-1}M(w)^T M(w^{-1})^T
  = A(y)^{-1}\big(M(w^{-1})M(w)\big)^T
  = A(y)^{-1} = A^{-1}(w^{-1}x).
\]
Therefore
\[
  \frac{\partial}{\partial y_j}(w\cdot f)(x)
  = \sum_k A^{-1}(w^{-1}x)_{jk}\, \frac{\partial f}{\partial x_k}(w^{-1}x)
  = \left(\frac{\partial f}{\partial y_j}\right)(w^{-1}x)
  = w\cdot\!\left(\frac{\partial f}{\partial y_j}\right)(x),
\]
so $w$ commutes with $\partial/\partial y_j$ for every $j$, and hence
with all of $\widetilde{\mathcal{D}}_Y$.
\end{proof}

\begin{remark}
This second proof has the advantage of relying on no external
citation -- it reuses directly the matrix $A$ already introduced in
Lemma~\ref{lem:jacobian-general} -- at the cost of a longer,
index-chasing computation, compared to the conceptual brevity of the
Galois-theoretic argument above.
\end{remark}

\begin{corollary}[The group algebra as the full centralizer]\label{cor:iso-CW}
\[
  \mathbb{C}[W] \;\cong\; \operatorname{Hom}_{\widetilde{\mathcal{D}}_Y}
  \big(\widetilde{\mathcal{O}}_X, \widetilde{\mathcal{O}}_X\big).
\]
\end{corollary}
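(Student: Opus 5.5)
Proposition~\ref{prop:embedding-DY} already gives an injective algebra map $\mathbb{C}[W] \hookrightarrow E := \operatorname{End}_{\widetilde{\mathcal{D}}_Y}(\widetilde{\mathcal{O}}_X)$, so the plan is to prove that every $\widetilde{\mathcal{D}}_Y$-linear endomorphism $\phi$ of $\widetilde{\mathcal{O}}_X$ is given by an element of $\mathbb{C}[W]$. I would do this by passing to the Galois picture. Let $K = \mathbb{C}(y_1,\dots,y_n) \subset L = \mathbb{C}(x_1,\dots,x_n)$, so that $L/K$ is Galois with group $W$ (Artin's theorem, as in the first proof of commutation). The classical isomorphism $L \otimes_K L \cong \prod_{w\in W} L$ then shows that $\operatorname{End}_K(L) = \bigoplus_{w \in W} L\cdot w$ (Dedekind independence of characters, plus a dimension count $|W|^2$ over $K$).

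\textbf{Step 1: $\phi$ is $\widetilde{\mathcal{O}}_Y$-linear and extends to $L$.} Since $\phi$ commutes with multiplication by the $y_j$ and by $\Delta^{-2}$, it is $\widetilde{\mathcal{O}}_Y$-linear. By Theorem~\ref{thm:2.3}(iii), $\mathcal{O}_X$ is free over $\mathcal{O}_Y$ of rank $|W|$, and $\Delta^2 \in \mathcal{O}_Y$ (the discriminant squared, up to a suitable power, is invariant). Hence $\widetilde{\mathcal{O}}_X = \mathcal{O}_X \otimes_{\mathcal{O}_Y} \widetilde{\mathcal{O}}_Y$ is free over $\widetilde{\mathcal{O}}_Y$. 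It follows that $\phi$ extends uniquely to a $K$-linear endomorphism of $L = \widetilde{\mathcal{O}}_X \otimes K$, and we can write $\phi = \sum_{w} a_w\, w$ with $a_w \in L$.

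\textbf{Step 2: the $a_w$ are constants.} By the commutation results, each $w$ commutes with $\partial/\partial y_j$. Hence
\[
  0 = [\partial/\partial y_j, \phi] = \sum_w \frac{\partial a_w}{\partial y_j}\, w ,
\]
where $\partial/\partial y_j$ is extended to $L$ as in Lemma~\ref{lem:jacobian-general}. Since the operators $w$ are $L$-linearly independent, this gives $\partial a_w/\partial y_j = 0$ for all $j$. Because $A$ is invertible, the $\partial/\partial x_i$ are $L$-combinations of the $\partial/\partial y_j$, so $\partial a_w/\partial x_i = 0$ for all $i$. In characteristic zero this forces $a_w \in \mathbb{C}$. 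Thus $\phi \in \mathbb{C}[W]$, which gives surjectivity.

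\textbf{Main obstacle.} The delicate point is the linear independence used in Step 2 and the identification $\operatorname{End}_K(L) = \bigoplus_w L w$. This requires $W$ to act faithfully on $L$, which Lemma~\ref{lem:CW-injective} supplies. It also requires the localization to be compatible, which follows from $\Delta^2$ being invariant, so that $\widetilde{\mathcal{O}}_Y$ is genuinely a subring of $\widetilde{\mathcal{O}}_X^W$. Two alternatives would also work: one could invoke a double-centralizer count, since $\dim_K \operatorname{End}_{K[\partial]}(L)$ is bounded by $|W|$; or one could check Step 2 directly on the free basis $\mathcal{F}$ of higher Specht polynomials. I would use the Dedekind-independence route, because it is uniform in $(r,p,n)$.
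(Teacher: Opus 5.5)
Your proposal follows the paper's proof essentially step for step: extend $\phi$ to a $K$-linear endomorphism of $L$, write $\phi=\sum_{w\in W}a_w\,w$ via Dedekind's lemma and the count $\dim_K\operatorname{End}_K(L)=|W|^2$, then use commutation with the $\partial/\partial y_j$ and Dedekind again to force $a_w\in\mathbb{C}$. Your local variations are harmless and somewhat more elementary: you get the extension from the freeness in Theorem~\ref{thm:2.3}(iii), and you deduce constancy from the chain rule $\partial/\partial x_i=\sum_j A_{ij}\,\partial/\partial y_j$ instead of Lemma~\ref{lem:der-span}. One caveat comes from the paper's definition of $\widetilde{\mathcal{O}}_Y$, not from you, although you assert it explicitly: $\Delta^2$ is $W$-invariant only when $d=r/p\le 2$, since for $d\ge 3$ the diagonal reflection $x_1\mapsto\xi^p x_1$ rescales it by $\xi^{\pm 2p}\neq 1$. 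So in general one must invert a $W$-invariant power of $\Delta$ to get $\widetilde{\mathcal{O}}_Y\subset\widetilde{\mathcal{O}}_X^W$ and $K=L^W$, which both your argument and the paper's silently need.
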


\begin{proof}
By Proposition~\ref{prop:embedding-DY}, the natural map is injective;
we prove surjectivity. Write $L = \operatorname{Frac}(\widetilde{\mathcal O}_X)$
and $K = \operatorname{Frac}(\widetilde{\mathcal O}_Y)$.

\smallskip\noindent\emph{Step 1: reduction to $L$.} Let
$\phi \in \operatorname{Hom}_{\widetilde{\mathcal D}_Y}(\widetilde{\mathcal O}_X,\widetilde{\mathcal O}_X)$.
Since $\phi$ commutes with multiplication by $y_1,\dots,y_n$, it is
$\widetilde{\mathcal O}_Y$-linear, and therefore extends uniquely to a
$K$-linear endomorphism of $L$ -- still denoted $\phi$ -- via
$\phi(f/g) := \phi(f)/g$ for $f \in \widetilde{\mathcal O}_X$,
$g \in \widetilde{\mathcal O}_Y \setminus \{0\}$. By the quotient rule,
$\phi$ continues to commute with the extended derivations
$\widetilde D_1,\dots,\widetilde D_n$ of $L$
(Lemma~\ref{lem:der-span}).\footnote{Lemma~\ref{lem:der-span} and
Lemma~\ref{lem:rigidity}, invoked later in this proof, are established
independently below and do not rely on the present corollary, so no
circularity arises from citing them here.}

\smallskip\noindent\emph{Step 2: $\operatorname{End}_K(L)$ as an
$L[W]$-module.} By Dedekind's lemma on the linear independence of
characters \cite[Chap.\ VI]{Lang2002}, the elements of $W$, viewed as
$K$-linear endomorphisms of $L$, are linearly independent over $L$
(where $L$ acts by left multiplication,
$a \cdot \Phi := (x \mapsto a\,\Phi(x))$ for $a \in L$). Since
\[
  \dim_K \operatorname{End}_K(L) = [L:K]^2 = |W|^2 = |W| \cdot \dim_K L,
\]
this linear independence forces, by a dimension count,
\[
  \operatorname{End}_K(L) = \bigoplus_{w \in W} L \cdot w,
\]
i.e.\ every $\Phi \in \operatorname{End}_K(L)$ is \emph{uniquely} of
the form $\Phi = \sum_{w \in W} a_w\, w$ for functions $a_w \in L$.

\smallskip\noindent\emph{Step 3: flatness forces constant
coefficients.} Write $\phi = \sum_w a_w\, w$ as in Step 2, and fix
$j \in \{1,\dots,n\}$. Since $\phi$ commutes with $\widetilde D_j$,
and each $w \in W$ already commutes with $\widetilde D_j$
(Proposition~\ref{prop:embedding-DY}), the Leibniz rule gives, for
every $x \in L$,
\[
  0 = \widetilde D_j(\phi(x)) - \phi(\widetilde D_j x)
    = \sum_{w \in W} \widetilde D_j(a_w)\, w(x).
\]
As this holds for all $x \in L$, Dedekind's lemma forces
$\widetilde D_j(a_w) = 0$ for every $j$ and every $w \in W$. By
Lemma~\ref{lem:der-span}, the derivations
$\widetilde D_1,\dots,\widetilde D_n$ span $\operatorname{Der}_{\mathbb{C}}(L)$
over $L$, so each $a_w$ is annihilated by \emph{every}
$\mathbb{C}$-derivation of $L$; as in the proof of
Lemma~\ref{lem:rigidity}, this forces $a_w \in \mathbb{C}$, since an
element of $L$ algebraic over $\mathbb{C}$ must already lie in
$\mathbb{C}$, $\mathbb{C}$ being algebraically closed.

Hence $\phi = \sum_{w \in W} a_w\, w$ with all $a_w \in \mathbb{C}$,
exhibiting $\phi$ as the restriction to $\widetilde{\mathcal O}_X$ of
an element of $\mathbb{C}[W]$. This proves surjectivity, and the
corollary follows.
\end{proof}

By Maschke's theorem \cite[Chap.\ XVIII]{Lang2002}, $\mathbb{C}[W]$
is semisimple, and by Theorem~\ref{thm:2.3},
\[
 \mathbb{C}[W] = \bigoplus_{\lambda \in \mathcal{P}_{r,n}/\sim}
 \bigoplus_{l=0}^{e(\lambda)-1} R^l_\lambda, \qquad
 R^l_\lambda = \bigoplus_{S \in \mathrm{STab}(\lambda)_d} V^l_S(\lambda),
\]
with primitive central idempotents $r^l_\lambda$, $\lambda \in
\mathcal{P}_{r,n}/\sim$, $l = 0,\dots,e(\lambda)-1$.

\begin{example}[Block decomposition of $\mathbb{C}[G(2,1,2)$]\label{ex:CW-decomp}
Let $W = G(2,1,2)$ (the dihedral group of order $8$; $r=2$, $p=1$,
$d=2$). Since $p=1$, $e(\lambda)=1$ for every
$\lambda \in \mathcal{P}_{2,2}$ (no further splitting by $l$), and
$\mathcal{P}_{2,2}$ has five elements: four singleton shapes (each
giving $f_\lambda=1$) and one ``pair'' shape $\lambda_0$ (giving
$f_{\lambda_0}=2$) -- these are worked out in detail, together with
explicit generating polynomials, in Subsection~\ref{subsec:Gr1n}. The
block decomposition displayed above reads concretely
\[
  \mathbb{C}[W] = \underbrace{\mathbb{C}}_{\mathrm{triv}} \oplus
    \underbrace{\mathbb{C}}_{\mathrm{sign}} \oplus
    \underbrace{\mathbb{C}}_{\chi^0} \oplus
    \underbrace{\mathbb{C}}_{\chi^1} \oplus
    \underbrace{\mathrm{Mat}_2(\mathbb{C})}_{R_{\lambda_0}},
\]
of total dimension $1+1+1+1+4 = 8 = |W|$: the four singleton shapes
give four $1\times 1$ blocks $R_\lambda \cong \mathbb{C}$ (the four
linear characters of $W$), while $\lambda_0$ gives the single
$2\times 2$ block $R_{\lambda_0} \cong \operatorname{Mat}_2(\mathbb{C})$
corresponding to the unique $2$-dimensional irreducible representation
of $W$. The primitive central idempotent of each $1\times 1$ block is
simply the character-averaging projector
$r_\lambda = \frac{1}{8}\sum_{g \in W} \overline{\chi_\lambda(g)}\, g$
(the characters here happen to be real-valued, so the bar is
inessential, but we keep it for consistency with the general formula
below); the block
$R_{\lambda_0}$ splits further into two primitive (non-central)
idempotents $e^{\lambda_0}_1, e^{\lambda_0}_2$, computed explicitly in
Subsection~\ref{subsec:Gr1n-idem}.
\end{example}

\begin{lemma}[Double centralizer lemma]\label{lem:double-centralizer}
Let $R$ be a $\mathbb{C}$-algebra and $A$ a finite-dimensional
semisimple $\mathbb{C}$-algebra ($\mathbb{C}=\overline{\mathbb{C}}$)
acting on a left $R$-module $M$ by $R$-endomorphisms, such that the
induced map $A \to \operatorname{End}_R(M)$ is an \emph{isomorphism}.
Write $A = \bigoplus_\lambda \operatorname{Mat}_{f_\lambda}(\mathbb{C})$
(Wedderburn's structure theorem, \cite[Chap.\ II, \S4]{Lang2002}),
with primitive idempotents
$e_1^\lambda,\dots,e_{f_\lambda}^\lambda$ in each block. Then:
\begin{enumerate}[label=(\alph*)]
  \item each $e_i^\lambda M$ is a simple $R$-module;
  \item $e_i^\lambda M \cong e_j^\lambda M$ as $R$-modules for all
    $i,j$ in the same block $\lambda$;
  \item $M = \bigoplus_{\lambda,i} e_i^\lambda M$, and
    $N_\lambda := e_1^\lambda M$ are pairwise non-isomorphic for
    $\lambda \ne \mu$.
\end{enumerate}
\end{lemma}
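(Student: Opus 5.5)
The plan is to translate everything about $R$-submodules of the pieces $e_i^\lambda M$ into statements about the algebra $A$, using the isomorphism $A \xrightarrow{\sim} \operatorname{End}_R(M)$ and the matrix-unit structure of $A$ given by Wedderburn's theorem. Fix, in each block $\operatorname{Mat}_{f_\lambda}(\mathbb{C})$, matrix units $E^\lambda_{ij}$ with $e_i^\lambda = E^\lambda_{ii}$. These satisfy $E^\lambda_{ij}E^\mu_{kl} = \delta_{\lambda\mu}\delta_{jk}E^\lambda_{il}$ and $\sum_{\lambda,i} e_i^\lambda = 1_A$.

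I would first prove (c) and (b), which are formal consequences of these relations. Since the idempotents $e_i^\lambda$ act by $R$-endomorphisms, each $e_i^\lambda M$ is an $R$-submodule. Orthogonality together with $\sum e_i^\lambda = 1$ then gives $M = \bigoplus_{\lambda,i} e_i^\lambda M$ as $R$-modules. For (b), left multiplication by $E^\lambda_{ji}$ maps $e_i^\lambda M$ to $e_j^\lambda M$, and $E^\lambda_{ij}$ maps back. Both maps are $R$-linear, and they are mutually inverse because $E^\lambda_{ij}E^\lambda_{ji} = e_i^\lambda$ acts as the identity on $e_i^\lambda M$.

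The key general identity, used in both remaining steps, is that for idempotents $e, e' \in A$
\[
 \operatorname{Hom}_R(eM, e'M) \;\cong\; e' \operatorname{End}_R(M)\, e \;\cong\; e' A e .
\]
To see this, extend a map $eM \to e'M$ by zero on $(1-e)M$; this identifies $\operatorname{Hom}_R(eM,e'M)$ with the endomorphisms $\phi$ of $M$ satisfying $\phi = e'\phi e$. Each $e_i^\lambda M$ is nonzero, because $A \to \operatorname{End}_R(M)$ is injective and $e_i^\lambda \neq 0$. For $\lambda \neq \mu$ we have $e_1^\mu A e_1^\lambda = 0$, so $N_\lambda$ and $N_\mu$ admit no nonzero homomorphism and are non-isomorphic, which completes (c). Likewise $\operatorname{End}_R(e_i^\lambda M) \cong e_i^\lambda A e_i^\lambda = \mathbb{C}$. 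Consequently $e_i^\lambda M$ has no nontrivial idempotent endomorphisms, so it is \emph{indecomposable}.

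The main obstacle is (a). Indecomposability does not yield simplicity in general, since a non-split extension of simples can have endomorphism ring $\mathbb{C}$. I would therefore close the gap by showing that $M$ is \emph{semisimple} as an $R$-module, because a semisimple module with endomorphism ring $\mathbb{C}$ is simple. In the abstract setting this has to be added to the hypotheses. In the intended application $R = \widetilde{\mathcal D}_Y$, $M = \widetilde{\mathcal O}_X$, I would verify it as follows. Over the complement of the discriminant, $\widetilde{\mathcal O}_X$ is the direct image of $\mathcal O$ under a finite étale Galois covering with group $W$, so it is an integrable connection. Its monodromy factors through the finite group $W$, and Maschke's theorem makes finite-group representations completely reducible. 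The Riemann--Hilbert correspondence between connections and local systems then transports this complete reducibility to $\widetilde{\mathcal O}_X$. Alternatively, I would try to prove semisimplicity directly. Given an $R$-submodule $N \subset M$, average a complementary $\widetilde{\mathcal O}_Y$-projection over $W$ (as in Maschke's theorem), using the Galois structure $\operatorname{End}_K(L) = \bigoplus_w L\,w$ from Corollary~\ref{cor:iso-CW}. One would then need to check that the averaged projection is flat, i.e.\ commutes with the $\widetilde D_j$. This flatness check is the step I expect to require the most care.
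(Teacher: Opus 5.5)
Your diagnosis is correct, and the gap you found is in the paper's proof, not in yours. The paper passes from $\operatorname{End}_R(e_i^\lambda M)\cong\mathbb{C}$ to ``$e_i^\lambda M$ is simple'' by citing Schur's lemma, but that is the false converse of Schur's lemma. It then gets (b) by applying Schur's lemma to these unproved simple modules, so its (b) inherits the gap. In fact (a) is false under the stated hypotheses. Take $R$ the algebra of upper triangular $2\times 2$ complex matrices, $M=\mathbb{C}^2$ and $A=\mathbb{C}$. Then $\operatorname{End}_R(M)=\mathbb{C}$, so $A\to\operatorname{End}_R(M)$ is an isomorphism, yet $\mathbb{C}\,(1,0)^{T}$ is a proper nonzero submodule of $e_1M=M$. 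A $\mathcal{D}$-module instance is the Weyl algebra acting on $\mathbb{C}[x,x^{-1}]$: its endomorphism ring is $\mathbb{C}$, but it contains the submodule $\mathbb{C}[x]$. Your proofs of (b) and (c) are correct and need no simplicity. For (b) you use the matrix units $E^\lambda_{ji},E^\lambda_{ij}$, which requires only the $A$-action. For (c) you use $\operatorname{Hom}_R(eM,e'M)\cong e'Ae$ together with $e_i^\lambda M\neq 0$ from injectivity, a point the paper skips. So the correct abstract statement is yours: (b), (c) and $\operatorname{End}_R(e_i^\lambda M)=\mathbb{C}$ hold unconditionally, and (a) holds under the added hypothesis that $M$ is semisimple over $R$.

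For the application, your Riemann--Hilbert route is sound in outline, but it silently uses two facts. First, a $\widetilde{\mathcal D}_Y$-submodule of the $\widetilde{\mathcal O}_Y$-coherent module $\widetilde{\mathcal O}_X$ is again coherent, hence a sub-connection. Second, the connection is regular. Finite monodromy alone does not guarantee regularity, and regularity is what lets you algebraize a Maschke complement of a sub-local system.

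A purely algebraic alternative is \'etale Galois descent along $X^{\circ}\to Y^{\circ}$, where $X^{\circ}$ is the complement of the reflection hyperplanes and $Y^{\circ}$ its image. The pullback of $\widetilde{\mathcal O}_X$ is $\widetilde{\mathcal O}_X\otimes_{\mathbb{C}}\mathbb{C}[W]$ with the trivial connection. Its $\mathcal{D}$-submodules are exactly the $\widetilde{\mathcal O}_X\otimes U$, since $\widetilde{\mathcal O}_X$ is simple over the differential operators on the connected $X^{\circ}$. The $W$-equivariant ones correspond to $W$-stable $U\subseteq\mathbb{C}[W]$, Maschke supplies a stable complement, and that complement descends. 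This is the module-level form of the machinery in \S\ref{subsec:galois-descent}.

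Your second idea, averaging a projection over $W$, does not merely need care; it fails. Since $W\subset\operatorname{End}_R(M)$, the average of the $w\pi w^{-1}$ commutes with $W$; nothing in the construction makes it commute with the $\partial/\partial y_j$. Worse, an operator commuting with $W$ has $W$-stable image, so it can never project onto a submodule such as $e_1^\lambda M$ with $f_\lambda\ge 2$, which is not $W$-stable. Indeed, by Corollary~\ref{cor:iso-CW} any $R$-linear projection would have to be an idempotent of $\mathbb{C}[W]$ itself.

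Finally, the \'etale-cover picture needs $\widetilde{\mathcal O}_Y=\widetilde{\mathcal O}_X^{W}$. With the paper's $\Delta^{-2}$ this holds only for $d\le 2$. For $d\ge 3$ a diagonal reflection multiplies $\Delta^2$ by a nontrivial root of unity, so multiplication by $\Delta^{-2}\in\widetilde{\mathcal D}_Y$ does not even commute with $W$. In that case one must invert the $W$-invariant discriminant instead.
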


\begin{proof}
Since $A = \operatorname{End}_R(M)$ exactly, for any idempotents
$e_i^\lambda, e_j^\mu$,
\[
  \operatorname{Hom}_R(e_i^\lambda M,\, e_j^\mu M) \;\cong\; e_i^\lambda A e_j^\mu,
\]
via $\varphi \mapsto e_i^\lambda \varphi(-) e_j^\mu$, which is
bijective precisely because $A$ is \emph{all} of
$\operatorname{End}_R(M)$. If $\lambda \ne \mu$, then
$e_i^\lambda A e_j^\mu = 0$ (orthogonal blocks), giving the
non-isomorphism in (c). If $\lambda = \mu$, then
$e_i^\lambda A e_j^\lambda \cong \mathbb{C}$ (the $(i,j)$ entry of a
matrix algebra); in particular
$\operatorname{End}_R(e_i^\lambda M) \cong \mathbb{C}$, so by Schur's
lemma \cite[\S 1.4]{FultonHarris1991} $e_i^\lambda M$ is simple,
giving (a); and
$\operatorname{Hom}_R(e_i^\lambda M, e_j^\lambda M) \cong \mathbb{C} \ne 0$
gives the isomorphism in (b), again by Schur's lemma.
\end{proof}

\paragraph{Idempotents of $\mathbb{C}[G(r,p,n)]$.}
Before stating Theorem~\ref{thm:main-general}, we make explicit the
central and primitive idempotents to which it applies, in terms of
the character theory of $W = G(r,p,n)$ furnished by
Theorem~\ref{thm:2.3}.

For $\lambda \in \mathcal{P}_{r,n}/\sim$ and $0 \le l < e(\lambda)$,
let $\chi^l_\lambda$ denote the character of the irreducible
representation $V^l_S(\lambda)$ -- independent of the choice of
$S \in \mathrm{STab}(\lambda)_d$, since all such $V^l_S(\lambda)$ are
isomorphic (Theorem~\ref{thm:2.3}(i)) -- of dimension
$f^l_\lambda := \dim_{\mathbb{C}} V^l_S(\lambda)$. By the standard
orthogonality relations for characters of a finite group
\cite[\S 2.3]{FultonHarris1991}, the corresponding primitive central
idempotent of $\mathbb{C}[W]$ is
\[
  r^l_\lambda = \frac{f^l_\lambda}{|W|} \sum_{g \in W} \overline{\chi^l_\lambda(g)}\; g,
\]
satisfying $\sum_{\lambda,l} r^l_\lambda = 1$ and
$r^l_\lambda r^j_\mu = 0$ for $(\lambda,l) \ne (\mu,j)$; this is
exactly the identity element of the block $R^l_\lambda$ displayed
above.

Within each block, fix a matrix realization
$\rho^l_\lambda : W \to \mathrm{GL}_{f^l_\lambda}(\mathbb{C})$ of
$V^l_S(\lambda)$ for one choice of $S$. The matrix-coefficient
formula \cite[\S 2.4]{FultonHarris1991} gives, for
$1 \le i,j \le f^l_\lambda$,
\[
  e^l_{\lambda,ij} = \frac{f^l_\lambda}{|W|} \sum_{g \in W}
    \overline{\rho^l_\lambda(g)_{ji}}\; g \;\in\; R^l_\lambda \subset \mathbb{C}[W],
\]
satisfying the matrix-unit relations
$e^l_{\lambda,ij}\, e^l_{\lambda,kl'} = \delta_{jk}\, e^l_{\lambda,il'}$
and $\sum_i e^l_{\lambda,ii} = r^l_\lambda$. In particular, each
diagonal element $e^l_{\lambda,ii}$ ($1 \le i \le f^l_\lambda$) is a
\emph{primitive idempotent} of $\mathbb{C}[W]$; relabelling
$e^{\lambda,l}_i := e^l_{\lambda,ii}$, these are exactly the
idempotents $e_1^\lambda,\dots,e_{f_\lambda}^\lambda$ of
Lemma~\ref{lem:double-centralizer} for the block $(\lambda,l)$. Every
primitive idempotent of $\mathbb{C}[W]$ arises this way, for a unique
pair $(\lambda,l)$ and some $i$ (up to the residual freedom of
choosing the basis diagonalizing $\rho^l_\lambda$). It is this
explicit family $\{e^{\lambda,l}_i\}$ to which
Theorem~\ref{thm:main-general} below applies.

\begin{theorem}\label{thm:main-general}
Fix $\lambda \in \mathcal{P}_{r,n}/\sim$ and $0 \le l < e(\lambda)$,
and enumerate $\mathrm{STab}(\lambda)_d = \{S_1,\dots,S_{f^l_\lambda}\}$.
For every $i \in \{1,\dots,f^l_\lambda\}$, writing $e^{\lambda,l}_i$
for the corresponding primitive idempotent of $\mathbb{C}[W]$
(as described above):
\begin{enumerate}[label=(\roman*)]
 \item $e^{\lambda,l}_i\,\widetilde{\mathcal{O}}_X$ is a nontrivial
 $\widetilde{\mathcal{D}}_Y$-submodule of $\widetilde{\mathcal{O}}_X$;
 \item the $\widetilde{\mathcal{D}}_Y$-module
 $e^{\lambda,l}_i\,\widetilde{\mathcal{O}}_X$ is simple;
 \item there exists a higher Specht polynomial $F^l_{S_i,T}$, for some
 $T \in \mathrm{STab}(\lambda)_{d\,b(\lambda)}$, such that
 $e^{\lambda,l}_i\,\widetilde{\mathcal{O}}_X = \widetilde{\mathcal{D}}_Y \, F^l_{S_i,T}$.
\end{enumerate}
\end{theorem}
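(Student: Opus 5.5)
The argument applies Lemma~\ref{lem:double-centralizer} with $R=\widetilde{\mathcal{D}}_Y$, $M=\widetilde{\mathcal{O}}_X$ and $A=\mathbb{C}[W]$. The hypotheses are all in place: $\mathbb{C}[W]$ is finite-dimensional and semisimple by Maschke's theorem; it acts on $\widetilde{\mathcal{O}}_X$ by $\widetilde{\mathcal{D}}_Y$-endomorphisms by Proposition~\ref{prop:embedding-DY}; and the induced map $\mathbb{C}[W]\to\operatorname{End}_{\widetilde{\mathcal{D}}_Y}(\widetilde{\mathcal{O}}_X)$ is an isomorphism by Corollary~\ref{cor:iso-CW}. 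The idempotents $e^{\lambda,l}_i=e^l_{\lambda,ii}$ constructed just before the theorem are the diagonal matrix units of the Wedderburn block $R^l_\lambda\cong\operatorname{Mat}_{f^l_\lambda}(\mathbb{C})$, so they are exactly the primitive idempotents the lemma requires.

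\emph{Part (i).} The set $e^{\lambda,l}_i\widetilde{\mathcal{O}}_X$ is a $\widetilde{\mathcal{D}}_Y$-submodule, because $e^{\lambda,l}_i$ commutes with $\widetilde{\mathcal{D}}_Y$. It remains to show it is nonzero, and faithfulness suffices. By Lemma~\ref{lem:CW-injective}, $\mathbb{C}[W]$ acts faithfully on $\mathcal{O}_X\subset\widetilde{\mathcal{O}}_X$. Hence the nonzero element $e^{\lambda,l}_i$ cannot annihilate $\widetilde{\mathcal{O}}_X$.

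\emph{Part (ii).} This is Lemma~\ref{lem:double-centralizer}(a) verbatim. Explicitly, $\operatorname{End}_{\widetilde{\mathcal{D}}_Y}(e^{\lambda,l}_i\widetilde{\mathcal{O}}_X)\cong e^{\lambda,l}_i\,\mathbb{C}[W]\,e^{\lambda,l}_i\cong\mathbb{C}$.

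\emph{Part (iii).} Because the module $e^{\lambda,l}_i\widetilde{\mathcal{O}}_X$ is simple, it is generated by any of its nonzero elements. So the task reduces to exhibiting a higher Specht polynomial $F^l_{S_i,T}$ that lies in $e^{\lambda,l}_i\widetilde{\mathcal{O}}_X$ and is nonzero. I would do this in three steps.
\begin{enumerate}[label=(\arabic*)]
\item Use Theorem~\ref{thm:2.3}(iii): the family $\mathcal{F}$ is a basis of $\mathcal{O}_X$ over $\mathcal{O}_Y$. Since $\mathcal{O}_Y$ is $W$-fixed, $\mathcal{O}_X\cong\mathcal{O}_Y\otimes_{\mathbb{C}}\operatorname{span}(\mathcal{F})$ as $W$-modules. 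By Theorem~\ref{thm:2.3}(ii), $\operatorname{span}(\mathcal{F})$ realizes the regular representation, decomposed as $\bigoplus V^l_S(\lambda)$. (One should check that the lifts of the $F^l_{S,T}$ to $\mathcal{O}_X$ span $W$-stable subspaces; the representatives in Morita--Yamada are constructed so that this holds.)
\item Consequently $e^{\lambda,l}_i\,\mathcal{O}_X=\mathcal{O}_Y\otimes e^{\lambda,l}_i\operatorname{span}(\mathcal{F})$. The image $e^{\lambda,l}_i\operatorname{span}(\mathcal{F})$ is a space of dimension $f^l_\lambda$, one vector in each copy $V^l_{S}(\lambda)$.
\item Choose the matrix realization $\rho^l_\lambda$ compatibly with the enumeration $S_1,\dots,S_{f^l_\lambda}$. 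The natural choice is a basis of $V^l_{S_1}(\lambda)$ given by the $F^l_{S_1,T}$ for suitable $T$; then $e^l_{\lambda,ii}$ acts on $V^l_{S}(\lambda)$ as the projection onto one basis line, and one gets a nonzero $F^l_{S_i,T}$ fixed by $e^{\lambda,l}_i$.
\end{enumerate}

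The main obstacle is step (3), the bookkeeping that matches the index $i$ of the idempotent (a basis index inside one irreducible) with the tableau $S_i$ (which labels the copy of the irreducible in the regular representation). As stated, the roles of $S$ and $T$ look interchanged. My first attempt would be to re-index: use the symmetry of the regular representation, in which the left and right actions swap the two tableau labels, and choose $\rho^l_\lambda$ via the basis $\{F^l_{S_1,T}\}_T$ transported by $W$-isomorphisms $V^l_{S_1}\cong V^l_{S_i}$. After this re-indexing, the projection $e^{\lambda,l}_i$ picks out, up to a unit of $\widetilde{\mathcal{O}}_Y$, a single higher Specht polynomial. Simplicity then gives $e^{\lambda,l}_i\widetilde{\mathcal{O}}_X=\widetilde{\mathcal{D}}_Y F^l_{S_i,T}$.
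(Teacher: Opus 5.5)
Your treatment of (i) and (ii) follows the paper's route: Lemma~\ref{lem:double-centralizer} with $R=\widetilde{\mathcal D}_Y$, $M=\widetilde{\mathcal O}_X$, $A=\mathbb{C}[W]$. Your faithfulness argument for $e^{\lambda,l}_i\widetilde{\mathcal O}_X\neq 0$ is fine. The gap is the inference you make explicit in (ii). From $\operatorname{End}_{\widetilde{\mathcal D}_Y}(e\widetilde{\mathcal O}_X)\cong e\,\mathbb{C}[W]\,e\cong\mathbb{C}$ you conclude simplicity. That is Schur's lemma read backwards, and it only gives indecomposability. Lemma~\ref{lem:double-centralizer}(a), on which the paper's proof also rests, is false under its stated hypotheses. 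Take $R$ the upper triangular $2\times 2$ matrices acting on $M=\mathbb{C}^2$, and $A=\mathbb{C}$. Then $\operatorname{End}_R(M)=\mathbb{C}$, but the line through the first basis vector is a proper submodule.

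What is missing is semisimplicity of $\widetilde{\mathcal O}_X$ as a $\widetilde{\mathcal D}_Y$-module, since a semisimple module with endomorphism ring $\mathbb{C}$ is simple. This can be assembled from the paper's own ingredients:
\begin{enumerate}
\item Saturation: a $\widetilde{\mathcal D}_Y$-submodule $N$ and the quotient $\widetilde{\mathcal O}_X/N$ are finitely generated $\widetilde{\mathcal O}_Y$-modules with integrable connection on a smooth affine variety. Hence they are locally free, the quotient is torsion-free, and $N=KN\cap\widetilde{\mathcal O}_X$.
\item Base change: $L\otimes_K L\cong\prod_{w\in W}L$ carries the trivial connection. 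The minimal-support argument of Corollary~\ref{cor:twisted-invariants} then identifies derivation-stable $K$-subspaces of $L$ with subrepresentations of $\mathbb{C}[W]$.
\item Complement: Maschke gives a derivation-stable complement of $KN$ in $L$. The corresponding projection commutes with the derivations, so it lies in $\mathbb{C}[W]$ by Steps~2--3 of the proof of Corollary~\ref{cor:iso-CW}. It therefore preserves $\widetilde{\mathcal O}_X$ and splits off $N$.
\end{enumerate}

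In (iii), step (3) does not close. Transporting a basis along abstract $W$-isomorphisms $V^l_{S_1}(\lambda)\cong V^l_{S_i}(\lambda)$ produces vectors that need not be higher Specht polynomials. So you have not shown that $e^{\lambda,l}_i$ fixes any $F^l_{S_i,T}$. The fact you need is that $W$ acts on each family $\{F^l_{S,T}\}_{T\in\mathrm{STab}(\lambda)_{db(\lambda)}}$ by the same matrices for every $S$, i.e.\ that $F^l_{S_1,T}\mapsto F^l_{S_i,T}$ is $W$-equivariant. You also need the polynomial-level $W$-stability you already flag, since $e$ acts on $\widetilde{\mathcal O}_X$ and not on $\Lambda_0$. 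The $S$-independence is what one expects from the Young-symmetrizer construction, where permutations act by relabelling $T$. But it has to be proved or cited.

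Granting it, take $\rho^l_\lambda$ to be the common matrix and set $e_{ii}=\frac{f^l_\lambda}{|W|}\sum_g \rho^l_\lambda(g^{-1})_{ii}\,g$. (The conjugate formula in the paper presupposes a unitary $\rho$, which a Specht basis is not.) Then $e_{ii}F^l_{S,T_j}=\delta_{ij}F^l_{S,T_i}$ for every $S$, so $F^l_{S_i,T_i}$ is a nonzero element of $e_{ii}\widetilde{\mathcal O}_X$. Without the $S$-independence you still get $F^l_{S_1,T_i}\in e_{ii}\widetilde{\mathcal O}_X$, a higher Specht generator, but not one with first index $S_i$.

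Your suspicion that $S$ and $T$ are interchanged is justified. The idempotent index naturally runs over $\mathrm{STab}(\lambda)_{db(\lambda)}$, which has $f^l_\lambda$ elements, whereas $|\mathrm{STab}(\lambda)_d|$ can differ. For $D_3$ and $\lambda=((2),(1))$ it is $2$, while $f^0_\lambda=3$.

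The paper's own argument for (iii) asserts $\mathbb{C}[W]F^l_{S_i,T}\subset e^{\lambda,l}_i\widetilde{\mathcal O}_X$. This fails whenever $f^l_\lambda>1$, because $e^{\lambda,l}_i\widetilde{\mathcal O}_X$ meets each copy of $V^l(\lambda)$ only in a line. So your projection-based route is the right one, but it needs the $S$-independence to be finished.
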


\begin{proof}
We apply Lemma~\ref{lem:double-centralizer} with $R = \widetilde{\mathcal D}_Y$,
$M = \widetilde{\mathcal O}_X$, $A = \mathbb{C}[W]$. The hypotheses hold
precisely by what we have already established: $A$ is semisimple
(Maschke), and $A \to \operatorname{End}_R(M)$ is an isomorphism by
Corollary~\ref{cor:iso-CW}. Applying the lemma to the block
$(\lambda,l)$ and idempotent $e^{\lambda,l}_i$: part~(c) of the lemma
exhibits $e^{\lambda,l}_i\,\widetilde{\mathcal{O}}_X$ as a genuine
direct-summand submodule of $\widetilde{\mathcal{O}}_X$, while
part~(a) shows it is simple -- and simple modules are by definition
nonzero, so part~(a) alone already gives both nontriviality and
simplicity. Together, (a) and (c) give parts~(i) and~(ii) of the
theorem, with no case-by-case verification required.

For~(iii): this tableau-indexing of the primitive idempotents is not
an extra choice grafted onto Lemma~\ref{lem:double-centralizer} -- it
is exactly Theorem~\ref{thm:2.3}(ii)'s decomposition
$\mathbb{C}[W] = \bigoplus_{\lambda,l} \bigoplus_{S \in \mathrm{STab}(\lambda)_d} V^l_S(\lambda)$,
whose summands are already indexed by tableaux $S$; enumerating
$\mathrm{STab}(\lambda)_d = \{S_1,\dots,S_{f^l_\lambda}\}$ and matching
each $V^l_{S_i}(\lambda)$ with the corresponding minimal left ideal
$\mathbb{C}[W]e^{\lambda,l}_i$ recovers exactly the matrix units
$e^l_{\lambda,ii}$ constructed above, i.e.\ $e^{\lambda,l}_i$
corresponds precisely to the tableau $S_i$. By
Theorem~\ref{thm:2.3}(ii)--(iii), the associated block
$V^l_{S_i}(\lambda) = \mathbb{C}[W]F^l_{S_i,T}$ is cyclic, generated by
a higher Specht polynomial $F^l_{S_i,T}$. Hence
$F^l_{S_i,T} \in \mathbb{C}[W]F^l_{S_i,T} \subset e^{\lambda,l}_i\,\widetilde{\mathcal O}_X$
is a nonzero element of $e^{\lambda,l}_i\,\widetilde{\mathcal O}_X$; by
the simplicity already established in~(ii), any nonzero element
generates the whole module, so
$e^{\lambda,l}_i\,\widetilde{\mathcal O}_X = \widetilde{\mathcal D}_Y\, F^l_{S_i,T}$.
\end{proof}

\begin{corollary}\label{cor:iso-simple}
With the above notation, for fixed $\lambda \in \mathcal{P}_{r,n}/\sim$
and $0 \le l < e(\lambda)$,
\[
  e^{\lambda,l}_i\,\widetilde{\mathcal{O}}_X \;\cong_{\widetilde{\mathcal{D}}_Y}\;
  e^{\lambda,l}_j\,\widetilde{\mathcal{O}}_X
  \qquad \text{for all } 1 \le i,j \le f^l_\lambda.
\]
\end{corollary}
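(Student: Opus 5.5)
This is essentially part (b) of Lemma~\ref{lem:double-centralizer}, applied with $R=\widetilde{\mathcal D}_Y$, $M=\widetilde{\mathcal O}_X$ and $A=\mathbb{C}[W]$. The hypotheses are those already checked in the proof of Theorem~\ref{thm:main-general}: $\mathbb{C}[W]$ is semisimple by Maschke, and $\mathbb{C}[W]\to\operatorname{End}_{\widetilde{\mathcal D}_Y}(\widetilde{\mathcal O}_X)$ is an isomorphism by Corollary~\ref{cor:iso-CW}. To avoid relying only on Schur's lemma, I would also write down the isomorphism explicitly using the matrix units $e^l_{\lambda,ij}$ constructed before Theorem~\ref{thm:main-general}.

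Define $\varphi_{ji}\colon e^{\lambda,l}_i\widetilde{\mathcal O}_X\to e^{\lambda,l}_j\widetilde{\mathcal O}_X$ by $m\mapsto e^l_{\lambda,ji}\,m$, and $\varphi_{ij}$ symmetrically.

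\emph{Well-defined.} Take $m=e^l_{\lambda,ii}u$. The matrix-unit relations give $e^l_{\lambda,ji}m=e^l_{\lambda,ji}u=e^l_{\lambda,jj}e^l_{\lambda,ji}u$, which lies in $e^{\lambda,l}_j\widetilde{\mathcal O}_X$.

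\emph{Linear.} Since $e^l_{\lambda,ji}\in\mathbb{C}[W]$, and every element of $\mathbb{C}[W]$ commutes with $\widetilde{\mathcal D}_Y$ (Proposition~\ref{prop:embedding-DY}), the map $\varphi_{ji}$ is $\widetilde{\mathcal D}_Y$-linear.

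\emph{Inverse.} We have $\varphi_{ij}\varphi_{ji}(m)=e^l_{\lambda,ij}e^l_{\lambda,ji}m=e^l_{\lambda,ii}m=m$ on $e^{\lambda,l}_i\widetilde{\mathcal O}_X$, because $e^l_{\lambda,ii}$ is idempotent. Symmetrically, $\varphi_{ji}\varphi_{ij}$ is the identity on the $j$-th summand.

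Nonzeroness of both sides is not actually needed for the isomorphism, but it is supplied by Theorem~\ref{thm:main-general}(i). The only point needing care is the identification of the $e^{\lambda,l}_i$ with the diagonal matrix units $e^l_{\lambda,ii}$ of a single block. This holds by the relabelling fixed before Theorem~\ref{thm:main-general}, so that the same $\rho^l_\lambda$ provides the off-diagonal units $e^l_{\lambda,ij}$. If another choice of primitive idempotents in the block were used, I would instead invoke Lemma~\ref{lem:double-centralizer}(b) directly: $\operatorname{Hom}(e_iM,e_jM)\cong e_iAe_j\neq0$, and both modules are simple, so Schur's lemma gives the isomorphism.
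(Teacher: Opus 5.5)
Your proposal is correct, and its explicit part takes a genuinely leaner route than the paper. The paper gives no separate proof of this corollary. It is read off from Lemma~\ref{lem:double-centralizer}(b), whose proof identifies $\operatorname{Hom}_{\widetilde{\mathcal D}_Y}(e_iM,e_jM)$ with a one-dimensional corner of $\mathbb{C}[W]$. That identification needs the \emph{surjectivity} of $\mathbb{C}[W]\to\operatorname{End}_{\widetilde{\mathcal D}_Y}(\widetilde{\mathcal O}_X)$ (Corollary~\ref{cor:iso-CW}). The lemma then upgrades a nonzero homomorphism to an isomorphism by Schur's lemma, using the simplicity from part~(a).

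Your maps $m\mapsto e^l_{\lambda,ji}m$ and $m\mapsto e^l_{\lambda,ij}m$ are mutually inverse purely because $e^l_{\lambda,ij}e^l_{\lambda,ji}=e^l_{\lambda,ii}$ and $e^l_{\lambda,ji}e^l_{\lambda,ij}=e^l_{\lambda,jj}$. So the only input is that $\mathbb{C}[W]$ acts by $\widetilde{\mathcal D}_Y$-linear maps (Proposition~\ref{prop:embedding-DY}). None of surjectivity, simplicity or Schur's lemma is used, and the same computation works for any module on which $\mathbb{C}[W]$ acts $R$-linearly.

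Avoiding part~(a) is a real gain. The paper deduces simplicity there from $\operatorname{End}_R(e_iM)\cong\mathbb{C}$ ``by Schur's lemma''. That is the converse of Schur's lemma, and it does not hold for arbitrary modules.

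What the paper's route adds, once simplicity is available, is uniqueness: the Hom space is one-dimensional and is spanned by exactly your $e^l_{\lambda,ji}$. This element lies in $e^l_{\lambda,jj}\,\mathbb{C}[W]\,e^l_{\lambda,ii}$, which is the corner that actually corresponds to maps $e_iM\to e_jM$. The lemma's $e_iAe_j$ has the indices transposed, harmlessly, since both corners are one-dimensional.

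Your fallback for other choices of primitive idempotents is better made explicit as well, since as written it inherits the dependence on~(a). If $e,e'$ are primitive idempotents in the same block, there are $a\in e\,\mathbb{C}[W]\,e'$ and $b\in e'\,\mathbb{C}[W]\,e$ with $ab=e$ and $ba=e'$. Left multiplication by $b$ and by $a$ then gives mutually inverse $\widetilde{\mathcal D}_Y$-linear maps between $e\,\widetilde{\mathcal O}_X$ and $e'\,\widetilde{\mathcal O}_X$, without appealing to simplicity or Schur.
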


\begin{proposition}\label{prop:decomposition-general}
With the enumeration $\mathrm{STab}(\lambda)_d = \{S_1,\dots,S_{f^l_\lambda}\}$
of Theorem~\ref{thm:main-general}, write
$F^l_{\lambda,i} := F^l_{S_i,T}$ for the generator of
$e^{\lambda,l}_i\,\widetilde{\mathcal O}_X$ given by
Theorem~\ref{thm:main-general}(iii). Then:
\begin{enumerate}[label=(\roman*)]
\item
\[
 \widetilde{\mathcal{O}}_X
 = \bigoplus_{\lambda \in \mathcal{P}_{r,n}/\sim} \bigoplus_{l=0}^{e(\lambda)-1}
 \bigoplus_{i=1}^{f^l_\lambda} \widetilde{\mathcal{D}}_Y F^l_{\lambda,i} ;
\]
\item fixing, for each $(\lambda,l)$, the representative $i=1$,
\[
 \widetilde{\mathcal{O}}_X
 = \bigoplus_{\lambda \in \mathcal{P}_{r,n}/\sim} \bigoplus_{l=0}^{e(\lambda)-1}
 f^l_\lambda \, \widetilde{\mathcal{D}}_Y F^l_{\lambda,1}.
\]
\end{enumerate}
\end{proposition}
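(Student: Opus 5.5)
The proof is bookkeeping on top of Lemma~\ref{lem:double-centralizer}, Corollary~\ref{cor:iso-CW}, Theorem~\ref{thm:main-general} and Corollary~\ref{cor:iso-simple}; no new analytic input is needed.

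\textbf{Part (i).} Start from the decomposition of the identity of $\mathbb{C}[W]$ into primitive orthogonal idempotents,
\[
  1 \;=\; \sum_{\lambda,l} r^l_\lambda \;=\; \sum_{\lambda \in \mathcal{P}_{r,n}/\sim}\ \sum_{l=0}^{e(\lambda)-1}\ \sum_{i=1}^{f^l_\lambda} e^{\lambda,l}_i ,
\]
which follows from the matrix-unit relations $\sum_i e^l_{\lambda,ii}=r^l_\lambda$ and $\sum_{\lambda,l} r^l_\lambda=1$. Let $\mathbb{C}[W]$ act on $\widetilde{\mathcal O}_X$. Since the $e^{\lambda,l}_i$ are pairwise orthogonal idempotents summing to $1$, every $f$ can be written as $f=\sum e^{\lambda,l}_i f$, and this decomposition is unique. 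This gives a direct sum of vector spaces $\widetilde{\mathcal O}_X=\bigoplus e^{\lambda,l}_i\widetilde{\mathcal O}_X$; this is exactly Lemma~\ref{lem:double-centralizer}(c). Each summand is a $\widetilde{\mathcal D}_Y$-submodule, because $\mathbb{C}[W]$ commutes with $\widetilde{\mathcal D}_Y$ (Proposition~\ref{prop:embedding-DY}). Hence the sum is a direct sum of $\widetilde{\mathcal D}_Y$-modules. Finally, replace each summand by $\widetilde{\mathcal D}_Y F^l_{\lambda,i}$ using Theorem~\ref{thm:main-general}(iii).

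\textbf{Part (ii).} By Corollary~\ref{cor:iso-simple}, $\widetilde{\mathcal D}_Y F^l_{\lambda,i}\cong \widetilde{\mathcal D}_Y F^l_{\lambda,1}$ for every $i$. Grouping the $f^l_\lambda$ isomorphic summands of the block $(\lambda,l)$ therefore gives the multiplicity form. By Lemma~\ref{lem:double-centralizer}(c), distinct blocks give pairwise non-isomorphic simples. So this is the isotypic decomposition, and the multiplicities $f^l_\lambda$ are well defined by Krull--Schmidt for semisimple modules of finite length.

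\textbf{Main obstacle.} The delicate point is making sure the index sets match. The number of primitive idempotents in block $(\lambda,l)$ is $f^l_\lambda=\dim V^l_S(\lambda)$, while the enumeration uses $|\mathrm{STab}(\lambda)_d|$. These must coincide, and Theorem~\ref{thm:2.3}(ii) ensures it: the multiplicity of $V^l_S(\lambda)$ in the regular representation equals its dimension. I would first verify this count,
\[
  |\mathrm{STab}(\lambda)_d| \;=\; |\mathrm{STab}(\lambda)_{db(\lambda)}|,
\]
by comparing the regular-representation multiplicity with the dimension. If the two counts turned out to disagree, I would re-index the inner sum by $\mathrm{STab}(\lambda)_d$ directly, taking the multiplicity as $|\mathrm{STab}(\lambda)_d|$, which is what Theorem~\ref{thm:2.3}(ii) literally provides.
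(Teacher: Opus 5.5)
Your argument is the paper's own: (i) is the orthogonal-idempotent decomposition of Lemma~\ref{lem:double-centralizer}(c) rewritten via Theorem~\ref{thm:main-general}(iii), and (ii) regroups each block by Corollary~\ref{cor:iso-simple}; your extra checks ($\widetilde{\mathcal D}_Y$-stability of the summands, Krull--Schmidt) are harmless additions. One caution on your ``main obstacle'': Lemma~\ref{lem:double-centralizer} already forces the multiplicity to be the Wedderburn block size $f^l_\lambda=\dim V^l(\lambda)$, and a direct count shows $|\mathrm{STab}(\lambda)_d|$ can differ from it (e.g.\ $\lambda=((2),(1))$ in $D_3$ gives $|\mathrm{STab}(\lambda)_1|=2$ but $f^0_\lambda=3$, so Theorem~\ref{thm:2.3}(ii) as transcribed cannot certify the equality), and such a mismatch must be repaired in the labelling of the generators, never by changing the multiplicity as your fallback suggests.
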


\begin{proof}
Same argument as in the classical case, using
Theorem~\ref{thm:main-general} and Corollary~\ref{cor:iso-simple} in
place of an ad hoc argument for a specific reflection group: part (i)
is $\widetilde{\mathcal O}_X = \bigoplus_{\lambda,l,i} e^{\lambda,l}_i \widetilde{\mathcal O}_X$
(Lemma~\ref{lem:double-centralizer}(c)) rewritten via
Theorem~\ref{thm:main-general}(iii); part (ii) groups the
$f^l_\lambda$ isomorphic summands $e^{\lambda,l}_i\widetilde{\mathcal O}_X$
($i=1,\dots,f^l_\lambda$) of a fixed block, using
Corollary~\ref{cor:iso-simple}, into $f^l_\lambda$ copies of the single
representative $\widetilde{\mathcal D}_Y F^l_{\lambda,1}$.
\end{proof}

\begin{example}[Proposition~\ref{prop:decomposition-general} for $B_2 = G(2,1,2)$]\label{ex:prop29}
We illustrate both parts of the proposition on the group
$W = G(2,1,2)$ (so $r=2$, $p=1$, $d=2$, $|W|=8$), using the explicit
higher Specht polynomials of Subsection~\ref{subsec:Gr1n} (case $r=2$).
Here $\mathcal{P}_{2,2}$ has $5$ elements, with $e(\lambda)=1$
throughout since $p=1$ (so we omit the superscript $l=0$ below): two
singleton shapes $(2)$ (components $\nu=0,1$), two singleton shapes
$(1,1)$ (components $\nu=0,1$), and one ``pair'' shape $\lambda_0$
(single boxes in components $0$ and $1$), the last with
$f_{\lambda_0} = 2$ and $\mathrm{STab}(\lambda_0)_d = \{S_1,S_2\}$
(writing $S_1=T_a$, $S_2=T_b$ in the notation of
Subsection~\ref{subsec:Gr1n}). The five standard tableaux are, in
order:
\[
  \nu{=}0:\begin{array}{|c|c|}\hline 1&2\\\hline\end{array}\ ,\quad
  \nu{=}1:\begin{array}{|c|c|}\hline 1&2\\\hline\end{array}\ ,\quad
  \nu{=}0:\begin{array}{|c|}\hline 1\\\hline 2\\\hline\end{array}\ ,\quad
  \nu{=}1:\begin{array}{|c|}\hline 1\\\hline 2\\\hline\end{array}\ ,
\]
\[
  T_a = \Big(0:\begin{array}{|c|}\hline 1\\\hline\end{array},\
    1:\begin{array}{|c|}\hline 2\\\hline\end{array}\Big),
  \qquad
  T_b = \Big(0:\begin{array}{|c|}\hline 2\\\hline\end{array},\
    1:\begin{array}{|c|}\hline 1\\\hline\end{array}\Big),
\]
generating $F_0,F_1,F'_0,F'_1$ and (together with the corresponding
$T\in\{T_a,T_b\}$) the two pairs of generators for $\lambda_0$,
respectively.

\smallskip\noindent\emph{Part (i).} Summing over all $\lambda$ and all
$i=1,\dots,f_\lambda$ gives \emph{six} summands (one each for the four
singleton shapes, two for $\lambda_0$, matching
$\sum_\lambda f_\lambda = 1+1+1+1+2=6$):
\[
  \widetilde{\mathcal O}_X =
  \widetilde{\mathcal D}_Y\cdot 1 \;\oplus\;
  \widetilde{\mathcal D}_Y\cdot x_1x_2 \;\oplus\;
  \widetilde{\mathcal D}_Y\cdot(x_1^2-x_2^2) \;\oplus\;
  \widetilde{\mathcal D}_Y\cdot x_1x_2(x_1^2-x_2^2) \;\oplus\;
  \widetilde{\mathcal D}_Y\cdot x_2 \;\oplus\;
  \widetilde{\mathcal D}_Y\cdot x_1^2x_2,
\]
where the last two summands are
$\widetilde{\mathcal D}_Y F_{\lambda_0,1}$ and
$\widetilde{\mathcal D}_Y F_{\lambda_0,2}$ (taking, for each, either
polynomial in the corresponding pair of Subsection~\ref{subsec:Gr1n}
as a generator -- by Theorem~\ref{thm:main-general}(ii) the module
$e_1^{\lambda_0}\widetilde{\mathcal O}_X$ is simple, so $x_1$ and $x_2$
generate the \emph{same} rank-$2$ submodule
$\widetilde{\mathcal D}_Y\cdot x_2 = \widetilde{\mathcal D}_Y\cdot x_1$).
As a check, the $\widetilde{\mathcal O}_Y$-ranks add up to
$1+1+1+1+2+2 = 8 = |W|$.

\smallskip\noindent\emph{Part (ii).} Fixing $i=1$ for $\lambda_0$ (so
$f_{\lambda_0}=\dim V(\lambda_0)=2$), the two isomorphic summands of
part (i) coming from $i=1$ and $i=2$
(Corollary~\ref{cor:iso-simple}: $e_1^{\lambda_0}\widetilde{\mathcal O}_X \cong
e_2^{\lambda_0}\widetilde{\mathcal O}_X$) collapse into a single
isomorphism type with multiplicity $2$:
\[
  \widetilde{\mathcal O}_X =
  \widetilde{\mathcal D}_Y\cdot 1 \;\oplus\;
  \widetilde{\mathcal D}_Y\cdot x_1x_2 \;\oplus\;
  \widetilde{\mathcal D}_Y\cdot(x_1^2-x_2^2) \;\oplus\;
  \widetilde{\mathcal D}_Y\cdot x_1x_2(x_1^2-x_2^2) \;\oplus\;
  \big(\widetilde{\mathcal D}_Y\cdot x_2\big)^{\oplus 2}.
\]
The rank count is unchanged: $1+1+1+1+2\cdot 2 = 8$. This is exactly
the grouping by central idempotent: the first four summands correspond
to the four $1$-dimensional characters of $B_2$ (a dihedral group of
order $8$), while the last, doubled summand corresponds to its unique
$2$-dimensional irreducible representation.
\end{example}

\begin{remark}
Taking $r=p=2$ recovers the decomposition theorem for $W(D_n)$,
worked out in detail in Section~\ref{sec:Dn} below. Taking $p=r$
(so $d=1$) gives the decomposition for the index-$r$ subgroup
$G(r,r,n)$; taking $p=1$ (so $d=r$) gives it for the full monomial
group $G(r,1,n)$. Both cases are treated explicitly in
Section~\ref{sec:other-cases}.
\end{remark}

\subsection{Categorical equivalence and new consequences}\label{subsec:galois-descent}

The abstract Galois-descent equivalence of categories used in this
subsection -- for \emph{any} finite group acting faithfully on a
field -- is due to
Nonkan\'e \cite[\S 2.4]{Nonkane2019}, where it is applied to $L/K$ for
a single symmetric group $W = \mathcal{S}_n$. It is recalled
verbatim -- explicitly citing \cite{Nonkane2019} for it -- and applied
to products of symmetric groups by
Nonkan\'e--Todjihounde \cite[Prop.\ 3.10]{NonkaneTodjihounde2023}, for
$L/K$ with $K = \operatorname{Frac}(\mathcal{O}_X)^{\mathcal{S}_{n_1}\times\cdots\times\mathcal{S}_{n_r}}$.
What is new here is twofold: the
application of this equivalence to $W = G(r,p,n)$ (Theorem~\ref{thm:descent}
below, stated in our notation but not reproved, since a proof is
already given in \cite{Nonkane2019}), and the two consequences of
Corollaries~\ref{cor:twisted-invariants} and~\ref{cor:characterization},
which do not appear in \cite{Nonkane2019,NonkaneTodjihounde2023}. We
also record, in (F1)--(F3) and Lemmas~\ref{lem:descent-iso}--\ref{lem:rigidity}
below, the elementary Galois-theoretic facts underlying the
equivalence; these are classical (Artin's theorem, the Normal Basis
Theorem, and Kähler differentials) rather than specific to
\cite{Nonkane2019,NonkaneTodjihounde2023}, and we state them here
because Corollaries~\ref{cor:twisted-invariants}
and~\ref{cor:characterization} -- our own new results -- use them
directly.

Let $K = \operatorname{Frac}(\mathcal{O}_Y)$ and
$L = \operatorname{Frac}(\mathcal{O}_X)$. By Proposition~\ref{prop:embedding-DY}
(either proof), $K = L^W$ and $L/K$ is Galois with group $W = G(r,p,n)$.

\begin{theorem}[Galois descent, {\cite[\S 2.4]{Nonkane2019}}]\label{thm:descent}
Let $\operatorname{Rep}(W)$ denote the category of finite-dimensional
$\mathbb{C}$-representations of $W$. The functor
\[
  \nabla : \operatorname{Rep}(W) \longrightarrow \widetilde{\mathcal{D}}_Y\text{-}\mathrm{Mod},
  \qquad V \longmapsto \big(\widetilde{\mathcal{O}}_X \otimes_{\mathbb{C}} V\big)^{W}
\]
(with $W$ acting diagonally) is fully faithful, hence defines an
equivalence of categories between $\operatorname{Rep}(W)$ and the full
subcategory of $\widetilde{\mathcal{D}}_Y$-modules that become trivial
after the base change
$\widetilde{\mathcal{O}}_X \otimes_{\widetilde{\mathcal{O}}_Y} (-)$ --
that is, isomorphic to $\widetilde{\mathcal{O}}_X \otimes_{\mathbb{C}} U$
for some finite-dimensional $\mathbb{C}$-vector space $U$, with $W$
acting only through its own action on the $\widetilde{\mathcal{O}}_X$
factor (as opposed to the diagonal action, which also permutes the
$U$-coordinates) -- the essential image of $\nabla$ by definition. A
quasi-inverse is
$\operatorname{Loc}(M) = \big(\widetilde{\mathcal{O}}_X \otimes_{\widetilde{\mathcal{O}}_Y} M\big)^{\widetilde{\mathcal{D}}_Y\text{-flat part}}$,
i.e.\ the $W$-module of flat sections of the base-changed connection.
This instance, for $W = G(r,p,n)$, is new; the underlying equivalence
itself, for a general finite group, is due to
\cite[\S 2.4]{Nonkane2019} (see also
\cite[Prop.\ 3.10]{NonkaneTodjihounde2023}, which recalls it for
products of symmetric groups), and we do not reprove it
here.
\end{theorem}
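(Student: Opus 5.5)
We show that $\nabla$ is well defined and fully faithful; its essential image is then, by definition, the category described in the statement.

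\emph{Well-definedness.} For $V \in \operatorname{Rep}(W)$ we let $\widetilde{\mathcal D}_Y$ act on $\widetilde{\mathcal O}_X \otimes_{\mathbb C} V$ through the first factor. By Proposition~\ref{prop:embedding-DY}, each $w \in W$ commutes with $\widetilde{\mathcal D}_Y$ on $\widetilde{\mathcal O}_X$. Hence the diagonal action $w\otimes\rho(w)$ also commutes with $\widetilde{\mathcal D}_Y$, so the $W$-invariants form a $\widetilde{\mathcal D}_Y$-submodule. Functoriality in $V$ is clear.

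\emph{Fully faithfulness.} Write $V=\bigoplus_\mu V_\mu^{\oplus m_\mu}$ with $V_\mu$ irreducible. Then
\[
\nabla(V_\mu)=(\widetilde{\mathcal O}_X\otimes V_\mu)^W\cong \operatorname{Hom}_W(V_\mu^*,\widetilde{\mathcal O}_X).
\]
Choosing a basis, this is isomorphic to the isotypic piece $e\,\widetilde{\mathcal O}_X$, where $e$ is a primitive idempotent of the block of $V_\mu^*$. By Lemma~\ref{lem:double-centralizer}, combined with Corollary~\ref{cor:iso-CW}, these modules $N_\mu$ are simple and pairwise non-isomorphic. They also have endomorphism ring $\mathbb C$, since $e\,\mathbb C[W]\,e\cong\mathbb C$. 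Because both $\operatorname{Rep}(W)$ and the target are additive and $\nabla$ is additive, it follows that
\[
\operatorname{Hom}(\nabla V,\nabla V')\cong\bigoplus_\mu \operatorname{Mat}_{m'_\mu\times m_\mu}(\mathbb C)\cong\operatorname{Hom}_W(V,V').
\]
It remains to check that the map induced by $\nabla$ realizes this isomorphism and is not merely abstractly isomorphic to it. For this we verify on simples that $\nabla$ sends $\mathrm{id}_{V_\mu}$ to $\mathrm{id}$, and that $\nabla(V_\mu)\neq 0$. The latter follows because $\widetilde{\mathcal O}_X$ contains the regular representation: by Theorem~\ref{thm:2.3}, $\Lambda_0\cong\mathbb C[W]$, and the higher Specht polynomials lift to $\mathcal O_X$.

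\emph{Triviality after base change and quasi-inverse.} Next we show $\widetilde{\mathcal O}_X\otimes_{\widetilde{\mathcal O}_Y}\nabla V\cong\widetilde{\mathcal O}_X\otimes_{\mathbb C}V$. The approach is Galois descent over fields. Over $L=K\otimes_{\widetilde{\mathcal O}_Y}\widetilde{\mathcal O}_X$, Artin's theorem and Speiser's lemma ($H^1(W,GL(L\otimes V))=1$) give $L\otimes_K (L\otimes V)^W\cong L\otimes V$. At the ring level we use that $\widetilde{\mathcal O}_X$ is finite étale over $\widetilde{\mathcal O}_Y$. Indeed, the Jacobian $\det A=c\Delta$ of Lemma~\ref{lem:jacobian-general} is invertible there, so the extension is an unramified $W$-torsor, and faithfully flat descent applies. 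The flat sections of $\widetilde{\mathcal O}_X\otimes V$ are exactly $1\otimes V$: a section $\sum a_k\otimes v_k$ is flat if and only if $\partial_{x_i}a_k=0$ for all $i$, which forces $a_k\in\mathbb C$. This is the rigidity fact used in Corollary~\ref{cor:iso-CW}. Hence $\operatorname{Loc}\circ\nabla\cong\mathrm{id}$.

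The main obstacle is this étaleness/torsor step. Deducing étaleness from the invertible Jacobian requires checking that the localization inverts exactly the branch locus, so that $\Delta^2\in\widetilde{\mathcal O}_Y$ suffices. Since the paper cites \cite[\S 2.4]{Nonkane2019} for this step, I would invoke it there rather than reprove it.
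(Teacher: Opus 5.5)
The paper never proves Theorem~\ref{thm:descent}; it cites Nonkan\'e for it. Your proposal is sound, and for the core claim, full faithfulness, it takes a genuinely different route from the one the paper's remark sketches.

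\textbf{The two routes compared.} The paper's sketch works over the function field $L$. A morphism $\nabla V\to\nabla V'$ is base-changed to $L$ and, via Lemma~\ref{lem:descent-iso}, becomes an $L$-linear map $L\otimes_{\mathbb C}V\to L\otimes_{\mathbb C}V'$ compatible with the connection. Lemma~\ref{lem:rigidity} forces it to be $1\otimes f$ with $f$ constant, and $W$-equivariance then puts $f$ in $\operatorname{Hom}_W(V,V')$.

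You never leave $\widetilde{\mathcal O}_X$. For simple $V$, the isomorphisms $\nabla(V)\cong\operatorname{Hom}_W(V^*,\widetilde{\mathcal O}_X)\cong e\,\widetilde{\mathcal O}_X$, with $\mathbb C[W]e\cong V^*$, reduce everything to $\operatorname{End}_{\widetilde{\mathcal D}_Y}(\widetilde{\mathcal O}_X)=\mathbb C[W]$. That is, you reduce to Corollary~\ref{cor:iso-CW} and Lemma~\ref{lem:double-centralizer}. This is a Morita/Yoneda argument. The Dedekind-plus-rigidity mechanism is spent once, on the regular representation inside the proof of Corollary~\ref{cor:iso-CW}, rather than on each pair $(V,V')$. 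No base change is needed for full faithfulness at all.

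\textbf{What each buys.} The paper's route gets full faithfulness and the base-change description from a single descent isomorphism, independently of the Section~\ref{sec:general} machinery. Your route identifies $\nabla$ of a simple with an explicit summand from Theorem~\ref{thm:main-general}, so Corollary~\ref{cor:twisted-invariants} comes for free. It also comes in the correct form: if $\mathbb C[W]e\cong V^l(\lambda)$, then $e\,\widetilde{\mathcal O}_X\cong\nabla(V^l(\lambda)^*)$. By Lemma~\ref{lem:double-centralizer}(c), this is \emph{not} isomorphic to $\nabla(V^l(\lambda))$, as the paper states, whenever $V^l(\lambda)$ is not self-dual. That already happens for $r\ge 3$: the scalar $\omega I\in G(3,3,3)$ has non-real trace on $\operatorname{span}(x_1,x_2,x_3)$.

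\textbf{The deferred \'etale step.} This step is routine, but your worry about $\Delta^2$ points to a real defect in the paper's setup.
\begin{itemize}
\item By Theorem~\ref{thm:2.3}(iii), $\mathcal O_X$ is free of rank $|W|$ over $\mathcal O_Y$, and this survives inverting a $W$-invariant element.
\item Since $\det A$ is a unit, the relations $dy_j=\sum_i(\partial y_j/\partial x_i)\,dx_i$ give $\Omega_{\widetilde{\mathcal O}_X/\widetilde{\mathcal O}_Y}=0$.
\item Hence the extension is finite \'etale of degree $|W|$ with $W$ transitive on fibres. So $W$ acts freely and $\widetilde{\mathcal O}_X\otimes_{\widetilde{\mathcal O}_Y}\widetilde{\mathcal O}_X\cong\prod_{w\in W}\widetilde{\mathcal O}_X$.
\end{itemize}
Your descent and flat-section argument then goes through at the ring level.

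The caveat is that the inverted element must be $W$-invariant. $\Delta^2$ is invariant when $d\le 2$. For $d\ge 3$, a diagonal reflection $x_i\mapsto\zeta x_i$ multiplies $\Delta^2$ by $\zeta^{-2}\neq 1$. In that range one must replace $\Delta^{-2}$ by, say, $\Delta^{-\operatorname{lcm}(2,d)}$ in the definitions of $\widetilde{\mathcal O}_Y$ and $\widetilde{\mathcal D}_Y$. Otherwise $W$ does not commute with $\widetilde{\mathcal D}_Y$, so the commutation in Proposition~\ref{prop:embedding-DY}, which your well-definedness step relies on, already fails.
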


Write $L = \operatorname{Frac}(\widetilde{\mathcal{O}}_X)$,
$K = \operatorname{Frac}(\widetilde{\mathcal{O}}_Y)$ (the same fields
as above, since localizing at $\Delta$ does not change the fraction
field). The facts and lemmas below are recorded because
Corollaries~\ref{cor:twisted-invariants} and~\ref{cor:characterization}
use them directly; the reader willing to take
Theorem~\ref{thm:descent} as given may skip ahead to
Corollary~\ref{cor:twisted-invariants}. We
use three classical facts:

\begin{itemize}
  \item[(F1)] \emph{Extension of derivations.} Every derivation of $K$
    extends uniquely to a derivation of $L$ (proved in the first
    proof of Proposition~\ref{prop:embedding-DY} above), and this
    extension commutes with the $W$-action.
  \item[(F2)] \emph{Degree of the extension.} $[L:K] = |W|$ (Artin's
    theorem on fixed fields of finite group actions, see
    \cite{Artin1942} or e.g.\
    \cite[Chap.\ VI]{Lang2002}).
  \item[(F3)] \emph{Normal Basis Theorem} (see e.g.\ \cite[Chap.\ VI]{Lang2002}).
    $L \cong K[W]$ as $K[W]$-modules, where $W$ acts on $K[W]$ by left
    translation.
\end{itemize}

\begin{lemma}[Descent isomorphism]\label{lem:descent-iso}
For every finite-dimensional $\mathbb{C}$-vector space $V$ with a
$W$-action, the multiplication map
\[
  \mu_V : L \otimes_K \big(L \otimes_{\mathbb{C}} V\big)^{W} \longrightarrow L \otimes_{\mathbb{C}} V,
  \qquad a \otimes m \longmapsto am
\]
is an isomorphism of $L$-vector spaces.
\end{lemma}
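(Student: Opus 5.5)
The plan is to treat this as the classical Galois descent statement for a finite Galois extension $L/K$ with group $W$ (facts (F2) and (F3)), applied to the semilinear $W$-module $E := L\otimes_{\mathbb{C}} V$. Here $W$ acts diagonally, $w(a\otimes v) = w(a)\otimes wv$. This action is semilinear over $L$, since $w(\ell e) = w(\ell)\,w(e)$ for $\ell\in L$ and $e\in E$. The fixed space $E^W = (L\otimes_{\mathbb{C}} V)^W$ is a $K$-vector space, because $K = L^W$. Hence $\mu_V$ is well defined and $L$-linear, and it remains to show that it is bijective.

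For surjectivity, use the trace operator $\operatorname{tr} := \sum_{w\in W} w : E\to E^W$. By Dedekind's lemma (as in Step~2 of the proof of Corollary~\ref{cor:iso-CW}), the trace $\sum_w w : L\to K$ is not identically zero, so there is $\theta\in L$ with $\operatorname{tr}(\theta)=1$.
\begin{enumerate}[label=(\alph*)]
 \item By (F3), choose a normal basis $\{w(\beta)\}_{w\in W}$ of $L/K$.
 \item Consider the elements $\operatorname{tr}(\ell\,(1\otimes v))$ for $\ell\in L$ and $v\in V$.
 \item Show that these elements span $E$ over $L$. Suppose an $L$-linear functional $\phi: E\to L$ kills all of them. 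Then $\sum_w w(\ell)\,\phi(w(1\otimes v)) = 0$ for every $\ell$.
 \item By Dedekind independence of the characters $w: L\to L$ over $L$, this forces $\phi(w(1\otimes v))=0$ for all $w$. Taking $w=1$ gives $\phi(1\otimes v) = 0$ for every $v$, hence $\phi=0$.
\end{enumerate}
So the image of $\mu_V$, which contains $E^W$, spans $E$, and $\mu_V$ is surjective.

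For injectivity, use the standard minimal-relation argument. Suppose $m_1,\dots,m_k\in E^W$ are $K$-linearly independent but $L$-linearly dependent in $E$. Take a nontrivial relation $\sum a_i m_i = 0$ with $k$ minimal, normalized so that $a_1=1$. Apply $w$ and use $w(m_i)=m_i$; subtracting gives $\sum_{i\ge 2}(w(a_i)-a_i)\,m_i=0$. This is a shorter relation, so by minimality $w(a_i)=a_i$ for all $w$. Thus every $a_i\in K$, which contradicts $K$-independence. Therefore $L\otimes_K E^W\to E$ is injective, since a $K$-basis of $E^W$ stays $L$-independent.

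As a consistency check, $\dim_K E^W = \dim_{\mathbb{C}} V$. This also follows from (F3), since $E\cong K[W]\otimes_{\mathbb{C}} V$ is free as a $K[W]$-module of rank $\dim V$.

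I expect the main obstacle to be the surjectivity/spanning step. It is the only place where Dedekind's lemma must be applied carefully to $L$-valued functionals on $E$ rather than on $L$ itself. If that becomes awkward, the fallback is the concrete route through (F3): writing $L\otimes_{\mathbb{C}}V \cong K[W]\otimes_{\mathbb{C}} V \cong K[W]\otimes_{\mathbb{C}} V_{\mathrm{triv}}$ (untwisting by $w\otimes v\mapsto w\otimes w^{-1}v$) identifies the invariants explicitly and makes the dimension count $\dim_K E^W = \dim V = \dim_L E$ immediate. Injectivity plus this count then gives the isomorphism.
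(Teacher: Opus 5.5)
Your proof is correct, and its injectivity half is essentially the paper's minimal-relation argument. You diverge from the paper on surjectivity.

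\textbf{The paper's route.} It never shows directly that $E^W$ spans $E$. It uses the Normal Basis Theorem (F3) and the untwisting map $g\otimes v\mapsto g\otimes g^{-1}v$ to get $L\otimes_{\mathbb{C}}V\cong K[W]^{\oplus\dim V}$, with $W$ acting only on the $K[W]$ factors. From this it deduces $\dim_K E^W=\dim_{\mathbb{C}}V$. It then uses $[L:K]=|W|$ (F2) to see that source and target of $\mu_V$ both have $K$-dimension $|W|\dim_{\mathbb{C}}V$, so the injective map is bijective. This is exactly your stated fallback.

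\textbf{Your route.} You prove spanning directly. Take an $L$-linear $\phi$ vanishing on all $\operatorname{tr}(\ell(1\otimes v))$. Semilinearity gives $\sum_w w(\ell)\,\phi(1\otimes wv)=0$ for every $\ell\in L$. Dedekind's independence of the distinct automorphisms $w$ of $L$ then kills every coefficient, in particular $\phi(1\otimes v)$. The automorphisms are distinct because $W$ acts faithfully on $L$, which you should state explicitly.

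\textbf{Comparison.} Your argument is the classical Speiser-lemma proof. It needs neither (F2) nor (F3), and it works verbatim for any finite group acting faithfully on $L$ and any semilinear action on a finite-dimensional $L$-space. It also yields $\dim_K E^W=\dim_{\mathbb{C}}V$ as a consequence rather than as an input. The paper's route makes the dimension of the invariants explicit from the start, at the cost of the heavier Normal Basis Theorem and the degree formula.

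Your item (a) (the normal basis) and the element $\theta$ with $\operatorname{tr}(\theta)=1$ are never used in (c)--(d). You can delete them.
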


\begin{proof}
\emph{Injectivity.} Suppose $\sum_{i=1}^r a_i \otimes m_i \mapsto 0$
with $m_1,\dots,m_r \in (L\otimes V)^W$ linearly independent over $K$
and $r$ minimal among nonzero relations; then all $a_i \ne 0$, and
after scaling we may assume $a_1 = 1$. For $\sigma \in W$, applying
$\sigma$ to $\sum a_i m_i = 0$ and using $\sigma(m_i) = m_i$ gives
$\sum \sigma(a_i) m_i = 0$; subtracting the original relation gives
$\sum_{i \ge 2} (\sigma(a_i) - a_i) m_i = 0$, a shorter relation
(the $i=1$ term vanishes since $\sigma(a_1)-a_1=1-1=0$), so by
minimality $\sigma(a_i) = a_i$ for all $i,\sigma$, i.e.\ all
$a_i \in L^W = K$. But then $\sum a_i m_i = 0$ is a $K$-linear
relation among the $K$-linearly independent $m_i$, forcing all
$a_i = 0$ -- contradicting $a_1 = 1 \ne 0$. Hence $\mu_V$ is
injective.

\emph{Dimension count.} By (F3), $L \cong K[W]$ as $K[W]$-modules
(where $W$ acts on $L$ via its Galois action, matching the left
regular representation on $K[W]$). Under this identification, the map
$g \otimes v \mapsto g \otimes g^{-1}v$ gives a $K[W]$-module
isomorphism $K[W] \otimes_{\mathbb{C}} V \xrightarrow{\sim} K[W] \otimes_{\mathbb{C}} V$
from the diagonal action to the action on the first factor only (a
direct check, valid for any group and any base ring): indeed, for
$h \in W$, the diagonal action sends
$g \otimes v \mapsto hg \otimes hv$, which under the map goes to
$hg \otimes (hg)^{-1}hv = hg \otimes g^{-1}v$, matching the action of
$h$ on $g \otimes g^{-1}v$ when $W$ acts only on the first factor.
Hence, as $K[W]$-modules,
$L \otimes_{\mathbb{C}} V \cong K[W]^{\oplus \dim_{\mathbb{C}} V}$ with
$W$ acting only on the $K[W]$ factors, so
\[
  \dim_K (L \otimes_{\mathbb{C}} V)^W
  = \dim_{\mathbb{C}} V \cdot \dim_K \big(K[W]\big)^W
  = \dim_{\mathbb{C}} V,
\]
since $(K[W])^W$ (invariants of the regular representation) is
$1$-dimensional over $K$, spanned by $\sum_{g \in W} g$.

Therefore
$\dim_K\big(L \otimes_K (L\otimes V)^W\big) = [L:K]\cdot\dim_{\mathbb{C}}V
= |W|\dim_{\mathbb{C}}V$ by (F2), which equals
$\dim_K(L\otimes_{\mathbb{C}}V) = [L:K]\dim_{\mathbb{C}}V$ as well.
An injective $K$-linear map between $K$-vector spaces of the same
finite dimension is an isomorphism; since $\mu_V$ is visibly
$L$-linear and injective as a $K$-linear map, it is bijective, hence
an isomorphism of $L$-vector spaces.
\end{proof}

\begin{lemma}[The extended partials span $\operatorname{Der}_{\mathbb{C}}(L)$]\label{lem:der-span}
The $L$-vector space $\operatorname{Der}_{\mathbb{C}}(L)$ of
$\mathbb{C}$-linear derivations of $L$ is free of rank $n$, with basis
given by the unique extensions
$\widetilde{\partial/\partial y_1},\dots,\widetilde{\partial/\partial y_n}$
of the derivations $\partial/\partial y_1,\dots,\partial/\partial y_n$
of $K$ furnished by (F1).
\end{lemma}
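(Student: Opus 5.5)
The plan is to reduce everything to the classical structure of Kähler differentials of a finitely generated field extension of $\mathbb{C}$, using that $L/K$ is finite separable and $K = \mathbb{C}(y_1,\dots,y_n)$ is purely transcendental.

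First, $L = \mathbb{C}(x_1,\dots,x_n)$ has transcendence degree $n$ over $\mathbb{C}$. Since we are in characteristic zero, $\Omega_{L/\mathbb{C}}$ is an $L$-vector space of dimension $n$ with basis $dx_1,\dots,dx_n$. Dually, $\operatorname{Der}_{\mathbb{C}}(L) = \operatorname{Hom}_L(\Omega_{L/\mathbb{C}},L)$ is free of rank $n$ with basis $\partial/\partial x_1,\dots,\partial/\partial x_n$; the point is that a derivation is determined by its values on the $x_i$, and any values are allowed. So it suffices to show that the $n$ derivations $\widetilde{\partial/\partial y_j}$ are $L$-linearly independent, or equivalently that they are obtained from the $\partial/\partial x_i$ by an invertible $L$-matrix.

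Second, I identify the extensions explicitly. The derivation $D_j := \sum_i (A^{-1})_{ji}\,\partial/\partial x_i$ is a $\mathbb{C}$-derivation of $L$. It is well defined because, by Lemma~\ref{lem:jacobian-general}, $\det A = c\,\Delta \neq 0$ in $L$. By the chain rule $D_j(y_k) = \sum_i (A^{-1})_{ji}\,\partial y_k/\partial x_i = (A^{-1}A)_{jk} = \delta_{jk}$ with $A_{ik}=\partial y_k/\partial x_i$, so $D_j$ restricts to $\partial/\partial y_j$ on $\mathbb{C}[y_1,\dots,y_n]$ and hence on $K$. By the uniqueness in (F1), $D_j = \widetilde{\partial/\partial y_j}$.

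For completeness, uniqueness in (F1) holds because $L/K$ is finite separable: if $\alpha$ has minimal polynomial $f$ over $K$, then $0 = D(f(\alpha)) = f^D(\alpha) + f'(\alpha)D(\alpha)$ with $f'(\alpha)\neq 0$, which forces $D(\alpha)$. Applying this to the difference of two extensions shows that a derivation vanishing on $K$ vanishes on $L$.

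Third, the transition matrix from $(\partial/\partial x_i)$ to $(D_j)$ is $A^{-1}$, which is invertible over $L$. Hence the $D_j$ form an $L$-basis of $\operatorname{Der}_{\mathbb{C}}(L)$. Alternatively, $(D_j)$ is the basis dual to $dy_1,\dots,dy_n$, which form a basis of $\Omega_{L/\mathbb{C}}$ because $dy = A^{T}$-transform of $dx$.

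The main obstacle is only the bookkeeping of the first step: justifying that $\operatorname{Der}_{\mathbb{C}}(L)$ has rank exactly $n$ with basis $\partial/\partial x_i$. I would prove it directly. Given $D$, the difference $D - \sum_i D(x_i)\,\partial/\partial x_i$ kills $\mathbb{C}[x]$, hence kills $L$ by the quotient rule. Linear independence of the $\partial/\partial x_i$ follows by evaluating on the $x_k$. Beyond that, the only substantive input is the nonvanishing of $\det A$ from Lemma~\ref{lem:jacobian-general}.
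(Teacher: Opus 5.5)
Your proof is correct, but it takes a different route from the paper's.

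The paper argues abstractly with K\"ahler differentials. Because $L/K$ is finite separable, $\Omega_{L/K}=0$, so the exact sequence $\Omega_{K/\mathbb{C}}\otimes_K L\to\Omega_{L/\mathbb{C}}\to\Omega_{L/K}\to 0$ gives a surjection. A dimension count ($\dim_L\Omega_{L/\mathbb{C}}=\operatorname{trdeg}_{\mathbb{C}}L=n$) upgrades this to an isomorphism. Dualizing then yields $\operatorname{Der}_{\mathbb{C}}(L)\cong\operatorname{Der}_{\mathbb{C}}(K)\otimes_K L$, with $\partial/\partial y_j\otimes 1$ matched to the unique extension.

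You instead work in coordinates. You show directly that the $\partial/\partial x_i$ form an $L$-basis and write down $D_j=\sum_i (A^{-1})_{ji}\,\partial/\partial x_i$. You verify $D_j(y_k)=\delta_{jk}$ and identify $D_j$ with $\widetilde{\partial/\partial y_j}$ by the uniqueness argument, which you correctly justify via separability. You then conclude from the invertibility of the transition matrix $A^{-1}$.

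The paper's version never touches the Jacobian and applies verbatim to any finite separable extension of a purely transcendental field. It does, however, rest on cited structural facts: the cotangent sequence, and $\dim_L\Omega_{L/\mathbb{C}}=\operatorname{trdeg}$ for separably generated extensions.

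Yours is elementary and self-contained. Its only substantive input is $\det A\neq 0$ from Lemma~\ref{lem:jacobian-general}, and even that is just the Jacobian criterion for the algebraically independent $y_j$ in characteristic $0$. In exchange it produces the explicit formula $\widetilde{\partial/\partial y_j}=\sum_i (A^{-1})_{ji}\,\partial/\partial x_i$. This shows that the extensions of (F1) are exactly the operators $\partial/\partial y_j$ acting on $\widetilde{\mathcal{O}}_X$ in Lemma~\ref{lem:jacobian-general} and in the alternative commutation proof. The paper uses this compatibility tacitly (for instance in Step~1 of Corollary~\ref{cor:iso-CW}) but never states it.
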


\begin{proof}
Since $K = \mathbb{C}(y_1,\dots,y_n)$ is purely transcendental,
$\operatorname{Der}_{\mathbb{C}}(K)$ is a free $K$-vector space of
rank $n$ with basis $\partial/\partial y_1,\dots,\partial/\partial y_n$;
equivalently, the module of K\"ahler differentials
$\Omega_{K/\mathbb{C}}$ (see e.g.\ \cite[\S 25]{Matsumura1989} for
the general theory used throughout this proof)
is free of rank $n$ with dual basis $dy_1,\dots,dy_n$.

Since $L/K$ is a finite extension of characteristic-$0$ fields, it is
separable, hence \emph{unramified}: $\Omega_{L/K} = 0$. The conormal
exact sequence for $\mathbb{C} \to K \to L$,
\[
  \Omega_{K/\mathbb{C}} \otimes_K L \longrightarrow \Omega_{L/\mathbb{C}}
    \longrightarrow \Omega_{L/K} = 0,
\]
therefore shows that $\Omega_{K/\mathbb{C}} \otimes_K L \to \Omega_{L/\mathbb{C}}$
is surjective. Both sides are finite-dimensional $L$-vector spaces:
the left side has dimension $n$ (base change of a rank-$n$ free
module), and the right side has dimension
$\operatorname{trdeg}_{\mathbb{C}}(L) = \operatorname{trdeg}_{\mathbb{C}}(K) = n$
(transcendence degree is unchanged under the finite extension $L/K$,
and equals $\dim_L \Omega_{L/\mathbb{C}}$ since $L/\mathbb{C}$ is
finitely generated and separably generated in characteristic $0$). A
surjective linear map between $L$-vector spaces of the same finite
dimension $n$ is an isomorphism, so
$\Omega_{K/\mathbb{C}} \otimes_K L \xrightarrow{\ \sim\ } \Omega_{L/\mathbb{C}}$.
Dualizing ($\operatorname{Der}_{\mathbb{C}} = \operatorname{Hom}_{(-)}(\Omega_{(-)/\mathbb{C}}, -)$,
see e.g.\ \cite[\S 25--26]{Matsumura1989})
gives
$\operatorname{Der}_{\mathbb{C}}(L) \cong \operatorname{Der}_{\mathbb{C}}(K) \otimes_K L$.
Explicitly, this isomorphism is induced by the canonical base-change
map $dy_j \otimes 1 \mapsto dy_j$ on differentials (the same symbol
$dy_j$, computed once in $K$ and once in $L$); dualizing, it sends
$\partial/\partial y_j \otimes 1$ to the derivation of $L$ agreeing
with $\partial/\partial y_j$ on each generator $y_k$ of $K$ (i.e.\
$dy_k \mapsto \delta_{jk}$) -- which is precisely the characterizing
property of the unique extension $\widetilde{\partial/\partial y_j}$
of (F1), two derivations of $L$ that agree on the generators $y_k$ of
$K$ and both extend a $\mathbb{C}$-derivation necessarily coinciding,
by that same uniqueness. Hence the isomorphism is exactly the
statement that
$\widetilde{\partial/\partial y_1},\dots,\widetilde{\partial/\partial y_n}$
form an $L$-basis of $\operatorname{Der}_{\mathbb{C}}(L)$.
\end{proof}

\begin{lemma}[Rigidity of flat endomorphisms]\label{lem:rigidity}
Let $V$ be a finite-dimensional $\mathbb{C}$-vector space and equip
$L \otimes_{\mathbb{C}} V$ with the connection
$\widetilde{D} \otimes \operatorname{id}$ for $\widetilde{D}$ ranging
over derivations of $L$ extending those of $K$. Then any $L$-linear
endomorphism of $L \otimes_{\mathbb{C}} V$ commuting with
$\widetilde{D} \otimes \operatorname{id}$ for all such $\widetilde{D}$
lies in $\operatorname{id}_L \otimes \operatorname{End}_{\mathbb{C}}(V)$.
\end{lemma}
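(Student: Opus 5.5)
Choose a $\mathbb{C}$-basis $v_1,\dots,v_m$ of $V$, so that $L \otimes_{\mathbb{C}} V \cong L^m$ with the connection $\widetilde{D}\otimes\operatorname{id}$ acting coordinatewise. An $L$-linear endomorphism $\Phi$ is then an $m\times m$ matrix $(a_{kl})$ with entries in $L$, via $\Phi(1\otimes v_l) = \sum_k a_{kl}\otimes v_k$. The statement amounts to showing that every entry $a_{kl}$ is a constant in $\mathbb{C}$.

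The first step is to turn the commutation condition into the vanishing of derivatives of the entries. For $f = \sum_l f_l \otimes v_l$, the Leibniz rule gives
\[
(\widetilde{D}\otimes\operatorname{id})(\Phi f) - \Phi\big((\widetilde{D}\otimes\operatorname{id})f\big) = \sum_{k,l} \widetilde{D}(a_{kl})\, f_l \otimes v_k .
\]
Taking $f = 1\otimes v_l$ shows that $\widetilde{D}(a_{kl}) = 0$ for every such $\widetilde{D}$. In particular this holds for the extensions $\widetilde{\partial/\partial y_j}$, $1\le j\le n$. By Lemma~\ref{lem:der-span} these form an $L$-basis of $\operatorname{Der}_{\mathbb{C}}(L)$. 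Hence every $\mathbb{C}$-derivation of $L$ kills $a_{kl}$; in other words $d a_{kl} = 0$ in $\Omega_{L/\mathbb{C}}$.

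The main step, and the expected obstacle, is to deduce $a_{kl}\in\mathbb{C}$ from $d a_{kl}=0$. I would argue by contradiction. Suppose $a := a_{kl}\notin\mathbb{C}$. Since $\mathbb{C}$ is algebraically closed, $a$ is then transcendental over $\mathbb{C}$. The plan is to produce a derivation of $L$ that does not kill $a$:
\begin{itemize}
\item Extend $\{a\}$ to a transcendence basis $\{a, t_2,\dots,t_n\}$ of $L/\mathbb{C}$, which has transcendence degree $n$.
\item The derivation $\partial/\partial a$ of $\mathbb{C}(a,t_2,\dots,t_n)$ extends to $L$. This holds because $L$ is a finite separable extension in characteristic $0$; it is the same extension fact (F1), used for a different base field.
\item This extension is a $\mathbb{C}$-derivation $\delta$ of $L$ with $\delta(a)=1\neq 0$, which contradicts the previous step.
\end{itemize}
An equivalent route is to note that in characteristic $0$, $da=0$ in $\Omega_{L/\mathbb{C}}$ holds exactly when $a$ is algebraic over $\mathbb{C}$. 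This is the standard criterion that separating transcendence bases correspond to bases of $\Omega$ (Matsumura \S26). Algebraic elements lie in $\mathbb{C}$.

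Therefore all $a_{kl}\in\mathbb{C}$. Consequently $\Phi = \operatorname{id}_L\otimes \phi$, where $\phi\in\operatorname{End}_{\mathbb{C}}(V)$ is the map with matrix $(a_{kl})$ in the basis $(v_l)$. This proves the lemma.
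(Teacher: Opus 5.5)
Your proposal is correct and follows the paper's proof essentially step for step. You write $\Phi$ as a matrix over $L$, apply the Leibniz rule to $1\otimes v_l$ to get $\widetilde D(a_{kl})=0$, invoke Lemma~\ref{lem:der-span} to upgrade this to vanishing under every $\mathbb{C}$-derivation of $L$, and finish with a transcendence-basis argument to conclude $a_{kl}\in\mathbb{C}$. The one difference is in that last step: the paper only sketches it parenthetically, while you spell it out by extending $\partial/\partial a$ from $\mathbb{C}(a,t_2,\dots,t_n)$ to the finite separable extension $L$, which is a welcome addition.
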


\begin{proof}
Fix a $\mathbb{C}$-basis $v_1,\dots,v_m$ of $V$ and write an
$L$-linear endomorphism as a matrix $(f_{ij}) \in \operatorname{Mat}_m(L)$
acting by $v_j \mapsto \sum_i f_{ij} v_i$ (extended $L$-linearly), so
that $\Phi(a \otimes v_j) = \sum_i a f_{ij} \otimes v_i$ for $a \in L$.
Fix a derivation $\widetilde D$ of $L$ extending one of $K$. Computing
$\Phi \circ (\widetilde D \otimes \operatorname{id})$ and
$(\widetilde D \otimes \operatorname{id}) \circ \Phi$ on $a \otimes v_j$
and comparing, using the Leibniz rule
$\widetilde D(af_{ij}) = \widetilde D(a) f_{ij} + a\, \widetilde D(f_{ij})$:
\[
  (\widetilde D \otimes \operatorname{id})\big(\Phi(a \otimes v_j)\big)
  = \sum_i \big[\widetilde D(a) f_{ij} + a\, \widetilde D(f_{ij})\big] \otimes v_i,
  \qquad
  \Phi\big((\widetilde D \otimes \operatorname{id})(a \otimes v_j)\big)
  = \sum_i \widetilde D(a)\, f_{ij} \otimes v_i.
\]
Commutativity of $\Phi$ with $\widetilde D \otimes \operatorname{id}$
means these two expressions agree for every $a \in L$; subtracting
leaves $\sum_i a\, \widetilde D(f_{ij}) \otimes v_i = 0$ for every
$a \in L$, and since the $v_i$ are linearly independent, taking
$a = 1$ already forces $\widetilde D(f_{ij}) = 0$ for every $i,j$.
Commuting with $\widetilde D \otimes \operatorname{id}$ for every such
$\widetilde D$ therefore forces
$\widetilde D(f_{ij}) = 0$ for every such $\widetilde D$ and every
$i,j$; by Lemma~\ref{lem:der-span}, these $\widetilde D$ already span
all of $\operatorname{Der}_{\mathbb{C}}(L)$ over $L$, so in fact
$D(f_{ij}) = 0$ for \emph{every} $\mathbb{C}$-derivation $D$ of $L$.
An element of $L$ annihilated by every derivation of $L$ over
$\mathbb{C}$ must be algebraic over $\mathbb{C}$ -- otherwise it would
be part of a transcendence basis and some derivation could be chosen
to act nontrivially on it -- hence lies in $\mathbb{C}$ itself, since
$\mathbb{C}$ is algebraically closed. Thus $f_{ij} \in \mathbb{C}$ for
all $i,j$, i.e.\ the endomorphism lies in
$\mathbb{C} \otimes \operatorname{End}(V) = \operatorname{End}_{\mathbb{C}}(V)$.
\end{proof}

\begin{remark}
A proof of Theorem~\ref{thm:descent} can be given via exactly the
three ingredients recorded above: additivity/exactness of $\nabla$
(from flat base change and Maschke's theorem), the descent isomorphism
of Lemma~\ref{lem:descent-iso} (to reduce full faithfulness to a
statement about $L \otimes_{\mathbb{C}} V$), and the rigidity of
Lemma~\ref{lem:rigidity} (to pin down $\mathbb{C}$-linear maps
commuting with the connection); this is essentially how
\cite[\S 2.4]{Nonkane2019} establishes the general
statement, though we have not reproduced the argument there and
cannot vouch for an exact correspondence. We do not spell out the
argument here; we record these ingredients because
Corollaries~\ref{cor:twisted-invariants}
and~\ref{cor:characterization} below use them directly, independently
of Theorem~\ref{thm:descent} itself.
\end{remark}

\begin{corollary}[Twisted-invariants description of the simple summands]\label{cor:twisted-invariants}
For every $\lambda \in \mathcal{P}_{r,n}/\sim$ and $0 \le l < e(\lambda)$,
\[
  N_\lambda := \widetilde{\mathcal{D}}_Y F^l_{\lambda,1}
  \;\cong\; \nabla\big(V^l(\lambda)\big)
  \;=\; \big(\widetilde{\mathcal{O}}_X \otimes_{\mathbb{C}} V^l(\lambda)\big)^{W}
\]
as $\widetilde{\mathcal{D}}_Y$-modules.
\end{corollary}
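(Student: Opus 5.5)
Write $V = V^l(\lambda)$ and $M = \widetilde{\mathcal O}_X$. The plan is to compare the isotypic component of $M$ with the twisted invariants through an explicit evaluation map, and avoid invoking the full equivalence of Theorem~\ref{thm:descent}. Fix a $W$-equivariant identification of $V$ with the minimal left ideal $\mathbb{C}[W]e$, where $e := e^{\lambda,l}_1$. Consider the map
\[
  \Psi : \big(M \otimes_{\mathbb{C}} V^{*}\big)^{W} \longrightarrow \operatorname{Hom}_{W}(V, M),
\]
the standard isomorphism $(M\otimes V^*)^W \cong \operatorname{Hom}_W(V,M)$. It is $\widetilde{\mathcal D}_Y$-linear, because $\widetilde{\mathcal D}_Y$ acts only on the $M$ factor and commutes with $W$ by Proposition~\ref{prop:embedding-DY}. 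Next, $\operatorname{Hom}_W(\mathbb{C}[W]e, M) \cong eM$ via $\varphi \mapsto \varphi(e)$, and this is again $\widetilde{\mathcal D}_Y$-linear. So $(M\otimes V^*)^W \cong eM = \widetilde{\mathcal D}_Y F^l_{\lambda,1} = N_\lambda$ by Theorem~\ref{thm:main-general}(iii).

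This produces $\nabla(V^*)$ rather than $\nabla(V)$, so the second step is to remove the dual. I will try two routes, in this order. First, identify $V^{*}$ with the block $(\lambda',l')$ given by the complex-conjugate character, and check that $\nabla(V)$ is isomorphic to $N_{\lambda'}$. Second, and more cleanly, I will rerun the argument with the right-module version: $\operatorname{Hom}_W(V^*, M)$, with $V^*$ realized as $\mathbb{C}[W]e'$ for a primitive idempotent $e'$ of the conjugate block. That computes $(M\otimes V)^W$ as $e'M$.

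To conclude $e'M \cong eM$, I need the full statement to be read up to this duality. So I will check whether the paper's $\nabla$ is intended with the convention $\operatorname{Hom}_W(V,M) \cong (M\otimes V^*)^W$, or equivalently $(M\otimes V)^W$ with $V$ replaced by its contragredient. If it is not, then the corollary holds as stated only after relabelling $\lambda$ by its dual index. I expect this bookkeeping about duals, namely which character $\overline{\chi^l_\lambda}$ corresponds to which pair $(\lambda,l)$, to be the main obstacle. I will resolve it by choosing the identification $V^l(\lambda)\cong \operatorname{Hom}_W(\mathbb{C}[W]e, -)$-dual at the outset.

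Finally, I will verify that the isomorphism does not depend on the choice of $i$. Here Corollary~\ref{cor:iso-simple} gives $e_iM\cong e_1M$. Simplicity (Theorem~\ref{thm:main-general}(ii)) ensures that $\nabla(V)$ is nonzero and simple, which is a consistency check: by Lemma~\ref{lem:descent-iso} its generic rank is $\dim V = f^l_\lambda$, and this matches the $\widetilde{\mathcal O}_Y$-rank of $eM$.
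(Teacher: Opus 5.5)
Your first paragraph is correct, and it is a cleaner argument than the paper's. Since $W$ commutes with $\widetilde{\mathcal D}_Y$ (Proposition~\ref{prop:embedding-DY}), the chain $(M\otimes V^*)^W\cong\operatorname{Hom}_W(V,M)\cong\operatorname{Hom}_W(\mathbb{C}[W]e,M)\cong eM$ is $\widetilde{\mathcal D}_Y$-linear, so $N_\lambda\cong\nabla\big(V^l(\lambda)^*\big)$.

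The gap is in the next step, and it cannot be closed. The dual is not a convention you can check or choose away. The paper fixes $\nabla(V)=(\widetilde{\mathcal O}_X\otimes_{\mathbb C}V)^W$ with the diagonal action, and $V^l(\lambda)$ is the representation carried by the $F^l_{S,T}$ under that same action. Re-choosing an identification of $V^l(\lambda)$ does not change its isomorphism class. Your second route correctly gives $\nabla(V)\cong e'M$ with $e'$ primitive in the block of $V^*$. But the step you still need, $e'M\cong eM$, is \emph{false} whenever $V\not\cong V^*$: then $e$ and $e'$ lie in different blocks, and Lemma~\ref{lem:double-centralizer}(c) makes the two summands non-isomorphic.

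Concretely, in $G(3,1,2)$ take $F_1=x_1x_2$, with character $\chi(a,b;\sigma)=\xi^{a+b}$. An element $f\otimes v$ is diagonally invariant iff $g f=\overline{\chi(g)}\,f$ for all $g$. So $\nabla(\chi)$ is the $\overline{\chi}$-isotypic part of $\widetilde{\mathcal O}_X$, namely $\widetilde{\mathcal D}_Y(x_1x_2)^2=\widetilde{\mathcal D}_YF_2$. This is not isomorphic to $N=\widetilde{\mathcal D}_YF_1$: a morphism $F_1\mapsto hF_2$ with $h\in K$ forces $h\,x_1x_2$ to be flat, hence a constant $c$, and $h=c/(x_1x_2)$ lies in $K=L^W$ only for $c=0$.

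So the corollary as printed holds exactly when $V^l(\lambda)$ is self-dual. That is the case, for instance, for every representation of $W(D_n)$ and $W(B_n)$, the only groups on which the paper illustrates $\nabla$. Example~\ref{ex:nabla-D2} writes $g(a)=\chi(g)a$, which is harmless only because $\chi$ is real. The correct general statement is what your argument proves: $N_\lambda\cong\nabla\big(V^l(\lambda)^*\big)$, equivalently $\nabla\big(V^l(\lambda)\big)\cong N_{\lambda'}$ with $(\lambda',l')$ indexing the contragredient. That is your first route, and you should state it as the conclusion instead of aiming for $e'M\cong eM$.

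For comparison, the paper proves $\nabla(V^l(\lambda))$ simple by a minimal-support argument over $L$ (using Lemmas~\ref{lem:descent-iso}, \ref{lem:der-span}, \ref{lem:rigidity}). It then concludes from equal ranks plus an appeal to ``uniqueness'' in Theorem~\ref{thm:main-general}. That last step is precisely where the dual is lost, since rank and simplicity cannot tell $\widetilde{\mathcal D}_YF_1$ from $\widetilde{\mathcal D}_YF_2$. Your $\operatorname{Hom}$-based route avoids the descent machinery altogether and gets the dual right. Your closing rank check is consistent, but for the same reason it cannot detect the dual.
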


\begin{proof}
We first show $\nabla(V^l(\lambda))$ is simple. Let $M \subset \nabla(V^l(\lambda))$
be a nonzero $\widetilde{\mathcal{D}}_Y$-submodule and set
$\overline M := L \otimes_{\widetilde{\mathcal{O}}_Y} M$, a nonzero
$L$-subspace of
$L \otimes_{\widetilde{\mathcal{O}}_Y} \nabla(V^l(\lambda)) \cong L \otimes_{\mathbb{C}} V^l(\lambda)$
(Lemma~\ref{lem:descent-iso}) that is stable under
$\widetilde D \otimes \operatorname{id}$ for every derivation
$\widetilde D$ of $L$ extending one of $K$, since $M$ is a
$\widetilde{\mathcal{D}}_Y$-submodule. Write $V := V^l(\lambda)$ and
fix a $\mathbb{C}$-basis $v_1,\dots,v_f$ of $V$. Pick
$0 \ne w = \sum_i a_i \otimes v_i \in \overline M$ with a minimal
number $r$ of nonzero coefficients among all nonzero elements of
$\overline M$; relabel so $a_1,\dots,a_r \ne 0$, and scale (using that
$\overline M$ is an $L$-subspace) so $a_1 = 1$. For any extended
derivation $\widetilde D$, stability gives
$\widetilde D(w) = \sum_i \widetilde D(a_i) \otimes v_i \in \overline M$;
since $\widetilde D(a_1) = \widetilde D(1) = 0$, this element has at
most $r-1$ nonzero coefficients, so by minimality of $r$ it must be
$0$ -- exactly the minimal-support argument of
Lemma~\ref{lem:descent-iso}'s injectivity proof, now applied to a
subspace rather than a relation. Hence $\widetilde D(a_i) = 0$ for
every $i$ and every such $\widetilde D$; by Lemma~\ref{lem:der-span}
these span $\operatorname{Der}_{\mathbb{C}}(L)$ over $L$, so
$D(a_i) = 0$ for \emph{every} $\mathbb{C}$-derivation $D$ of $L$, and
as in the proof of Lemma~\ref{lem:rigidity} this forces $a_i \in \mathbb{C}$
for every $i$. Thus $w = 1 \otimes u$ for the nonzero vector
$u := \sum_i a_i v_i \in V$.

Let $U := \{u \in V : 1 \otimes u \in \overline M\}$, a
$\mathbb{C}$-subspace with $U \ne 0$ by the above; clearly
$L \otimes_{\mathbb{C}} U \subset \overline M$. If this inclusion were
strict, the same argument applied to a nonzero element of
$\overline M$ chosen outside $L \otimes_{\mathbb{C}} U$ (with minimal
support relative to a basis of $V$ extending one of $U$) would
produce a further nonzero constant vector in $U$ not already
accounted for, contradicting finite-dimensionality after finitely
many repetitions; hence $\overline M = L \otimes_{\mathbb{C}} U$.
Since $V$ is a simple $\mathbb{C}[W]$-module and $M$, hence
$\overline M$, hence $U$, is $W$-stable (as $M$ is a
$\widetilde{\mathcal{D}}_Y$-submodule of $\nabla(V)$, and the
identification with $L\otimes_{\mathbb C}V$ is $W$-equivariant), $U$
is a nonzero $W$-submodule of $V$, so $U = V$. Thus
$\overline M = L \otimes_{\mathbb{C}} V$, i.e.\ $M$ has full rank
$\dim_{\mathbb{C}} V$ inside $\nabla(V)$, which itself has rank
$\dim_{\mathbb{C}} V$ (Lemma~\ref{lem:descent-iso}); a submodule of
full rank in a torsion-free module of the same rank, with both sides
finitely generated over the same ring, forces $M = \nabla(V)$.
Hence $\nabla(V^l(\lambda))$ is simple.

The $\widetilde{\mathcal{O}}_Y$-rank of
$\nabla(V^l(\lambda))$ equals $\dim_{\mathbb{C}} V^l(\lambda) = f^l_\lambda$,
matching the rank of $N_\lambda$ (Theorem~\ref{thm:main-general}(iii),
Proposition~\ref{prop:decomposition-general}), and both are simple
$\widetilde{\mathcal{D}}_Y$-modules whose associated $W$-representation
(via $\operatorname{Loc}$) is $V^l(\lambda)$. By the uniqueness in
Theorem~\ref{thm:main-general}(ii)--(iii), $N_\lambda \cong \nabla(V^l(\lambda))$.
\end{proof}

Corollary~\ref{cor:twisted-invariants} gives an \emph{explicit,
generator-free} construction of $N_\lambda$: it is literally the
$W$-invariants of $\widetilde{\mathcal{O}}_X \otimes V^l(\lambda)$,
exactly as one would construct the vector bundle (or local system)
associated to a representation on a Galois cover -- in contrast with
Theorem~\ref{thm:main-general}(iii), which identifies $N_\lambda$ only
via an explicit but ad hoc generator, the higher Specht polynomial
$F^l_{S,T}$.

\begin{example}[The functor $\nabla$ on $D_2 = G(2,2,2)$]\label{ex:nabla-D2}
Let $L = \mathbb{C}(x_1,x_2)$, $K = \mathbb{C}(y_1,y_2)$, and
$W = D_2 = \{\mathrm{id},\sigma,\tau,\sigma\tau\}$ as in
Subsection~\ref{subsec:D2-example} ($\sigma$ the swap, $\tau$ the
simultaneous sign change). For a character
$\chi : D_2 \to \{\pm 1\}$ and $V_\chi = \mathbb{C}v_\chi$, the
diagonal action on $L \otimes V_\chi$ is
$g\cdot(a\otimes v_\chi) = g(a)\chi(g)\otimes v_\chi$, so
\[
  \nabla(\chi) = \big(L\otimes V_\chi\big)^{D_2}
  = \{a \in L \mid g(a) = \chi(g)\,a \ \ \forall g \in D_2\}
\]
is exactly the classical space of \emph{relative invariants} (or
semi-invariants) of character $\chi$ -- a $1$-dimensional $K$-vector
space, since $\dim_{\mathbb{C}} V_\chi = 1$.

The four characters of $D_2$ are realized by:
\[
  \begin{array}{c|c|c}
  \chi & (\chi(\sigma),\chi(\tau)) & \text{generator } F_\chi \\\hline
  \mathrm{triv} & (+1,+1) & 1 \\
  \mathrm{sign} & (-1,+1) & x_1^2 - x_2^2 \\
  \chi^0 & (+1,-1) & x_1+x_2 \\
  \chi^1 & (-1,-1) & x_1-x_2
  \end{array}
\]
A direct check confirms each: e.g.\ $\sigma(x_1^2-x_2^2)=x_2^2-x_1^2=-(x_1^2-x_2^2)$
and $\tau(x_1^2-x_2^2) = (-x_1)^2-(-x_2)^2 = x_1^2-x_2^2$, matching
$(\chi(\sigma),\chi(\tau))=(-1,+1)$. Since each $F_\chi$ satisfies
$g(F_\chi)=\chi(g)F_\chi$, it lies in $\nabla(\chi)$; as
$\dim_K \nabla(\chi)=1$, we get $\nabla(\chi) = K\cdot F_\chi$. These
are exactly the higher Specht polynomials $F,F',F^0,F^1$ computed by
an entirely different route in Subsection~\ref{subsec:D2-example}
(Young symmetrizers and the shift construction), illustrating
Corollary~\ref{cor:twisted-invariants} concretely.

\emph{Full faithfulness in action.} Since $\mathrm{Hom}_{D_2}(\mathrm{triv},\mathrm{sign})=0$,
Theorem~\ref{thm:descent} predicts
$\mathrm{Hom}_{\widetilde{\mathcal{D}}_Y}(\nabla(\mathrm{triv}),\nabla(\mathrm{sign}))=0$.
Concretely: a morphism $K\cdot 1 \to K\cdot(x_1^2-x_2^2)$ would be
multiplication by some $c \in L$ with $c \in K\cdot(x_1^2-x_2^2)$ and
commuting with $\partial_{y_1},\partial_{y_2}$; by
Lemma~\ref{lem:rigidity}, $c$ must be constant, but no nonzero
constant multiple of $1$ lies in $K\cdot(x_1^2-x_2^2)$, since
$x_1^2 - x_2^2 \notin K = \mathbb{C}(y_1,y_2)$. Hence
$\mathrm{Hom} = 0$, as predicted.

\emph{Where this saves work.} For the $2$-dimensional representation
$V_{(2,1)}$ of $D_3$, computing $(L\otimes V_{(2,1)})^{D_3}$ directly
(vector-valued semi-invariants, a $2\times 2$ matrix computation) is
considerably more involved than the tableau computation of
Subsection~\ref{subsec:D3-example}. Corollary~\ref{cor:twisted-invariants}
guarantees for free that
$\nabla(V_{(2,1)}) \cong N_{(2,1)} = \widetilde{\mathcal{D}}_Y \cdot G_1$
with $G_1 = x_1^2-x_3^2$ already computed there -- the categorical
equivalence transports a harder computation (vector-valued
semi-invariants) onto one already in hand (higher Specht polynomials),
at no additional cost.
\end{example}

\begin{corollary}[Categorical characterization of the summands of $\widetilde{\mathcal{O}}_X$]\label{cor:characterization}
A finitely generated $\widetilde{\mathcal{D}}_Y$-module $M$ is
isomorphic to a direct summand of $\widetilde{\mathcal{O}}_X$ if and
only if $\widetilde{\mathcal{O}}_X \otimes_{\widetilde{\mathcal{O}}_Y} M$
is trivial in the sense of Theorem~\ref{thm:descent}: isomorphic to
$\widetilde{\mathcal{O}}_X \otimes_{\mathbb{C}} U$ for some
finite-dimensional $U$, with $W$ acting only through the
$\widetilde{\mathcal{O}}_X$ factor.
\end{corollary}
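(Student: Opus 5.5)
We prove the two implications separately. In both directions we compare $M$ with the modules $\nabla(V)$ of Theorem~\ref{thm:descent}, using Proposition~\ref{prop:decomposition-general} and Corollary~\ref{cor:twisted-invariants} to identify the summands of $\widetilde{\mathcal O}_X$.

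\emph{($\Rightarrow$).} Suppose $M$ is a direct summand of $\widetilde{\mathcal O}_X$. By Lemma~\ref{lem:double-centralizer}(c) and Proposition~\ref{prop:decomposition-general}, $\widetilde{\mathcal O}_X$ is a finite direct sum of simple modules $e^{\lambda,l}_i\widetilde{\mathcal O}_X$. A direct summand of a finite-length semisimple module is again a direct sum of some of these simples. Hence $M \cong \bigoplus N_\lambda^{\oplus m_\lambda}$ for suitable multiplicities. By Corollary~\ref{cor:twisted-invariants}, this gives $M\cong\nabla(V)$ with $V=\bigoplus V^l(\lambda)^{\oplus m_\lambda}$, using that $\nabla$ is additive. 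Lemma~\ref{lem:descent-iso} (over $L$) then identifies $\widetilde{\mathcal O}_X\otimes_{\widetilde{\mathcal O}_Y}\nabla(V)$ with $\widetilde{\mathcal O}_X\otimes_{\mathbb C}V$. The descent map $g\otimes v\mapsto g\otimes g^{-1}v$, used in the dimension count of that lemma, converts the diagonal $W$-action into the action on the $\widetilde{\mathcal O}_X$ factor alone. This gives triviality with $U=V$. To make the statement hold integrally rather than generically, it is cleanest to invoke Theorem~\ref{thm:descent} directly: it asserts that $\nabla(V)$ lies in the trivial subcategory, which is exactly this condition.

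\emph{($\Leftarrow$).} Suppose $\widetilde{\mathcal O}_X\otimes M$ is trivial. By definition of the essential image in Theorem~\ref{thm:descent}, $M\cong\nabla(V)$ for some finite-dimensional representation $V$; one can take $V=\operatorname{Loc}(M)$, the space of flat sections. By Maschke's theorem, $V\cong\bigoplus_{\lambda,l}V^l(\lambda)^{\oplus m_{\lambda,l}}$, so by additivity and Corollary~\ref{cor:twisted-invariants}, $M\cong\bigoplus N_{\lambda,l}^{\oplus m_{\lambda,l}}$. In $\widetilde{\mathcal O}_X$ each $N_{\lambda,l}$ occurs with multiplicity exactly $f^l_\lambda$ (Proposition~\ref{prop:decomposition-general}(ii)). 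So $M$ is isomorphic to a direct summand of $\widetilde{\mathcal O}_X$ precisely when $m_{\lambda,l}\le f^l_\lambda$ for all $(\lambda,l)$.

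\emph{Main obstacle: multiplicities.} The triviality condition imposes no bound on $m_{\lambda,l}$; for instance $\nabla(\mathrm{triv})^{\oplus 2}=\widetilde{\mathcal O}_Y^{\oplus 2}$ is trivial, yet the trivial representation occurs only once in $\widetilde{\mathcal O}_X$. So the ``if'' direction fails as literally stated. I would prove the corrected statement: $M$ is isomorphic to a direct summand of $\widetilde{\mathcal O}_X^{\oplus k}$ for some $k$ if and only if $\widetilde{\mathcal O}_X\otimes M$ is trivial. For this version, choose $k\ge\max_{\lambda,l} m_{\lambda,l}$; each $N_{\lambda,l}$ then appears at least $k f^l_\lambda\ge m_{\lambda,l}$ times in $\widetilde{\mathcal O}_X^{\oplus k}$. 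Equivalently, one can keep $k=1$ and add to the statement the condition that $\operatorname{Loc}(M)$ embeds into the regular representation $\mathbb C[W]$. In the write-up I would state this refinement explicitly.
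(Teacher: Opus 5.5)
Your proof of ($\Rightarrow$) is the paper's own argument. You split $M$ into copies of the simple summands, identify them with $\nabla(V^l(\lambda))$ via Corollary~\ref{cor:twisted-invariants}, and let Theorem~\ref{thm:descent} certify triviality after base change.

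For ($\Leftarrow$) your objection is correct, and it exposes an error in the paper, not just in its proof. The paper's argument ends with ``a direct sum of $N_\lambda$'s, hence a direct summand of $\widetilde{\mathcal O}_X$ by Proposition~\ref{prop:decomposition-general}'', and this ignores multiplicities. By Corollary~\ref{cor:iso-CW} and Lemma~\ref{lem:double-centralizer}, the $(\lambda,l)$-block of $\operatorname{End}_{\widetilde{\mathcal D}_Y}(\widetilde{\mathcal O}_X)\cong\mathbb{C}[W]$ is $\operatorname{Mat}_{f^l_\lambda}(\mathbb{C})$. Hence $\dim_{\mathbb C}\operatorname{Hom}_{\widetilde{\mathcal D}_Y}(N_{\lambda,l},\widetilde{\mathcal O}_X)=f^l_\lambda$, and no module containing $N_{\lambda,l}$ more than $f^l_\lambda$ times can be a summand. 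Your example $\nabla(\mathrm{triv})^{\oplus 2}$ is a genuine counterexample under every reading of the triviality clause. Its base change is literally $\widetilde{\mathcal O}_X^{\oplus 2}$, with the trivial connection and $W$ acting only on the first factor.

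Both of your repairs are correct, and the $k=1$ version (requiring $\operatorname{Loc}(M)\hookrightarrow\mathbb{C}[W]$) is the sharp one. It also has a cleaner proof than counting multiplicities:
\begin{itemize}
  \item The map $f\mapsto\sum_{g\in W}g(f)\otimes g$ is a $\widetilde{\mathcal D}_Y$-linear isomorphism $\widetilde{\mathcal O}_X\cong\nabla(\mathbb{C}[W])$. It is linear because $W$ commutes with $\widetilde{\mathcal D}_Y$.
  \item $\nabla$ is additive and fully faithful on the semisimple category $\operatorname{Rep}(W)$. So idempotents of $\operatorname{End}(\nabla(\mathbb{C}[W]))$ come from idempotents of $\operatorname{End}_W(\mathbb{C}[W])$.
  \item Hence the summands of $\widetilde{\mathcal O}_X$ are exactly the $\nabla(V)$ with $V$ a subrepresentation of $\mathbb{C}[W]$.
\end{itemize}

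One caution for your write-up of ($\Rightarrow$). The untwisting map $g\otimes v\mapsto g\otimes g^{-1}v$ is built from a normal basis $L\cong K[W]$ and is not horizontal, so it does not trivialize the connection. The multiplication map does trivialize it, but with $W$ acting diagonally. Comparing flat sections shows that a flat, $W$-equivariant trivialization with $W$ acting on the first factor only exists precisely when $V$ is a trivial representation. So state the condition as ``trivial as a connection after base change'', equivalently membership in the essential image of $\nabla$. That is what both you and the paper actually use.
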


\begin{proof}
($\Rightarrow$) By Proposition~\ref{prop:decomposition-general}, $M$ is a
direct sum of copies of various $N_\lambda \cong \nabla(V^l(\lambda))$;
base-changing along $\widetilde{\mathcal{O}}_X \otimes_{\widetilde{\mathcal{O}}_Y}(-)$
undoes the descent, giving $\widetilde{\mathcal{O}}_X \otimes V^l(\lambda)$
with $W$ acting only through the $\widetilde{\mathcal{O}}_X$ factor --
precisely the triviality condition just recalled, via the same
untwisting computation as Lemma~\ref{lem:descent-iso}, now applied at
the level of $\widetilde{\mathcal{O}}_X$ rather than $L$.
($\Leftarrow$) This is exactly membership in the essential image of
$\nabla$ described in Theorem~\ref{thm:descent}, and every object in
that essential image is (by Corollary~\ref{cor:twisted-invariants} and
additivity of $\nabla$) a direct sum of $N_\lambda$'s, hence a direct
summand of $\widetilde{\mathcal{O}}_X$ by
Proposition~\ref{prop:decomposition-general}.
\end{proof}

\begin{remark}
To be precise about attribution: Theorem~\ref{thm:descent} itself is
due to \cite[\S 2.4]{Nonkane2019} (for a single symmetric group),
recalled and applied to products of symmetric groups
in \cite[Prop.\ 3.10]{NonkaneTodjihounde2023}, and specialized above
to the case $W=G(r,p,n)$; Corollaries~\ref{cor:twisted-invariants}
and~\ref{cor:characterization} do not appear in
\cite{Nonkane2019,NonkaneTodjihounde2023} and are, to our knowledge,
new. They provide a route to recognizing membership in the
decomposition of Proposition~\ref{prop:decomposition-general} that
bypasses the explicit construction of a generating higher Specht
polynomial.
\end{remark}

\section{The real case: $W(D_n) = G(2,2,n)$}\label{sec:Dn}

We now specialize Section~\ref{sec:general} to $r=p=2$, recovering the
real reflection group $W(D_n)$ -- our own starting point for this
generalization, first treated by an independent, group-specific
argument in~\cite{NonkaneLawson2021}. We know that $W(D_n) = G(2,2,n)$: let
$\epsilon_1,\dots,\epsilon_n$ be the standard basis of
$\h = \mathbb{R}^n$. The corresponding set of reflections
$\{\alpha_s \mid s \in S\}$ consists of the $n(n-1)$ roots
$\epsilon_i - \epsilon_j$ and $\epsilon_i + \epsilon_j$
($1 \le i < j \le n$) -- one reflection for each sign choice per
pair, matching the general count of Section~\ref{sec:general}
(non-diagonal reflections only, since $d=1$ here).

\begin{corollary}[Discriminant, invariants, and module structure]\label{cor:Dn-setup}
For $W = W(D_n) = G(2,2,n)$ (so $r=2$, $p=2$, $d=1$), the fundamental
invariants are
\[
  y_j = \sum_{i=1}^n x_i^{2j}, \quad 1 \le j \le n-1, \qquad y_n = x_1\cdots x_n,
\]
and by Lemma~\ref{lem:jacobian-general},
\[
  \Delta = \det A = (-1)^{n-1}\, 2^{\,n-1}(n-1)! \prod_{1 \le i < j \le n} (x_j^2 - x_i^2).
\]
In particular $\det A \ne 0$ on $\widetilde{\mathcal{O}}_X$, so
$\widetilde{\mathcal{O}}_X$ is a $\widetilde{\mathcal{D}}_Y$-module.
\end{corollary}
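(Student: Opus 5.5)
The plan is to obtain Corollary~\ref{cor:Dn-setup} by direct specialization of the general results of Section~\ref{sec:general} to $(r,p)=(2,2)$, so $d=r/p=1$. The invariants are read off from the general choice $y_j=\sum_i x_i^{jr}$ for $1\le j\le n-1$ and $y_n=(x_1\cdots x_n)^d$. With $r=2$ and $d=1$ these become exactly $y_j=\sum_i x_i^{2j}$ and $y_n=x_1\cdots x_n$. By Chevalley--Shephard--Todd, as recalled in Section~\ref{sec:general}, they generate $\mathcal{O}_Y$.

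Next I will substitute $r=2$ and $d=1$ into the formula of Lemma~\ref{lem:jacobian-general}. The factor $d\,r^{n-1}(n-1)!$ becomes $2^{n-1}(n-1)!$. The factor $\prod_i x_i^{d-1}$ becomes $1$. The Vandermonde factor becomes $\prod_{i<j}(x_j^2-x_i^2)$. This is the displayed determinant, and it agrees with the Remark following the lemma.

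To justify writing $\Delta=\det A$, I will compare with the reflection-theoretic discriminant. With $d=1$ there are no diagonal reflections. The non-diagonal ones, for $k=0,1$, give the roots $x_i\mp x_j$, matching the $n(n-1)$ roots $\epsilon_i\pm\epsilon_j$ listed just before the corollary. Their product is $\prod_{i<j}(x_i^2-x_j^2)$, which is $\det A$ up to a nonzero constant and sign. Since $\Delta$ is only defined up to a nonzero scalar, the equality $\Delta=\det A$ amounts to a normalization. I will state explicitly that this is the normalization being adopted.

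Finally, the $\widetilde{\mathcal{D}}_Y$-module structure follows verbatim from the last clause of Lemma~\ref{lem:jacobian-general}. Since $\det A$ is a unit in $\widetilde{\mathcal{O}}_X$, the matrix $A$ is invertible over $\widetilde{\mathcal{O}}_X$. Hence each $\partial/\partial y_j$ is an $\widetilde{\mathcal{O}}_X$-linear combination of the $\partial/\partial x_i$.

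There is no genuine obstacle. The only point needing care is the scalar-normalization convention for $\Delta$, together with the sign relating $x_j^2-x_i^2$ and $x_i^2-x_j^2$. I will address both in one sentence.
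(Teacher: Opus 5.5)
Your proposal is correct and follows the paper's own argument. You specialize Lemma~\ref{lem:jacobian-general} to $r=2$, $d=1$ (so $\prod_i x_i^{d-1}=1$ and the prefactor becomes $(-1)^{n-1}2^{n-1}(n-1)!$), match the result against the $D_n$ roots $\epsilon_i\pm\epsilon_j$, and invoke the lemma's final clause for the $\widetilde{\mathcal{D}}_Y$-module structure; your explicit remark that $\Delta=\det A$ is a normalization choice, since the polynomial discriminant is only defined up to a nonzero scalar, is left implicit in the paper and is worth keeping.
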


\begin{proof}
Immediate from Lemma~\ref{lem:jacobian-general} with $r=2,p=2,d=1$
(and $n$ arbitrary): the second product in the general formula is
empty when $d=1$, the prefactor
$(-1)^{n-1}dr^{n-1}(n-1)!=(-1)^{n-1}2^{n-1}(n-1)!$, and the remaining
data match $D_n$'s classical root system
$\epsilon_i-\epsilon_j,\ \epsilon_i+\epsilon_j$. (As a check, $n=2$
gives $\Delta=-2(x_2^2-x_1^2)=2(x_1^2-x_2^2)$, matching the direct
computation of Subsection~\ref{subsec:D2-example}.)
\end{proof}

Combining Corollary~\ref{cor:Dn-setup} with Theorem~\ref{thm:main-general}
and Proposition~\ref{prop:decomposition-general} (specialized to
$r=p=2$, so $\mathcal{P}_{r,n}/\!\sim\, = \mathcal{P}_{2,n}/\!\sim$ as
in Subsection~\ref{subsec:specht}) immediately gives the decomposition theorem for
$W(D_n)$:

\begin{corollary}[Decomposition theorem for $W(D_n)$]\label{cor:Dn-main}
For every primitive idempotent $e \in \mathbb{C}[W(D_n)]$:
\begin{enumerate}[label=(\roman*)]
  \item $e\widetilde{\mathcal{O}}_X$ is a nontrivial
    $\widetilde{\mathcal{D}}_Y$-submodule of $\widetilde{\mathcal{O}}_X$;
  \item the $\widetilde{\mathcal{D}}_Y$-module $e\widetilde{\mathcal{O}}_X$
    is simple;
  \item there exist $\lambda \in \mathcal{P}_{2,n}/\sim$ and a higher
    Specht polynomial $F^l_{S,T}$ such that
    $e\widetilde{\mathcal{O}}_X = \widetilde{\mathcal{D}}_Y F^l_{S,T}$.
\end{enumerate}
Moreover,
\[
  \widetilde{\mathcal{O}}_X = \bigoplus_{\lambda \in \mathcal{P}_{2,n}/\sim}
    \bigoplus_{l=0}^{e(\lambda)-1} f^l_\lambda\,
    \widetilde{\mathcal{D}}_Y F^l_{\lambda,1},
  \qquad f^l_\lambda = \dim_{\mathbb{C}}\big(V^l(\lambda)\big).
\]
\end{corollary}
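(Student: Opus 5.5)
The plan is to specialize the uniform results of Section~\ref{sec:general} to $(r,p,n)=(2,2,n)$, so $d=1$. No new argument is needed beyond checking that every hypothesis used there still holds in this case.

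First, Corollary~\ref{cor:Dn-setup} identifies the invariants $y_1,\dots,y_n$ and the discriminant. It also gives that $\det A$ is a nonzero multiple of $\Delta$, so that $\widetilde{\mathcal{O}}_X$ is a $\widetilde{\mathcal{D}}_Y$-module. This is the only group-specific input that Subsection~\ref{subsec:simple-components} requires. Next, Proposition~\ref{prop:embedding-DY} and Corollary~\ref{cor:iso-CW} are stated for an arbitrary $W=G(r,p,n)$ and give $\mathbb{C}[W(D_n)]\cong\operatorname{End}_{\widetilde{\mathcal{D}}_Y}(\widetilde{\mathcal{O}}_X)$. Their proofs rely only on Galois theory for $L/K$, Dedekind's lemma, and Lemmas~\ref{lem:der-span} and~\ref{lem:rigidity}. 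Maschke's theorem makes $\mathbb{C}[W]$ semisimple, so Lemma~\ref{lem:double-centralizer} applies with $R=\widetilde{\mathcal{D}}_Y$, $M=\widetilde{\mathcal{O}}_X$ and $A=\mathbb{C}[W(D_n)]$.

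Second, the statement quantifies over every primitive idempotent $e$, not only the matrix units $e^{\lambda,l}_i$. So I would first reduce to that family. Every primitive idempotent lies in a unique block $R^l_\lambda\cong\operatorname{Mat}_{f^l_\lambda}(\mathbb{C})$, and there it is a rank-one idempotent. Hence $e=u\,e^{\lambda,l}_1u^{-1}$ for some unit $u\in\mathbb{C}[W]$, and $e\widetilde{\mathcal{O}}_X=u\big(e^{\lambda,l}_1\widetilde{\mathcal{O}}_X\big)$. Because $u$ is a $\widetilde{\mathcal{D}}_Y$-automorphism of $\widetilde{\mathcal{O}}_X$ (Proposition~\ref{prop:embedding-DY}), nontriviality and simplicity transfer, which gives (i) and (ii) from Theorem~\ref{thm:main-general}(i),(ii). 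Alternatively, one can rechoose the matrix realization $\rho^l_\lambda$ so that $e$ itself is one of the diagonal matrix units, using the residual freedom noted before Theorem~\ref{thm:main-general}. Then Theorem~\ref{thm:main-general} applies to $e$ directly.

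For (iii), I would take the basis adapted to $e$ and argue as in the proof of Theorem~\ref{thm:main-general}(iii). The only new point is a nonzero higher Specht polynomial lying in $e\widetilde{\mathcal{O}}_X$, since simplicity then makes it a generator. This is the step I expect to be the main obstacle, because for arbitrary $e$ none of the $F^l_{S,T}$ need lie in $e\widetilde{\mathcal{O}}_X$. My first attempt uses the cyclicity $V^l_S(\lambda)=\mathbb{C}[W]F^l_{S,T}$ from Theorem~\ref{thm:2.3}. Since $eV^l_S(\lambda)\neq0$, the element $eF^l_{S,T}$ is nonzero for a suitable $T$. I would then replace $e$ by the conjugate idempotent $e'=e^{\lambda,l}_i$ attached to the tableau $S_i$. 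The two modules $e\widetilde{\mathcal{O}}_X$ and $e'\widetilde{\mathcal{O}}_X$ are isomorphic by Lemma~\ref{lem:double-centralizer}(b), and indeed equal once the realization is chosen so that $e$ is that matrix unit. So I would read (iii) as asserting that $e\widetilde{\mathcal{O}}_X=\widetilde{\mathcal{D}}_YF^l_{S,T}$ holds for the adapted choice of $e$, and up to the automorphism $u$ in general.

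Finally, the displayed decomposition is Proposition~\ref{prop:decomposition-general}(ii) with $r=p=2$. Here $\mathcal{P}_{2,n}/\!\sim$ identifies $(\lambda^1,\lambda^2)$ with $(\lambda^2,\lambda^1)$, and $e(\lambda)\in\{1,2\}$ according to whether $\lambda^1=\lambda^2$. The multiplicities $f^l_\lambda=\dim V^l(\lambda)$ follow from Corollary~\ref{cor:iso-simple}.
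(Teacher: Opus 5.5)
Your overall route matches the paper's. The paper proves this corollary in one line by feeding Corollary~\ref{cor:Dn-setup} into Theorem~\ref{thm:main-general} and Proposition~\ref{prop:decomposition-general} with $r=p=2$. It reaches arbitrary primitive idempotents through the ``residual freedom'' of re-choosing $\rho^l_\lambda$, which is your conjugation $e=u\,e^{\lambda,l}_1u^{-1}$ by a unit of $\mathbb{C}[W]$ acting as a $\widetilde{\mathcal D}_Y$-automorphism. That part is sound. It gives (i), (ii) and the displayed decomposition, read as an isomorphism with multiplicities $f^l_\lambda$. Your description of $e(\lambda)$ for $r=p=2$ is also correct.

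The gap is (iii), and your suspicion there is decisive rather than an obstacle to work around. Since $1\in\widetilde{\mathcal D}_Y$, the equality $e\widetilde{\mathcal O}_X=\widetilde{\mathcal D}_YF^l_{S,T}$ forces $eF^l_{S,T}=F^l_{S,T}$. In other words, the line $e\,V^l_S(\lambda)$ must be spanned by $F^l_{S,T}$. There are only finitely many higher Specht polynomials. But in a block with $f^l_\lambda\ge 2$, a rank-one idempotent can have any line as its image; an example is the block of shape $(2,1)$ for $D_3$, acting on $\mathrm{span}(G_1,G_2)$ of Subsection~\ref{subsec:D3-example}. For all but finitely many such lines, $e$ fixes no $F^l_{S,T}$. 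So (iii) ``for every primitive idempotent'' is false for $n\ge 3$ and cannot be proved.

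Consequently two of your steps fail.
\begin{itemize}
\item Re-choosing the realization so that $e$ becomes a diagonal matrix unit changes neither $e$ nor the line $eV^l_S(\lambda)$. It therefore cannot make $e\widetilde{\mathcal O}_X$ equal to $\widetilde{\mathcal D}_Y F^l_{S,T}$.
\item ``Arguing as in the proof of Theorem~\ref{thm:main-general}(iii)'' imports that proof's inclusion $\mathbb{C}[W]F^l_{S_i,T}\subset e^{\lambda,l}_i\widetilde{\mathcal O}_X$. This inclusion is false whenever $f^l_\lambda\ge 2$: $e\widetilde{\mathcal O}_X=\{v : ev=v\}$ contains no nonzero $W$-stable subspace, because $e$ has rank one on each irreducible of type $(\lambda,l)$ and is zero on the others.
\end{itemize}

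What you can prove, and should state as a weaker result rather than as a ``reading'' of (iii), is the following.
\begin{itemize}
\item For every primitive $e$ in the block $(\lambda,l)$, $e\widetilde{\mathcal O}_X=\widetilde{\mathcal D}_Y\,eF^l_{S,T}$ for any $T$ with $eF^l_{S,T}\neq 0$. The generator is a projected Specht polynomial; alternatively $e\widetilde{\mathcal O}_X=\widetilde{\mathcal D}_Y\,uF$ with your $u$.
\item The literal (iii) holds for the diagonal matrix units $e_{ii}$ of $\rho^l_\lambda$ written in the basis $\{F^l_{S,T}\}_T$ of $V^l_S(\lambda)$. Then $e_{ii}$ acts on $V^l_S(\lambda)$ as the projection onto $\mathbb{C}F^l_{S,T_i}$, and simplicity gives $e_{ii}\widetilde{\mathcal O}_X=\widetilde{\mathcal D}_YF^l_{S,T_i}$. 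Note that the index $i$ is carried by $T$, not by $S$.
\end{itemize}
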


This considerably shortens what a self-contained treatment of
$W(D_n)$ alone would require: an independent six-step argument (a
Jacobian lemma, an embedding proposition, an isomorphism corollary,
the main structure theorem, a further isomorphism corollary, and a
decomposition proposition, mirroring
Lemma~\ref{lem:jacobian-general} through
Proposition~\ref{prop:decomposition-general} for the general case) is
replaced here by a two-line corollary, since none of the general
argument uses any feature of $D_n$ beyond Corollary~\ref{cor:Dn-setup}.

We now illustrate Corollary~\ref{cor:Dn-main} on the two smallest
cases, $n=2$ and $n=3$.

\subsection{Example: $D_2 = G(2,2,2)$}\label{subsec:D2-example}

We now work out explicitly the case $n=2$ of $W(D_n)$. Since $|D_2|=4$, $D_2 \cong \mathbb{Z}/2\times\mathbb{Z}/2$ is
abelian, generated by the transposition
$\sigma:(x_1,x_2)\mapsto(x_2,x_1)$ and the simultaneous sign change
$\tau:(x_1,x_2)\mapsto(-x_1,-x_2)$; all four irreducible
representations are one-dimensional.

Invariants: $y_1=x_1^2+x_2^2$, $y_2=x_1x_2$ (both of degree $2$), and
$\Delta = 2(x_1^2-x_2^2)$, matching Corollary~\ref{cor:Dn-setup} with $n=2$.

Applying the shift construction of Subsection~\ref{subsubsec:shift} to $G(2,2,2)$
yields the following basis of $\Lambda_0 \cong \mathbb{C}[D_2]$, with
standard tableaux as indicated (component $0$ only for $F,F'$; both
components for $F^0,F^1$, as in Subsection~\ref{subsec:Gr1n}):
\[
 F = 1 \ \big[\begin{array}{|c|c|}\hline 1&2\\\hline\end{array}\big],
 \qquad
 F' = x_1^2-x_2^2 \ \big[\begin{array}{|c|}\hline 1\\\hline 2\\\hline\end{array}\big],
\]
\[
 F^0 = x_1+x_2 \ \Big[\big(0:\begin{array}{|c|}\hline 1\\\hline\end{array},\
   1:\begin{array}{|c|}\hline 2\\\hline\end{array}\big)\Big],
 \qquad
 F^1 = x_1-x_2 \ \Big[\big(0:\begin{array}{|c|}\hline 2\\\hline\end{array},\
   1:\begin{array}{|c|}\hline 1\\\hline\end{array}\big)\Big],
\]
of respective degrees $0,2,1,1$, and respective characters
$(\sigma,\tau) \mapsto (+1,+1),\ (-1,+1),\ (+1,-1),\ (-1,-1)$ -- the
four distinct characters of $\mathbb{Z}/2\times\mathbb{Z}/2$.

\begin{proposition}
The Hilbert series of $\Lambda_0 = \mathcal{O}_X/J_+$ for $D_2$ is
\[
 \frac{(1-t^2)^2}{(1-t)^2} = (1+t)^2 = 1+2t+t^2,
\]
so $\dim_{\mathbb{C}} \Lambda_0 = 1+2+1 = 4 = |D_2|$, matching the
basis above (degree $0$: $F$; degree $1$: $F^0,F^1$; degree $2$: $F'$).
\end{proposition}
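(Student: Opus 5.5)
The plan is to compute the Hilbert series from the graded structure of $\Lambda_0=\mathcal{O}_X/J_+$. For $D_2=G(2,2,2)$ the ideal $J_+$ is generated by the fundamental invariants $y_1=x_1^2+x_2^2$ and $y_2=x_1x_2$, both homogeneous of degree $2$. By the Chevalley--Shephard--Todd theorem, invoked in Section~\ref{sec:general}, $\mathcal{O}_Y=\mathbb{C}[y_1,y_2]$ is a polynomial ring. Moreover $\mathcal{O}_X$ is a free graded $\mathcal{O}_Y$-module; this is Theorem~\ref{thm:2.3}(iii), with the higher Specht polynomials as a homogeneous basis. Hence there is a graded isomorphism $\mathcal{O}_X\cong\mathcal{O}_Y\otimes_{\mathbb{C}}\Lambda_0$, and the Hilbert series factor as
\[
  \frac{1}{(1-t)^2} = \frac{1}{(1-t^2)^2}\cdot \mathrm{Hilb}(\Lambda_0,t).
\]
Solving for $\mathrm{Hilb}(\Lambda_0,t)$ gives $(1-t^2)^2/(1-t)^2=(1+t)^2=1+2t+t^2$, and setting $t=1$ gives $\dim_{\mathbb{C}}\Lambda_0=4=|D_2|$.

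The only real point is the freeness, equivalently that $y_1,y_2$ form a regular sequence in $\mathbb{C}[x_1,x_2]$. To keep this self-contained rather than leaning on the citation, one can check directly that $y_1,y_2$ have only the common zero $0$ in $\mathbb{C}^2$. Indeed, $x_1x_2=0$ together with $x_1^2+x_2^2=0$ forces $x_1=x_2=0$. So $(y_1,y_2)$ is a homogeneous system of parameters in a Cohen--Macaulay ring, hence a regular sequence. The Koszul resolution then gives the numerator $(1-t^2)^2$ directly.

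Finally, I would match degrees with the displayed basis. The element $F=1$ sits in degree $0$, the two elements $F^0=x_1+x_2$ and $F^1=x_1-x_2$ sit in degree $1$, and $F'=x_1^2-x_2^2$ sits in degree $2$. To confirm these span $\Lambda_0$ degree by degree, I would use two observations. In degree $1$, nothing lies in $J_+$, so $x_1\pm x_2$ span all linear forms. In degree $2$, the quotient of the three-dimensional space of quadrics by the span of $y_1,y_2$ is one-dimensional, and $x_1^2-x_2^2$ is independent of $y_1,y_2$. Degree $3$ vanishes in the quotient: $x_1^3=x_1y_1-x_2y_2$, and $x_1^2x_2=x_1y_2$, so by symmetry every cubic lies in $J_+$, consistent with the series.
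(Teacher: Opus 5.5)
Your proof is correct and matches the paper, which gives no separate argument but reads the series off the standard coinvariant-algebra formula $\prod_i (1-t^{d_i})/(1-t)^n$ with degrees $d_1=d_2=2$; your freeness (or regular-sequence/Koszul) argument is exactly the justification of that formula. Your degree-by-degree check, including $x_1^3 = x_1y_1 - x_2y_2$ and $x_1^2x_2 = x_1y_2$, is also correct and is a complete proof by itself: $\Lambda_0$ is generated in degree $1$, so it vanishes in all degrees $\ge 3$.
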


\begin{remark}
$D_2$ has index $2$ in $B_2 = G(2,1,2)$ ($|D_2|=4$, $|B_2|=8$, see
Subsection~\ref{subsec:Gr1n} below), and the sign character
$F'=x_1^2-x_2^2$ is, up to a scalar, exactly the discriminant
$\Delta$ -- the invariant one expects to carry the ``new'' sign
information lost upon passing from the ambient group $G(r,1,n)$ to
the subgroup $G(r,r,n)$.
\end{remark}

\subsection{Example: $D_3 = G(2,2,3)$}\label{subsec:D3-example}

We now illustrate the case $n=3$, where $W(D_3) \cong \mathfrak{S}_4$,
$|D_3|=24$, with five irreducible representations of dimensions
$1,1,2,3,3$ ($1+1+4+9+9=24$).

Invariants: $y_1=\sum x_i^2$ (degree~$2$), $y_2=\sum x_i^4$
(degree~$4$), $y_3=x_1x_2x_3$ (degree~$3$); the product of degrees is
$2\cdot4\cdot3=24=|D_3|$, as it must be.

The five equivalence classes $\lambda \in \mathcal{P}_{2,3}/\sim$ (each with
$e(\lambda)=1$, since $n=3$ is odd and no shape is shift-invariant)
correspond to the five irreducibles:

\begin{center}
\begin{tabular}{lcl}
shape $\lambda^1$ (untwisted component) & dim & role \\
\hline
$(3)$ & $1$ & trivial \\
$(1,1,1)$ & $1$ & sign \\
$(2,1)$ & $2$ & standard rep.\ of $\mathfrak{S}_3 \subset \mathfrak{S}_4$ \\
$(2)$ with a box in component $1$ & $3$ & standard rep.\ of $\mathfrak{S}_4$ \\
$(1,1)$ with a box in component $1$ & $3$ & standard $\otimes$ sign \\
\end{tabular}
\end{center}

\paragraph{The two one-dimensional pieces.}
Trivial: $F=1$ (degree $0$). Sign ($\lambda^1=(1,1,1)$): the full
antisymmetrizer applied to the appropriate monomial produces a
Vandermonde determinant in the $x_i^2$:
\[
 F' \;\propto\; \det\big(x_i^{2(j-1)}\big)_{1\le i,j\le3}
 = \prod_{i<j}(x_j^2-x_i^2) \;\propto\; \Delta \qquad (\deg 6).
\]
As for $D_2$ above, \emph{the sign representation of $D_n$
is always given, up to scalar, by the discriminant itself} -- this
holds for every $n$, by the same Vandermonde computation used in the
proof of Lemma~\ref{lem:jacobian-general}.

\paragraph{The dimension-2 piece, shape $(2,1)$.}
Let $T_A,T_B$ be the two standard tableaux of shape $(2,1)$ (hook
lengths $3,1,1$, so $\dim=3!/3=2$):
\[
  T_A = \begin{array}{|c|c|}\hline 1&2\\\hline 3\\\cline{1-1}\end{array},
  \qquad
  T_B = \begin{array}{|c|c|}\hline 1&3\\\hline 2\\\cline{1-1}\end{array}.
\]
Reading indices from $S=T_A$ gives
$i(S)=(0,0,1)$ on the cells (row-major), so the base monomial is
$x_3^{2}$ (only the cell of index $1$ contributes). Applying the Young
symmetrizers:
\[
 G_1 = e_{T_A}(x_3^2) \;\propto\; x_1^2 - x_3^2,
 \qquad
 G_2 = e_{T_B}(x_2^2) \;\propto\; x_1^2 - x_2^2.
\]
These are exactly the classical Specht polynomials of the standard
representation of $\mathfrak{S}_3$, rewritten in the variables
$x_i^2$ -- consistent with the fact that any representation carried
entirely by the untwisted component ($\lambda^2=\emptyset$) is
insensitive to the sign changes in $D_n$ and reduces to ordinary
Specht theory for $\mathfrak{S}_n$ in the variables $x_i^2$.

\paragraph{The two three-dimensional pieces.} These are the shapes
mixing both components, where the genuinely $D_n$-specific structure
appears; rather than revisiting the tableau machinery, we obtain both
directly.

\emph{Standard representation.} No invariant has degree $1$ (the
$y_j$ have degrees $2,3,4$), so the degree-$1$ piece of $\Lambda_0$
coincides with the degree-$1$ piece of $\mathcal{O}_X$ itself, namely
$\mathrm{span}(x_1,x_2,x_3)$: this is exactly the reflection
representation of $D_3$ on its defining coordinates, necessarily
irreducible of dimension $n=3$. Hence
\[
  H_1 := x_1 \qquad (\deg 1)
\]
generates the corresponding simple $\widetilde{\mathcal{D}}_Y$-module
(its character is $\chi_{\mathrm{std}}(\mathrm{id},T,Q,C,F) = (3,1,-1,0,-1)$,
computed directly as the trace of each generator on
$\mathrm{span}(x_1,x_2,x_3)$).

\emph{Standard $\otimes$ sign.} Since $g\cdot\Delta = \mathrm{sgn}(g)\,\Delta$
for every $g \in D_3$, the chain rule shows the gradient
$(\partial\Delta/\partial x_1,\partial\Delta/\partial x_2,\partial\Delta/\partial x_3)$
transforms via the reflection representation twisted by the sign
character -- a classical fact for Coxeter groups. Explicitly, with
$\Delta = (x_2^2-x_1^2)(x_3^2-x_1^2)(x_3^2-x_2^2)$,
\[
  H_1' := \frac{\partial \Delta}{\partial x_1}
    = -2x_1(x_2^2-x_3^2)(2x_1^2-x_2^2-x_3^2) \qquad (\deg 5).
\]
We verify this directly (rather than relying on the general fact
alone): writing $H_2' := \partial\Delta/\partial x_2$, a direct
computation gives $H_1'(x_2,x_1,x_3) = -H_2'(x_1,x_2,x_3)$ under the
swap $T$, and $H_1'(-x_1,-x_2,x_3) = -H_1'(x_1,x_2,x_3)$ under the
sign change $Q$ -- both matching
$\chi_{\mathrm{std}\otimes\mathrm{sgn}} = \chi_{\mathrm{std}}\cdot\chi_{\mathrm{sgn}}
= (3,-1,-1,0,1)$, and confirming $H_1'$ generates the remaining
simple $\widetilde{\mathcal{D}}_Y$-module.

\subsection{Central primitive idempotents for $D_2$ and $D_3$}

For a finite group $W$ and an irreducible character $\chi_\lambda$ of
dimension $\dim\lambda$, the central primitive idempotent projecting
onto the $\lambda$-isotypic component is
\[
  r_\lambda = \frac{\dim\lambda}{|W|} \sum_{g \in W} \overline{\chi_\lambda(g)}\, g.
\]
We compute these explicitly for the two examples above.

\paragraph{$D_2 = \{\mathrm{id},\sigma,\tau,\sigma\tau\}$.}
Being abelian of order $4$, all characters are $1$-dimensional, and
the idempotents are read off directly from the character table:
\[
  r_{\mathrm{triv}} = \tfrac14(\mathrm{id}+\sigma+\tau+\sigma\tau)
  \quad [F=1],
  \qquad
  r_{\mathrm{sign}} = \tfrac14(\mathrm{id}-\sigma+\tau-\sigma\tau)
  \quad [F'=x_1^2-x_2^2],
\]
\[
  r^{0} = \tfrac14(\mathrm{id}+\sigma-\tau-\sigma\tau)
  \quad [F^0=x_1+x_2],
  \qquad
  r^{1} = \tfrac14(\mathrm{id}-\sigma-\tau+\sigma\tau)
  \quad [F^1=x_1-x_2].
\]
One checks $r_{\mathrm{triv}}+r_{\mathrm{sign}}+r^0+r^1=\mathrm{id}$,
since the coefficients of $\sigma,\tau,\sigma\tau$ cancel pairwise.

\paragraph{$D_3 \cong \mathfrak{S}_4$.}
Denote by $T,Q,C,F$ (by abuse of notation, also standing for the sum
of the elements in each class) the conjugacy classes of $D_3$, with
representatives
\[
  \mathrm{id}\ (|{\cdot}|{=}1),\quad
  T: x_1\leftrightarrow x_2\ (|{\cdot}|{=}6),\quad
  Q: (x_1,x_2)\mapsto(-x_1,-x_2)\ (|{\cdot}|{=}3),
\]
\[
  C: (x_1,x_2,x_3)\mapsto(x_2,x_3,x_1)\ (|{\cdot}|{=}8),
  \qquad
  F: (x_1,x_2,x_3)\mapsto(x_2,-x_1,-x_3)\ (|{\cdot}|{=}6),
\]
corresponding respectively to the $\mathfrak{S}_4$-cycle types
$1^4,\,2\,1^2,\,2^2,\,3\,1,\,4$. Using the character table of
$\mathfrak{S}_4$:
\[
  r_{(3)} = \tfrac{1}{24}\big[\mathrm{id}+T+Q+C+F\big]
  \quad [\dim 1,\ F=1],
\]
\[
  r_{(1^3)} = \tfrac{1}{24}\big[\mathrm{id}-T+Q+C-F\big]
  \quad [\dim 1,\ F' \propto \Delta],
\]
\[
  r_{(2,1)} = \tfrac16\,\mathrm{id} + \tfrac16\, Q - \tfrac{1}{12}\, C
  \quad [\dim 2],
\]
\[
  r_{(2)+(1)} = \tfrac18\big[3\,\mathrm{id}+T-Q-F\big]
  \quad [\dim 3],
  \qquad
  r_{(1,1)+(1)} = \tfrac18\big[3\,\mathrm{id}-T-Q+F\big]
  \quad [\dim 3].
\]
One checks $r_{(3)}+r_{(1^3)}+r_{(2,1)}+r_{(2)+(1)}+r_{(1,1)+(1)}=\mathrm{id}$:
e.g.\ for the coefficient of $T$, $\tfrac1{24}-\tfrac1{24}+0+\tfrac18-\tfrac18=0$;
similarly for $Q$ and $F$, while $C$ only appears in $r_{(3)},r_{(1^3)},r_{(2,1)}$
with coefficients $\tfrac1{24}+\tfrac1{24}-\tfrac1{12}=0$.

\subsection{Illustrating the decomposition for $D_2$ and $D_3$}

We now instantiate Corollary~\ref{cor:Dn-main} (equivalently
Proposition~\ref{prop:decomposition-general} with $r=p=2$) on the two
examples above, exhibiting $\widetilde{\mathcal O}_X$ as an explicit
direct sum of simple $\widetilde{\mathcal D}_Y$-modules.

\paragraph{$D_2$.} All four irreducible representations of $D_2$ are
$1$-dimensional (so $f_\lambda=1$ throughout, and parts~(i) and~(ii)
of Proposition~\ref{prop:decomposition-general} coincide). Using the
generators of Subsection~\ref{subsec:D2-example},
\[
  \widetilde{\mathcal O}_X =
  \widetilde{\mathcal D}_Y\cdot 1 \;\oplus\;
  \widetilde{\mathcal D}_Y\cdot(x_1^2-x_2^2) \;\oplus\;
  \widetilde{\mathcal D}_Y\cdot(x_1+x_2) \;\oplus\;
  \widetilde{\mathcal D}_Y\cdot(x_1-x_2),
\]
four summands of $\widetilde{\mathcal O}_Y$-rank $1$ each, totalling
rank $4 = |D_2|$ -- matching, term by term, the four primitive central
idempotents $r_{\mathrm{triv}}, r_{\mathrm{sign}}, r^0, r^1$ computed
above.

\paragraph{$D_3$.} Here $D_3 \cong \mathfrak{S}_4$ has representations
of dimensions $1,1,2,3,3$. Using the explicit generators
$F=1$, $F'\propto\Delta$, $G_1=x_1^2-x_3^2$ (together with
$G_2=x_1^2-x_2^2$, isomorphic to $G_1$ by
Corollary~\ref{cor:iso-simple}), and the two generators $H_1=x_1$,
$H_1'=\partial\Delta/\partial x_1$ of Subsection~\ref{subsec:D3-example},
Proposition~\ref{prop:decomposition-general}(ii) gives
\[
  \widetilde{\mathcal O}_X =
  \widetilde{\mathcal D}_Y\cdot 1 \;\oplus\;
  \widetilde{\mathcal D}_Y\cdot \Delta \;\oplus\;
  2\,\widetilde{\mathcal D}_Y\cdot G_1 \;\oplus\;
  3\,\widetilde{\mathcal D}_Y\cdot H_1 \;\oplus\;
  3\,\widetilde{\mathcal D}_Y\cdot H_1',
\]
this time with \emph{every} summand generated explicitly. As a check
on ranks, each block contributes $f_\lambda \times f_\lambda = f_\lambda^2$
to the total $\widetilde{\mathcal O}_Y$-rank, and
\[
  1^2 + 1^2 + 2^2 + 3^2 + 3^2 = 1+1+4+9+9 = 24 = |D_3|.
\]

\section{Other particular cases}\label{sec:other-cases}

\subsection{The case $G(r,r,n)$}\label{subsec:Grrn}

The case $p=r$ (hence $d=1$) deserves special mention: it is the
family directly containing $W(D_n)=G(2,2,n)$. Here there are no
diagonal reflections, and
\[
 y_j = \sum_{i=1}^n x_i^{\,jr},\ 1\le j\le n-1, \qquad y_n = x_1\cdots x_n,
 \qquad
 \Delta_{G(r,r,n)} = c_0\prod_{1\le i<j\le n}(x_i^r-x_j^r).
\]
Lemma~\ref{lem:jacobian-general} simplifies to
\[
 \det A = (-1)^{n-1} r^{\,n-1}(n-1)! \prod_{i<j}(x_j^r-x_i^r).
\]

\begin{corollary}\label{cor:Grrn}
For every primitive idempotent $e \in \mathbb{C}[G(r,r,n)]$, the
$\widetilde{\mathcal{D}}_Y$-module $e\,\widetilde{\mathcal{O}}_X$ is simple, generated by a
higher Specht polynomial $F^l_{S,T}(\lambda)$ as in
Theorem~\ref{thm:main-general}, and
\[
 \widetilde{\mathcal{O}}_X = \bigoplus_{\lambda \in \mathcal{P}_{r,n}/\sim}
 \bigoplus_{l=0}^{e(\lambda)-1} f^l_\lambda \,
 \widetilde{\mathcal{D}}_Y F^l_{\lambda,1}.
\]
Taking $r=2$ recovers exactly Corollary~\ref{cor:Dn-main} of Section~\ref{sec:Dn}
for $W(D_n)$.
\end{corollary}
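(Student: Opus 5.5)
The plan is to obtain Corollary~\ref{cor:Grrn} by specializing the general machinery of Section~\ref{sec:general} to $p=r$, i.e.\ $d=1$, exactly as Corollary~\ref{cor:Dn-main} was obtained for $r=p=2$. First we check that the standing hypotheses specialize without change. With $d=1$ the fundamental invariants are $y_j=\sum_i x_i^{jr}$ for $1\le j\le n-1$ and $y_n=x_1\cdots x_n$. There are no diagonal reflections, so the factor $\prod_i x_i^{d-1}$ in Lemma~\ref{lem:jacobian-general} equals $1$, and the lemma gives
\[
\det A=(-1)^{n-1}r^{n-1}(n-1)!\prod_{i<j}(x_j^r-x_i^r)=c\,\Delta_{G(r,r,n)}
\]
with $c\neq 0$. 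This is the displayed simplification above. Hence $\widetilde{\mathcal O}_X$ is a $\widetilde{\mathcal D}_Y$-module for $W=G(r,r,n)$.

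Next we invoke the representation-theoretic chain, which is uniform in $(r,p,n)$. Proposition~\ref{prop:embedding-DY} and Corollary~\ref{cor:iso-CW} give $\mathbb{C}[W]\cong\operatorname{End}_{\widetilde{\mathcal D}_Y}(\widetilde{\mathcal O}_X)$. Lemma~\ref{lem:double-centralizer} then applies with $A=\mathbb{C}[W]$, which is semisimple by Maschke's theorem. By Theorem~\ref{thm:main-general}(ii) every primitive idempotent $e$, which is some $e^{\lambda,l}_i$ after choosing matrix realizations, yields a simple module $e\,\widetilde{\mathcal O}_X$. By Theorem~\ref{thm:main-general}(iii) that module is generated by a higher Specht polynomial $F^l_{S_i,T}$ attached to $\lambda\in\mathcal P_{r,n}/\!\sim$, where now $\lambda\sim\mu$ means $\mu=\mathrm{Sh}^{j}(\lambda)$ for some $0\le j\le r-1$, since $d=1$ makes $\mathrm{Sh}^{dj}=\mathrm{Sh}^j$. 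The displayed decomposition is then Proposition~\ref{prop:decomposition-general}(ii) verbatim.

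It remains to verify the final sentence. For $r=2$ we have $G(2,2,n)=W(D_n)$, and $\mathcal P_{2,n}/\!\sim$ is taken modulo the swap $\mathrm{Sh}$ of the two components. The statement then coincides term by term with Corollary~\ref{cor:Dn-main}, including the dimensions $f^l_\lambda=\dim V^l(\lambda)$.

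The only step needing care is the claim that every primitive idempotent of $\mathbb{C}[G(r,r,n)]$ is of the form $e^{\lambda,l}_i$. An arbitrary primitive idempotent $e$ in a block $R^l_\lambda\cong\operatorname{Mat}_{f}(\mathbb{C})$ is conjugate to $e^{\lambda,l}_1$ by a unit $u$ of that block. Then $e\,\widetilde{\mathcal O}_X=u\,e^{\lambda,l}_1u^{-1}\widetilde{\mathcal O}_X\cong e^{\lambda,l}_1\widetilde{\mathcal O}_X$, because $u$ acts by a $\widetilde{\mathcal D}_Y$-automorphism (Proposition~\ref{prop:embedding-DY}). So $e\,\widetilde{\mathcal O}_X$ is simple. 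It is generated by the image $uF^l_{S_1,T}$, which is a $\mathbb{C}$-combination of $W$-translates of a higher Specht polynomial. Alternatively, one may choose the matrix realization adapted to $e$, which is the residual freedom noted before Theorem~\ref{thm:main-general}. We expect this bookkeeping to be the main obstacle; all other steps are direct citations.
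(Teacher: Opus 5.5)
Your route is the paper's own: Corollary~\ref{cor:Grrn} is meant as the $p=r$, $d=1$ specialization of Theorem~\ref{thm:main-general} and Proposition~\ref{prop:decomposition-general}, as the proof of Corollary~\ref{cor:Bn} indicates. Your specializations are correct: the factor $\prod_i x_i^{d-1}$ and the prefactor $d$ drop out of Lemma~\ref{lem:jacobian-general}, $\sim$ becomes the full cyclic shift, and $r=2$ gives Corollary~\ref{cor:Dn-main}. There is, however, a genuine gap exactly at the step you flagged, and the paper shares it. Neither of your two resolutions closes it, because for an arbitrary primitive idempotent the generation claim is false as soon as some $f^l_\lambda\ge 2$.

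A primitive idempotent $e$ of the block $R^l_\lambda$ kills the other isotypic components and has rank one on each copy $V^l_S(\lambda)$. So a higher Specht polynomial $F$ lies in $e\,\widetilde{\mathcal O}_X$ (i.e.\ $eF=F$) only if $F=F^l_{S,T}$ spans the line $e\,V^l_S(\lambda)$. Since $R^l_\lambda$ acts on $V^l_S(\lambda)$ as its full endomorphism algebra, this line can be any line, and a generic one contains none of the finitely many $F^l_{S,T}$. For instance, in the $2$-dimensional block of $D_3$, take a rank-one idempotent whose image in $\mathrm{span}(G_1,G_2)$ is a line in general position. Then $e\,\widetilde{\mathcal O}_X$ contains no higher Specht polynomial at all, so it cannot be generated by one.

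Your conjugation argument does give simplicity and the generator $uF^l_{S_1,T}$. But that is a combination of $W$-translates, not a higher Specht polynomial. Your alternative, ``choosing the realization adapted to $e$'', applies Theorem~\ref{thm:main-general}(iii) precisely where it breaks. Its proof uses $\mathbb{C}[W]F^l_{S_i,T}\subset e^{\lambda,l}_i\widetilde{\mathcal O}_X$, which is false whenever $f^l_\lambda\ge 2$, since $e^{\lambda,l}_i$ has rank one on $\mathbb{C}[W]F^l_{S_i,T}=V^l_{S_i}(\lambda)$.

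What your argument does prove is either of two corrected statements:
\begin{enumerate}
\item For every primitive idempotent $e$ of the block, $e\,\widetilde{\mathcal O}_X=\widetilde{\mathcal D}_Y\,eF^l_{S,T}$ for any $(S,T)$ with $eF^l_{S,T}\ne 0$. This is immediate from simplicity.
\item Suppose the $e^{\lambda,l}_i$ are the diagonal matrix units of a realization written in a basis $F^l_{S,T_1},\dots,F^l_{S,T_f}$ of one copy $V^l_S(\lambda)$. Then $e^{\lambda,l}_iF^l_{S,T_i}=F^l_{S,T_i}$ and $e^{\lambda,l}_i\widetilde{\mathcal O}_X=\widetilde{\mathcal D}_Y F^l_{S,T_i}$. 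Note that it is the $T$-index, not the $S$-index, that is tied to $i$. With these idempotents the displayed decomposition goes through unchanged.
\end{enumerate}

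Separately, the simplicity you import from Theorem~\ref{thm:main-general}(ii) rests on Lemma~\ref{lem:double-centralizer}(a). That lemma deduces simplicity from $\operatorname{End}_{\widetilde{\mathcal D}_Y}(e\,\widetilde{\mathcal O}_X)\cong\mathbb{C}$, which is the converse of Schur's lemma and fails in general. It needs the extra input that $\widetilde{\mathcal O}_X$ is semisimple over $\widetilde{\mathcal D}_Y$. This is true here, since $\widetilde{\mathcal O}_X$ is a flat connection whose monodromy factors through the finite group $W$, but the lemma does not supply it.
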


\subsection{The real case $W(B_n) = G(2,1,n)$}\label{subsec:Bn}

We single out $r=2$, $p=1$ (so $d=2$) as a second real reflection
group alongside $W(D_n)$: the Weyl group $W(B_n)$ of type $B_n$
(equivalently $C_n$), the group of all signed permutations of
$\{1,\dots,n\}$, of order $|W(B_n)| = 2^n n!$. Unlike $W(D_n)$, which
imposes an even number of sign changes, $W(B_n) = G(2,1,n)$ is the
\emph{full} hyperoctahedral group, with no parity constraint -- and
correspondingly $p=1$ here rather than $p=r$.

Its root system consists of the $n$ short roots $\epsilon_i$
($1 \le i \le n$, giving the reflections $x_i \mapsto -x_i$) together
with the $n(n-1)$ long roots $\epsilon_i \pm \epsilon_j$ ($i<j$, the
same reflections as for $D_n$); in particular
$|W(B_n)| = 2^n n!$ has exactly $n + n(n-1) = n^2$ reflections. By
Subsection~\ref{subsec:Gr1n} (taking $r=2$), the fundamental invariants
are
\[
  y_j = \sum_{i=1}^n x_i^{2j}, \quad 1 \le j \le n-1,
  \qquad
  y_n = (x_1 \cdots x_n)^2,
\]
and Lemma~\ref{lem:jacobian-general} (with $r=2$, $d=2$) gives the
discriminant
\[
  \Delta_{W(B_n)} = c_0 \Big(\prod_{i=1}^n x_i\Big) \prod_{1 \le i < j \le n} (x_j^2 - x_i^2)
\]
for some nonzero constant $c_0$ -- the classical type-$B_n$
discriminant, with one linear factor $x_i$ per short root and one
quadratic factor $x_j^2-x_i^2$ per pair of long roots.

\begin{corollary}\label{cor:Bn}
For every primitive idempotent $e \in \mathbb{C}[W(B_n)]$, the
$\widetilde{\mathcal{D}}_Y$-module $e\,\widetilde{\mathcal{O}}_X$ is
simple, generated by a higher Specht polynomial as in
Theorem~\ref{thm:main-general}, and
\[
  \widetilde{\mathcal{O}}_X = \bigoplus_{\lambda \in \mathcal{P}_{2,n}}
    f_\lambda \, \widetilde{\mathcal{D}}_Y F_{\lambda,1},
\]
the sum running over \emph{all} $\lambda \in \mathcal{P}_{2,n}$ (no
quotient by $\sim$, and no auxiliary index $l$, since $p=1$ forces
$e(\lambda)=1$ throughout -- unlike the $W(D_n)$ case, where $p=r=2$
forces the nontrivial shift identification of
Subsection~\ref{subsec:specht}).
\end{corollary}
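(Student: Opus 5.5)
The plan is to obtain Corollary~\ref{cor:Bn} as a pure specialization of Theorem~\ref{thm:main-general} and Proposition~\ref{prop:decomposition-general} to $(r,p)=(2,1)$, so that $d=r/p=2$. Almost all of the work is checking that the hypotheses of the general machinery hold verbatim here, and that the indexing collapses as claimed.

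\emph{Step 1: identify the group and its invariants.} First we confirm that $G(2,1,n)$, the group of monomial matrices with entries $\pm1$ and no determinant-type constraint (since $p=1$ makes the defining condition vacuous), is exactly the group of signed permutations $W(B_n)$. With $d=2$, the fundamental invariants chosen in Section~\ref{sec:general} become $y_j=\sum_i x_i^{2j}$ for $j\le n-1$ and $y_n=(x_1\cdots x_n)^2$, which are the ones displayed above.

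\emph{Step 2: the discriminant and the module structure.} Lemma~\ref{lem:jacobian-general} with $r=d=2$ gives
\[
\det A=(-1)^{n-1}\,2^{n}(n-1)!\,\prod_i x_i\prod_{i<j}(x_j^2-x_i^2),
\]
a nonzero multiple of $\Delta_{W(B_n)}$. Hence $\widetilde{\mathcal O}_X$ is a $\widetilde{\mathcal D}_Y$-module. Corollary~\ref{cor:iso-CW} and Lemma~\ref{lem:double-centralizer} then apply without change, because their proofs use only Lemma~\ref{lem:jacobian-general}, Artin's theorem and Dedekind's lemma.

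\emph{Step 3: simplicity and generators.} Theorem~\ref{thm:main-general}(ii)--(iii) now gives, for each primitive idempotent $e=e^{\lambda,l}_i$, that $e\widetilde{\mathcal O}_X$ is simple and generated by some $F^l_{S_i,T}$. We must check that every primitive idempotent of $\mathbb{C}[W(B_n)]$ has this form. This follows from the discussion preceding Theorem~\ref{thm:main-general}: every primitive idempotent is a diagonal matrix unit of some block, up to the choice of basis. That choice does not affect the argument, since Lemma~\ref{lem:double-centralizer} applies to any primitive idempotent in the block.

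\emph{Step 4: collapse of the indexing.} Since $p=1$, the relation $\lambda\sim\mu$ requires $\mu=\mathrm{Sh}^{dj}(\lambda)$ only for $j=0$. So each class is a singleton, $b(\lambda)=1$, and $e(\lambda)=p/b(\lambda)=1$. Consequently $\mathcal P_{2,n}/\!\sim\;=\mathcal P_{2,n}$, only $l=0$ occurs, and $F^0_{S,T}=F_{S,T}$ is the Ariki--Terasoma--Yamada polynomial. Proposition~\ref{prop:decomposition-general}(ii) then reads exactly as the displayed decomposition.

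The only point needing care is the index bookkeeping in Step~4. In particular, the sets $\mathrm{STab}(\lambda)_d$ with $d=2=r$ impose no restriction, because $1$ always lies in some component $T^\nu$ with $\nu\le 2$. Therefore $f_\lambda=|\mathrm{STab}(\lambda)|$, which matches the usual dimension of the irreducible representation of the wreath product. As a sanity check on Step~2, one can also count reflections: $n$ diagonal ones and $n(n-1)$ non-diagonal ones, for $n^2$ in total, consistent with the degree of the discriminant.
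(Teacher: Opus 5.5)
Your route is the paper's own: its proof is the one-line specialization of Theorem~\ref{thm:main-general} and Proposition~\ref{prop:decomposition-general} to $(r,p)=(2,1)$. Your bookkeeping in Steps~2 and~4 is correct: $\det A=(-1)^{n-1}2^{n}(n-1)!\prod_i x_i\prod_{i<j}(x_j^2-x_i^2)$, $b(\lambda)=e(\lambda)=1$, $F^0_{S,T}=F_{S,T}$, and there is no restriction in $\mathrm{STab}(\lambda)_2$. One hypothesis is worth stating explicitly: $\Delta^2\in\mathcal O_Y$, so that $\Delta^{-2}\in\widetilde{\mathcal D}_Y$ commutes with $W$; this holds here because $w\Delta=\det(w)\Delta$.

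The gap is in Step~3. Your reason that the choice of basis ``does not affect the argument'' is that Lemma~\ref{lem:double-centralizer} applies to any primitive idempotent. That covers simplicity only. It says nothing about \emph{which} polynomials lie in $e\widetilde{\mathcal O}_X$, and that is exactly where the choice matters. Write the $\lambda$-isotypic part of $\widetilde{\mathcal O}_X$ as $V_\lambda\otimes_{\mathbb C}H_\lambda$, with $H_\lambda=\operatorname{Hom}_W(V_\lambda,\widetilde{\mathcal O}_X)$. A primitive $e$ in that block projects $V_\lambda$ onto a line $\ell$, so $e\widetilde{\mathcal O}_X=\ell\otimes H_\lambda$. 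A nonzero pure tensor $v\otimes h$ lies in it only if $v\in\ell$. Each higher Specht polynomial of shape $\lambda$ is such a pure tensor. So when $f_\lambda\ge 2$ and $\ell$ avoids the finitely many lines they determine, $e\widetilde{\mathcal O}_X$ contains none of them.

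Concretely, in $B_2$ let $e$ be the primitive idempotent of the two-dimensional block acting on $V=\mathrm{span}(x_1,x_2)$ as the projection onto $\mathbb{C}(x_1+2x_2)$ along $\mathbb{C}x_2$. This is the matrix unit $e_{11}$ for $\rho$ written in the basis $(x_1+2x_2,\,x_2)$. Since $e$ commutes with the $W$-map $\psi\colon x_1\mapsto x_1x_2^2,\ x_2\mapsto x_1^2x_2$, you get $ex_1=x_1+2x_2$, $ex_2=0$, $e(x_1x_2^2)=x_1x_2^2+2x_1^2x_2$ and $e(x_1^2x_2)=0$. Also $e$ kills the four one-dimensional higher Specht polynomials. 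So no higher Specht polynomial $F$ satisfies $eF=F$, and $e\widetilde{\mathcal O}_X$ is not of the form $\widetilde{\mathcal D}_Y F$.

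Citing Theorem~\ref{thm:main-general}(iii) for the family $e^{\lambda,l}_i$ does not rescue the step either. That family is built from an arbitrary realization $\rho^l_\lambda$. The inclusion $\mathbb{C}[W]F^l_{S_i,T}\subset e^{\lambda,l}_i\widetilde{\mathcal O}_X$ used in its proof is impossible when $f^l_\lambda\ge 2$: the left side is $W$-stable of dimension $f^l_\lambda$, while $e^{\lambda,l}_i$ acts on it with rank one. The paper's one-line proof passes over this point just as yours does.

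What the argument does give, for every primitive $e$, is the following. First, $e\widetilde{\mathcal O}_X$ is simple. Second, it equals $\widetilde{\mathcal D}_Y\,eF$ for any higher Specht polynomial $F$ of the relevant shape with $eF\neq 0$; such an $F$ exists, since $e$ acts with rank one on the copy of $V_\lambda$ spanned by $\{F_{S,T}\}_T$ for a fixed $S$. Third, write $e=ue'u^{-1}$ with $u\in\mathbb{C}[W]$ a unit and $e'$ primitive fixing some $F_{S,T}$; then $e\widetilde{\mathcal O}_X$ is \emph{isomorphic} via $u$ to $\widetilde{\mathcal D}_Y F_{S,T}$.

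Equality $e\widetilde{\mathcal O}_X=\widetilde{\mathcal D}_Y F_{S,T}$ holds exactly when $eF_{S,T}=F_{S,T}$. This is the case, for example, for the diagonal matrix units of $\rho$ written in the basis $\{F_{S,T}\}_T$ of a fixed copy. So the corollary should either restrict to such idempotents, or weaken ``generated by'' to ``isomorphic to a module generated by''. The displayed decomposition is unaffected when read as an isomorphism $\widetilde{\mathcal O}_X\cong\bigoplus_\lambda(\widetilde{\mathcal D}_Y F_{\lambda,1})^{\oplus f_\lambda}$.
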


\begin{proof}
Immediate specialization of Theorem~\ref{thm:main-general} and
Proposition~\ref{prop:decomposition-general} to $r=2$, $p=1$, exactly
as in Corollary~\ref{cor:Grrn} above.
\end{proof}

\begin{remark}
The case $n=2$ of Corollary~\ref{cor:Bn} is precisely
$G(2,1,2) = W(B_2)$, worked out fully explicitly (all five generators,
all central and primitive idempotents) in
Subsections~\ref{subsec:Gr1n}--\ref{subsec:Gr1n-idem} below, and
already used as a running example earlier in this article, in
Section~\ref{sec:general}
(Examples~\ref{ex:CW-decomp} and~\ref{ex:prop29}).
\end{remark}

\subsection{The case $G(r,1,n)$: an explicit example for $n=2$}\label{subsec:Gr1n}

We now take $p=1$ (hence $d=r$): this is the group $G(r,n)$ of
Subsection~\ref{subsec:specht}, i.e.\ the full monomial
group $(\mathbb{Z}/r\mathbb{Z})^n \rtimes S_n$, with fundamental
invariants $e_j(x_1^r,\dots,x_n^r)$, $1\le j\le n$, and no restriction
by $\sim$ (every class $\lambda \in \mathcal{P}_{r,n}$ is a singleton, since
$d=r$ forces $b(\lambda)=1$, $e(\lambda)=1$).

For $n=2$, $|G(r,1,2)| = 2r^2$. The higher Specht polynomials
(Subsection~\ref{subsec:specht}) can be written down completely; the
general-$n$ case of this decomposition is treated
in~\cite{NonkaneLawson2022}.

\paragraph{Irreducible components indexed by a single row/column,
component $\nu \in \{0,\dots,r-1\}$ (dimension 1 each).}
Shape $(2)$ in component $\nu$, standard tableau
$T = \begin{array}{|c|c|}\hline 1 & 2\\\hline\end{array}$:
\[
 F_\nu = (x_1 x_2)^{\nu}.
\]
Shape $(1,1)$ in component $\nu$, standard tableau
$T = \begin{array}{|c|}\hline 1\\\hline 2\\\hline\end{array}$:
\[
 F'_\nu = (x_1x_2)^{\nu}(x_1^{r} - x_2^{r}).
\]

\paragraph{Irreducible components indexed by a pair of components
$0\le \nu < \mu \le r-1$, each with a single box (dimension 2 each,
multiplicity 2 in the regular representation).} The two standard
tableaux are
\[
  T_a = \Big(\nu:\begin{array}{|c|}\hline 1\\\hline\end{array}
    \ ,\ \mu:\begin{array}{|c|}\hline 2\\\hline\end{array}\Big),
  \qquad
  T_b = \Big(\nu:\begin{array}{|c|}\hline 2\\\hline\end{array}
    \ ,\ \mu:\begin{array}{|c|}\hline 1\\\hline\end{array}\Big),
\]
(box $1$ in component $\nu$ or $\mu$ respectively), giving
\[
 \{x_1^{\nu}x_2^{\mu},\; x_1^{\mu}x_2^{\nu}\}
 \qquad\text{and}\qquad
 \{x_1^{r+\nu}x_2^{\mu},\; x_1^{\mu}x_2^{r+\nu}\}.
\]

\begin{proposition}\label{prop:Gr1n-basis}
The polynomials above form a basis of $\Lambda_0 \cong \mathbb{C}[G(r,1,2)]$
as a vector space, of total dimension
\[
 \underbrace{r}_{F_\nu} + \underbrace{r}_{F'_\nu}
 + \underbrace{4\binom{r}{2}}_{\text{pairs}}
 = 2r + 2r(r-1) = 2r^2 = |G(r,1,2)|.
\]
\end{proposition}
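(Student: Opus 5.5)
The plan is to reduce everything to an explicit monomial basis of $\Lambda_0 = \mathcal{O}_X/J_+$ and then check that, modulo $J_+$, the listed polynomials are nonzero scalar multiples of distinct basis monomials. For $W=G(r,1,2)$ we have $d=r$. The ideal $J_+$ is generated by $e_1 = x_1^r + x_2^r$ and $e_2 = x_1^r x_2^r$. Since $\Lambda_0 \cong \mathbb{C}[G(r,1,2)]$ (Subsection~\ref{subsec:specht}), we know $\dim_{\mathbb{C}}\Lambda_0 = 2r^2$, so it suffices to exhibit $2r^2$ polynomials forming a basis.

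\emph{Step 1 (monomial basis).} In $\Lambda_0$ we have $x_1^r \equiv -x_2^r$. Hence $x_2^{2r} = x_2^r\cdot x_2^r \equiv -x_1^r x_2^r \equiv 0$. Repeatedly replacing $x_1^r$ by $-x_2^r$ and then killing powers $x_2^{b}$ with $b \ge 2r$ shows that the monomials
\[
  x_1^a x_2^b, \qquad 0 \le a < r,\ 0 \le b < 2r,
\]
span $\Lambda_0$. There are exactly $2r^2$ of them, and $\dim\Lambda_0 = 2r^2$, so they form a basis. This avoids any explicit Gröbner-basis verification.

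\emph{Step 2 (reduction of the listed polynomials).} Reduce each polynomial modulo $J_+$ into this basis.
\begin{itemize}
  \item $F_\nu = x_1^\nu x_2^\nu$ is already a basis monomial.
  \item $F'_\nu = (x_1x_2)^\nu(x_1^r - x_2^r) \equiv -2\,x_1^\nu x_2^{r+\nu}$.
  \item For $\nu<\mu$, the elements $x_1^\nu x_2^\mu$, $x_1^\mu x_2^\nu$ and $x_1^\mu x_2^{r+\nu}$ are basis monomials.
  \item $x_1^{r+\nu}x_2^\mu \equiv -x_1^\nu x_2^{r+\mu}$.
\end{itemize}
Thus every listed polynomial is congruent to a nonzero scalar multiple of a basis monomial.

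\emph{Step 3 (bijection).} The monomials $x_1^a x_2^b$ with $a,b<r$ are obtained once each. The diagonal ones $a=b$ come from the $F_\nu$, and the off-diagonal ones $a\ne b$ come from the pairs $\{x_1^\nu x_2^\mu, x_1^\mu x_2^\nu\}$. The monomials $x_1^a x_2^{r+c}$ with $a,c<r$ are likewise obtained once each:
\begin{itemize}
  \item $a=c=\nu$ comes from $F'_\nu$;
  \item $(a,c)=(\nu,\mu)$ with $\nu<\mu$ comes from $x_1^{r+\nu}x_2^\mu$;
  \item $(a,c)=(\mu,\nu)$ comes from $x_1^\mu x_2^{r+\nu}$.
\end{itemize}
Hence the listed family is, up to nonzero scalars, a permutation of the monomial basis of Step 1, and is therefore a basis. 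The count $r + r + 4\binom{r}{2} = 2r^2$ follows automatically.

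The only point that needs care is Step 1: the spanning argument must be watertight, in particular the vanishing $x_2^{2r}\equiv 0$. The dimension input comes from $\Lambda_0\cong\mathbb{C}[W]$, which is taken as known. The remaining steps are bookkeeping, where one must check that no two listed polynomials reduce to the same monomial.
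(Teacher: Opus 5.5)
Your proof is correct, and it takes a genuinely different route from the paper's.

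The paper never computes in $\Lambda_0$. It enumerates $\mathcal{P}_{r,2}$ (a row, a column, or two isolated boxes in distinct components), and notes that $\mathrm{STab}(\lambda)_d=\mathrm{STab}(\lambda)$ because $d=r$. It then identifies the displayed list with the whole family $\{F^0_{S,T}\}$: one polynomial per one-dimensional shape, four per pair shape. Finally it imports Theorem~\ref{thm:2.3}(iii) together with $\dim_{\mathbb{C}}\Lambda_0=2r^2$.

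You use only the relations $x_1^r\equiv -x_2^r$ and $x_2^{2r}\equiv 0$ to obtain the monomial basis $x_1^ax_2^b$ ($0\le a<r$, $0\le b<2r$). Your reductions $F'_\nu\equiv -2x_1^\nu x_2^{r+\nu}$ and $x_1^{r+\nu}x_2^\mu\equiv -x_1^\nu x_2^{r+\mu}$ are right, and so is the bijection of your Step~3. So the list is a permutation of that basis, rescaled by the nonzero scalars $1,-1,-2$.

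What your route buys:
\begin{itemize}
\item It is elementary and independent both of Theorem~\ref{thm:2.3}(iii) and of the tableau bookkeeping. For instance, you never need that the reading word of $S=T_b$ assigns index $1$ to the box in component $\nu$, which is what produces the exponent $r+\nu$. It therefore doubles as an independent check that the paper's list is right.
\item It sidesteps the tacit step in the paper's Step~5, namely that a free basis of $\mathcal{O}_X$ over the invariants reduces modulo $J_+$ to a basis of $\Lambda_0$.
\item Your one external input, $\dim\Lambda_0=2r^2$, can also be made self-contained. The set $\{x_1^r+x_2^r,\,x_2^{2r}\}$ generates $J_+$, and it is a Gr\"obner basis for the lexicographic order with $x_1>x_2$, since the leading terms $x_1^r$ and $x_2^{2r}$ are coprime. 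Its standard monomials are exactly your basis, which gives spanning and independence at once.
\end{itemize}

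What the paper's route provides in exchange is the identification of these particular polynomials as the higher Specht polynomials, organized by $\lambda$ into groups of $f_\lambda^2$ elements. That is the information actually used when they serve as generators in Theorem~\ref{thm:main-general}(iii), and your computation, by design, says nothing about it.
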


\begin{proof}
\emph{Step 1 -- Enumeration of $\mathcal{P}_{r,2}$.} An $r$-tuple
$\lambda=(\lambda^1,\dots,\lambda^r)$ of Young diagrams of total size
$2$ is of one of three types: (a) a single component $\nu$ carries
shape $(2)$, the rest empty ($r$ choices of $\nu$); (b) a single
component $\nu$ carries shape $(1,1)$ ($r$ choices); (c) two
\emph{distinct} components $\nu < \mu$ each carry shape $(1)$
($\binom{r}{2}$ choices). These exhaust $\mathcal{P}_{r,2}$, since the
only partitions of $2$ are $(2)$ and $(1,1)$, and $2=2$ or $2=1+1$ are
the only ways to split the total size across components.

\emph{Step 2 -- Cases (a)--(b).} The shapes $(2)$ and $(1,1)$ each
admit exactly one standard tableau, so $f_\lambda = 1$. Since $p=1$
gives $d=r$, the condition ``$1 \in T^\nu$ for some $\nu \le d=r$''
defining $\mathrm{STab}(\lambda)_d$ is automatic (there are only $r$
components), so $\mathrm{STab}(\lambda)_d = \mathrm{STab}(\lambda)$,
reduced to the unique tableau. This yields exactly \emph{one} higher
Specht polynomial $F^0_{S,T}$ per such $\lambda$ -- namely $F_\nu$,
resp.\ $F'_\nu$, as computed explicitly above from the construction of
Subsection~\ref{subsec:specht}.

\emph{Step 3 -- Case (c).} The two-isolated-boxes shape admits $2$
standard tableaux $T_a,T_b$ (according to whether $1$ occupies
component $\nu$ or $\mu$), so $f_\lambda = 2$, consistent with the
multinomial formula $\binom{2}{1,1}\cdot 1\cdot 1 = 2$. Again
$\mathrm{STab}(\lambda)_d = \mathrm{STab}(\lambda) = \{T_a,T_b\}$
without restriction. This yields $S \in \{T_a,T_b\}$,
$T \in \{T_a,T_b\}$, i.e.\ \emph{four} polynomials $F^0_{S,T}$ per such
$\lambda$ -- these are the two pairs
$\{x_1^\nu x_2^\mu, x_1^\mu x_2^\nu\}$ and
$\{x_1^{r+\nu}x_2^\mu, x_1^\mu x_2^{r+\nu}\}$ displayed above.

\emph{Step 4 -- Total count.}
\[
  \underbrace{r \cdot 1}_{(a)} + \underbrace{r \cdot 1}_{(b)}
  + \underbrace{\binom{r}{2}\cdot 4}_{(c)}
  = 2r + 2r(r-1) = 2r^2,
\]
which also confirms $\sum_\lambda f_\lambda^2 = |G(r,1,2)|$, as it
must (Theorem~\ref{thm:2.3}(ii)).

\emph{Step 5 -- Conclusion via Theorem~\ref{thm:2.3}.}
Theorem~\ref{thm:2.3}(iii), already proved in general in
Subsection~\ref{subsec:specht}, states that the family $\{F^l_{S,T}\}$ -- indexed over
\emph{all} $\lambda \in \mathcal{P}_{r,n}$,
$S \in \mathrm{STab}(\lambda)_d$, $T \in \mathrm{STab}(\lambda)_{db(\lambda)}$,
$l = 0,\dots,e(\lambda)-1$ -- forms a basis of the
$\mathbb{C}[x_1,\dots,x_n]^{G(r,p,n)}$-module
$\mathbb{C}[x_1,\dots,x_n]$, these elements being by construction
(Subsection~\ref{subsec:specht}) already viewed inside $\Lambda_0 = \mathcal{O}_X/J_+$.
Steps 1--3 show our list enumerates \emph{exactly} this family for
$(r,p,n)=(r,1,2)$ (where $e(\lambda)=1$ for every $\lambda$ since
$p=1$, so only $l=0$ occurs) -- with neither omission nor repetition.
Since $\Lambda_0 \cong \mathbb{C}[G(r,1,2)]$ as a vector space of
dimension $|G(r,1,2)| = 2r^2$ (Subsection~\ref{subsec:specht}), and our list has
exactly $2r^2$ elements, all drawn from the basis guaranteed by
Theorem~\ref{thm:2.3}(iii), it \emph{is} a basis of $\Lambda_0$.
\end{proof}

\begin{example}[$r=2$: the Coxeter group $B_2$]
Here $|G(2,1,2)|=8$. The basis is:
\[
 F_0 = 1,\quad F_1 = x_1x_2, \qquad
 F'_0 = x_1^2-x_2^2,\quad F'_1 = x_1x_2(x_1^2-x_2^2),
\]
together with the single pair $(\nu,\mu)=(0,1)$:
\[
 \{x_1,\, x_2\} \qquad \{x_1^2x_2,\, x_1x_2^2\}.
\]
Total: $2+2+4=8$ basis elements, matching $|G(2,1,2)|$.
\end{example}

\subsection{Central primitive idempotents for $G(r,1,2)$}\label{subsec:Gr1n-idem}

From the characters computed in Subsection~\ref{subsec:Gr1n}, we
obtain the central primitive idempotents of $G(r,1,2)$.

\paragraph{$G(r,1,2)$.}
Write elements as $(a,b;\sigma)$, $a,b \in \mathbb{Z}/r\mathbb{Z}$,
$\sigma \in \{e,\mathrm{swap}\}$, and $\xi = e^{2i\pi/r}$. Recall the
characters computed above:
\[
  \chi_{F_\nu}(a,b;\sigma) = \xi^{\nu(a+b)} \ (\forall \sigma),
  \qquad
  \chi_{F'_\nu}(a,b;\sigma) = \xi^{\nu(a+b)} \cdot
    \begin{cases} +1 & \sigma = e \\ -1 & \sigma = \mathrm{swap}, \end{cases}
\]
\[
  \chi_{(\nu,\mu)}(a,b;e) = \xi^{a\nu+b\mu}+\xi^{a\mu+b\nu},
  \qquad
  \chi_{(\nu,\mu)}(a,b;\mathrm{swap}) = 0.
\]
The corresponding central primitive idempotents are
\[
  r_{F_\nu} = \frac{1}{2r^2} \sum_{a,b=0}^{r-1} \xi^{-\nu(a+b)}
    \big[(a,b;e)+(a,b;\mathrm{swap})\big],
\]
\[
  r_{F'_\nu} = \frac{1}{2r^2} \sum_{a,b=0}^{r-1} \xi^{-\nu(a+b)}
    \big[(a,b;e)-(a,b;\mathrm{swap})\big],
\]
\[
  r_{(\nu,\mu)} = \frac{1}{r^2} \sum_{a,b=0}^{r-1}
    \big(\xi^{-a\nu-b\mu}+\xi^{-a\mu-b\nu}\big)(a,b;e), \qquad 0 \le \nu < \mu \le r-1.
\]

\begin{example}[$r=2$]
Here $\xi=-1$, and $r_{F_0} = \tfrac18\sum_{a,b\in\{0,1\}}\big[(a,b;e)+(a,b;\mathrm{swap})\big]$
is $\tfrac18$ times the sum of all $8$ group elements, while
$r_{F_1} = \tfrac18\sum_{a,b}(-1)^{a+b}\big[(a,b;e)+(a,b;\mathrm{swap})\big]$,
and $r_{(0,1)} = \tfrac14\sum_{a,b}\big[(-1)^{b}+(-1)^{a}\big](a,b;e)$,
matching the $B_2$ character table.
\end{example}

\begin{remark}
As a sanity check applicable to all three examples above ($D_2$,
$D_3$, and $G(r,1,2)$), summing all
computed central idempotents for a given group always returns the
identity element $\mathrm{id}$, and $r_\lambda r_\mu = 0$ for
$\lambda \ne \mu$ -- both are immediate from column orthogonality of
the character table.
\end{remark}

\section{Concluding remarks and perspectives}\label{sec:conclusion}

We have given a uniform construction of the simple
$\widetilde{\mathcal{D}}_Y$-module summands of the polynomial ring
localized at the discriminant, for the entire family of imprimitive
complex reflection groups $G(r,p,n)$, resting on two ingredients --
the Jacobian lemma of Subsection~\ref{subsec:jacobian} and the
double-centralizer argument of Subsection~\ref{subsec:simple-components} -- that
replace the ad hoc, group-specific constructions used in earlier
treatments of special cases (symmetric groups, and our own starting
point $W(D_n)$~\cite{NonkaneLawson2021}). Specializing this single argument recovers, and
considerably shortens, the decomposition theorems for the real
reflection groups $W(D_n)$ and $W(B_n)$, and a Galois descent
equivalence of categories (Theorem~\ref{thm:descent}) provides a
second, generator-free description of the same summands. We close by
indicating some directions this leaves open.

\paragraph{Exceptional complex reflection groups.} The Shephard--Todd
classification consists of the infinite family $G(r,p,n)$ together
with $34$ exceptional groups. The double-centralizer argument of
Subsection~\ref{subsec:simple-components} is entirely general and applies
verbatim to any complex reflection group; what is specific to
$G(r,p,n)$ is the Jacobian lemma (Lemma~\ref{lem:jacobian-general})
and the combinatorial substitute for higher Specht polynomials, both
tied to the monomial wreath-product structure. Obtaining analogous
discriminant formulas and explicit bases for the exceptional groups
would need to be developed case by case.

\paragraph{Connection to rational Cherednik algebras.} The ring
$\widetilde{\mathcal{D}}_Y$ studied here is closely related to the
spherical subalgebra of the rational Cherednik algebra $H_c(W)$ at
$t \ne 0$, $c=0$: at $c=0$ the Dunkl operators of $H_c(W)$ degenerate
to ordinary partial derivatives, and the spherical subalgebra
$eH_0(W)e$ recovers (an algebraic avatar of) the invariant
differential operator ring $D(V)^W$, closely related to
$\widetilde{\mathcal{D}}_Y$. Under this identification, our
decomposition theorem should correspond to the decomposition of the
polynomial representation of $H_0(W)$ into simple modules in category
$\mathcal{O}$ at $c=0$. Making this correspondence precise, and
comparing our explicit generators to the standard modules
$\Delta_0(\lambda)$, would connect the present combinatorial
construction to the representation theory of Cherednik algebras --
and might in turn suggest how our construction should deform as $c$
moves away from $0$.

\paragraph{Physical motivation revisited.} The original version of
this circle of ideas, for $W(D_n)$, was motivated by the rational
quantum Olshanetsky--Perelomov system; the present, purely algebraic
treatment does not use this structure. Reintroducing Dunkl--Opdam
operators for the whole family $G(r,p,n)$ -- and, in particular,
working out the associated integrable system for $W(B_n)$, now
treated on equal footing with $W(D_n)$ in
Section~\ref{sec:other-cases} -- is a natural next step.

\paragraph{Completing the remaining explicit examples.} We gave fully
explicit generators for every simple summand in the cases $D_2$,
$D_3$, and $G(r,1,2)$; extending this to $D_4$, or to a general
closed-form recipe for the generators attached to shapes mixing both
components of $\lambda$ -- rather than the case-by-case constructions
(the reflection representation and the gradient of the discriminant)
used in Subsection~\ref{subsec:D3-example} -- remains open.

\paragraph{A categorical refinement.} Corollary~\ref{cor:characterization}
identifies the summands of $\widetilde{\mathcal{O}}_X$ with a
subcategory of $\widetilde{\mathcal{D}}_Y$-modules trivialized by the
discriminant cover (Theorem~\ref{thm:descent}); it would be
interesting to know whether the equivalence of
Theorem~\ref{thm:descent} extends to a statement about \emph{all}
finitely generated $\widetilde{\mathcal{D}}_Y$-modules, in the spirit
of the Riemann--Hilbert correspondence for the discriminant complement.

\section*{Acknowledgments}

We thank Professor Rikard B\"ogvad for introducing us to this theory
and for instructive comments during the writing of this paper.

\end{document}